\documentclass{article}
\usepackage[a4paper, margin=3cm, bottom=4cm]{geometry}
\usepackage[utf8]{inputenc}
\usepackage[T1]{fontenc}
\usepackage{lmodern}

\usepackage[
    backend=biber,
    style=alphabetic, maxalphanames=5, 
    maxbibnames=100, maxsortnames=100, 
    sorting=nyt,
    natbib=true,
    url=false,doi=false,isbn=false,eprint=false
]{biblatex}
\usepackage[pdftex]{graphicx} 
\usepackage{subcaption}
\graphicspath{ {./images/} }
\usepackage{enumitem}
\usepackage[normalem]{ulem}
\usepackage[toc,page]{appendix}

\usepackage[bookmarks, bookmarksnumbered,
			colorlinks=true,
            linkcolor = blue,
            urlcolor  = blue,
            citecolor = teal,
            hypertexnames=false  
]{hyperref}

\newcommand\fnsurl[1]{{\footnotesize\url{#1}}}

\usepackage{myquicksetup}
\numberwithin{definition}{section}
\numberwithin{theorem}{section}
\numberwithin{corollary}{section}
\numberwithin{proposition}{section}
\numberwithin{lemma}{section}
\numberwithin{claim}{section}
\numberwithin{fact}{section}
\numberwithin{remark}{section}
\numberwithin{example}{section}
\numberwithin{equation}{section}
\mathtoolsset{showonlyrefs} 

\usepackage{mylivemacros}

\usepackage{sinkhorn_small} 

\usepackage{subfiles} 

\newif\ifextended  
\extendedtrue

\title{Almost-sharp $O(k^{-1} \log k)$ convergence rate for the Sinkhorn algorithm in the asymptotically scalable case}
\date{\today}

\author{Guillaume Wang\footnote{Courant Institute School, New York
University~ \texttt{guillaume.wang@nyu.edu}}}

\hypersetup{
    pdftitle={Almost-sharp O(k-1 log k) convergence rate for the Sinkhorn algorithm in the asymptotically scalable case},
    pdfauthor={Guillaume Wang},
    pdfkeywords={Sinkhorn algorithm, matrix scaling, asymptotically scalable}
}

\begin{document}
\maketitle


\begin{abstract}
    We prove that the Sinkhorn algorithm converges at a rate of $O(k^{-1} \log k)$ in $\ell_1$-norm marginal error, in the asymptotically scalable case. This almost closes the gap between the lower bound $\Omega(k^{-1})$ \cite{qu2025sinkhorn} and the previously best known upper bound $O(k^{-1/2})$ \cite{leger2021gradient},
    and generalizes the analysis for the positive case by
    \cite{dvurechensky2018computational}.
\end{abstract}

\section{Introduction} \label{sec:intro}

The purpose of this paper is to prove that the Sinkhorn algorithm converges at a rate of $O(k^{-1} \log k)$ in $\ell_1$-norm marginal error, in the asymptotically scalable case.

We start by introducing the objects studied throughout this paper, in the next section.
The reader familiar with the Sinkhorn algorithm may first skip ahead to \autoref{subsec:intro:sota_contrib}.

\subsection{Problem setup} \label{subsec:intro:setup}

Let $\mu \in \Delta_m$ and $\nu \in \Delta_n$, where $\Delta_m$ denotes the probability simplex in dimension $m$, such that $\mu_{\min} = \min_i \mu_i, \nu_{\min} = \min_j \nu_j > 0$.
Let $C \in (\RR \cup \{\infty\})^{m \times n}$ and $\EEE = \left\{ (i,j);~ C_{ij} < \infty \right\}$, and suppose that the bipartite graph $(\{1 \dots m \} \sqcup \{1 \dots n\}, \EEE)$ 
is connected.
Let $\tau>0$ and let the function $\Psi: \RR^m \times \RR^n \to \RR$ be defined by
\begin{equation}
    \Psi(f, g) =
    \tau \log \sum_{ij} e^{\left[ -C_{ij} + f_i + g_j \right]/\tau} \mu_i \nu_j
    - \mu^\top f - \nu^\top g
\end{equation}
with the convention that $\exp(-\infty) = 0$, i.e., the sum can equivalently be taken over the $(i,j) \in \EEE$.

For a given initial pair $(f^0, g^0)$, typically $(0,0)$, we call iterates of the Sinkhorn algorithm the sequence $(f^k, g^k)_{k \geq 0}$ defined by the update rule
\begin{align}
    &\text{for $k$ even,}~~~
    f^{k+1} = f[g^k]
    \quad \text{and} \quad
    g^{k+1} = g^k \\
    &\text{for $k$ odd,}~~~~
    f^{k+1} = f^k
    \quad\quad \text{and} \quad
    g^{k+1} = g[f^k]
\end{align}
where
\begin{equation}
    \forall g \in \RR^n,~
    f[g]_i = -\tau \log \sum_j\, e^{\left[ -C_{ij} + g_j \right]/\tau} \nu_j
    \qquad \text{and} \qquad
    \forall f \in \RR^m,~
    g[f]_j = -\tau \log \sum_i\, e^{\left[ -C_{ij} + f_i \right]/\tau} \mu_i,
\end{equation}
still with the convention that $\exp(-\infty) = 0$.
Note that 
$\forall g, f[g] \in \argmin \Psi(\cdot, g)$
and
$\forall f, g[f] \in \argmin \Psi(f, \cdot)$.
Also define
\begin{equation}
    \forall f, g,~~
    \pi[f,g]_{ij}
    = \frac{1}{Z(f,g)} e^{\left[ -C_{ij} + f_i + g_j \right]/\tau} \mu_i \nu_j
    \quad \text{where} \quad
    Z(f,g) = \sum_{i'j'} e^{\left[ -C_{i'j'} + f_{i'} + g_{j'} \right]/\tau} \mu_{i'} \nu_{j'}
\end{equation}
and set $\pi^k = \pi[f^k, g^k] \in \Delta_{m \times n}$ for all $k \geq 0$.
Note that by explicit computations,
\begin{equation}
    \forall g,~ Z(f[g], g) = 1
    \qquad \text{and} \qquad
    \forall f,~ Z(f, g[f]) = 1,
\end{equation}
so we have $Z(f^k, g^k) = 1$
for all $k \geq 1$ (but not for $k=0$ in general).

Denote by $\Hdiv{\pi}{\pi'} = \sum_I \pi_I \log \frac{\pi_I}{\pi'_I}$ the relative entropy between any discrete probability distributions.
For any $\pi \in \Delta_{m \times n}$, let
\begin{equation}
    (X_\sharp \pi)_i = \sum_j \pi_{ij}
    \qquad\text{and}\qquad
    (Y_\sharp \pi)_j = \sum_i \pi_{ij}.
\end{equation}
We identify probability distributions on $\EEE$ to probability distributions on $\{1 \dots m\} \times \{1 \dots n\}$ by setting
$\forall \pi \in \Delta_\EEE, \forall (i,j) \not\in \EEE, \pi_{ij} = 0$.
The optimality metric we use to measure the eventual convergence of the Sinkhorn algorithm is the $\ell_1$-norm error of the marginals,
as is standard in the literature; 
that is,
\begin{equation}
    E_k = \norm{X_\sharp \pi^k - \mu}_1 + \norm{Y_\sharp \pi^k - \nu}_1.
\end{equation}

\begin{remark}
    Since $f^1 = f[g^0]$ and $g^1=g^0$, the Sinkhorn iterates $(f^k, g^k)$ do not depend on $f^0$, except at the iteration $k=0$ of course. 
    The quantities $f^0, \pi^0, Z^0$ were introduced only for ease of presentation and do not play any role.
\end{remark}

\begin{remark}
    Almost all of the statements and derivations in this paper would hold without change, and all could be adapted easily, if $\Psi$ is replaced by $\tPsi(f, g) = \tau \sum_{ij} e^{[-C_{ij}+f_i+g_j]/\tau} \mu_i \nu_j - \tau - \mu^\top f - \nu^\top g$ everywhere.
\end{remark}

\begin{remark}
    The optimization problem $\min_{f,g} \Psi(f,g)$ is the convex dual of the entropy-regularized optimal transport (EOT) problem
    \begin{equation} 
        \min_{\pi \in \Delta_{\EEE}} \sum_{ij} C_{ij} \pi_{ij} + \tau \Hdiv{\pi}{\mu \otimes \nu}
        ~~~~\text{subject to}~~~~
        \begin{cases}
            X_\sharp \pi = \mu \\
            Y_\sharp \pi = \nu.
        \end{cases}
    \end{equation}
\end{remark}

\subsection{State of the art and contributions} \label{subsec:intro:sota_contrib}

Let $A \in \RR_+^{m \times n}$ be the matrix defined by $A_{ij} = e^{-C_{ij}/\tau} \mu_i \nu_j$.
We say that
\begin{itemize}
    \item $A$ is asymptotically $(\mu, \nu)$-scalable if $\inf_{f, g} \Psi(f,g) > -\infty$, or equivalently, if the EOT problem admits a feasible solution.
    \item $A$ is exactly $(\mu, \nu)$-scalable if $\Psi$ attains its infimum at some $(f^*, g^*)$ (which may not be unique), or equivalently, if the EOT problem admits a feasible
    solution that assigns a positive mass to all $(i,j) \in \EEE$.
    \item $A$ is positive if $\forall i,j, A_{ij}>0$, or equivalently, if $\forall i,j, C_{ij}<\infty$.
\end{itemize}
These definitions are consistent with the usual terminology in matrix scaling, see 
\cite[Theorems~4.1, 4.2]{idel2016review}.
Throughout this paper, we use ``scalable'' as an abbreviation for ``$(\mu, \nu)$-scalable''.
It is known that
\begin{equation}
    \text{$A$ is positive}
    ~~\implies~~
    \text{$A$ is exactly scalable}
    ~~\implies~~
    \text{$A$ is asymptotically scalable}.
\end{equation}

Several convergence analyses of the Sinkhorn algorithm are available for the case where $A$ is positive, in the literature on computational optimal transport.
The same is true for the case where $A$ is only assumed exactly scalable, in the literature on matrix scaling.
For both of these cases, four types of convergence bounds exist:
\begin{itemize}
    \item Global exponential convergence.
    See \cite{franklin1989scaling} for the case where $A$ is positive (the contraction constant therein is sharp) and \cite{qu2025sinkhorn} for the case where $A$ is exactly scalable.
    \item Local exponential convergence.
    See \cite{soules1991rate,knight2008sinkhorn,qu2025sinkhorn} for the case where $A$ is exactly scalable (the exponential rate in the last reference is sharp).
    \item Slow polynomial bound in $E_k \leq O(1/\sqrt{k})$.
    See \cite{altschuler2017near} for the case where $A$ is positive and \cite{leger2021gradient} for the case where $A$ is exactly scalable. See also \cite{kalantari2008complexity} for an earlier result with looser constants.
    \item Fast polynomial bound in $E_k \leq O(1/k)$. See \cite{dvurechensky2018computational} for the case where $A$ is positive
    and \cite{ghosal2025convergence} for the case where $A$ is exactly scalable.
    The latter case can also be treated by combining \cite{dvurechensky2018computational} and \cite{kalantari2008complexity}, and this yields a bound with more explicit constants; see \autoref{sec:warmup} for details.
\end{itemize}
We note that while the exponential convergence bounds may seem strictly stronger than the polynomial bounds,
the former also scale exponentially in $1/\tau$ for small $\tau$, instead of only polynomially for the latter.

In this paper, we focus on the case where $A$ is only asymptotically scalable (and not exactly scalable).
Let us summarize the known results for this case.
\begin{itemize}
    \item It is known since \cite{sinkhorn1967concerning} that we have the qualitative convergence $E_k = o(1)$.
    \item \cite[Proposition~3]{qu2025sinkhorn} shows that necessarily $E_k \geq \Omega(1/k)$. \cite[page~19]{soules1991rate} and \cite{achilles1993implications} give an explicit example where $E_k = \Theta(1/k)$.
    \item The only quantitative convergence upper bound currently available is $E_k \leq O(1/\sqrt{k})$, shown in \cite{leger2021gradient}.
    The same rate was attained independently in \cite{vanapeldoorn2020quantum}, building upon \cite{chakrabarty2021better}.
\end{itemize}
In this paper, we significantly reduce the gap between upper and lower bounds, by proving the almost-sharp convergence upper bound
$E_k \leq O(\log k/k)$.
Formally, we show the following.

\begin{theorem}
    Suppose that $A$ is asymptotically scalable.
    There exist constants $B_1, B_2>0$ dependent only on $\mu, \nu$, and $\EEE$ such that,
    if the Sinkhorn algorithm is initialized with $g^0 = 0$,
    then
    $E_k = \norm{X_\sharp \pi^k - \mu}_1 + \norm{Y_\sharp \pi^k - \nu}_1$
    is bounded as 
    \begin{equation}
        \forall k \geq B_1,~~
        E_k 
        \leq \frac{B_2}{k} \left( 1 + \mathrm{osc}(C)/\tau + \log k \right)
    \end{equation}
    where $\mathrm{osc}(C) = \max_{(i,j) \in \EEE} C_{ij} - \min_{ij} C_{ij}$.
\end{theorem}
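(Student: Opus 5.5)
The plan is to adapt the Dvurechensky et al.\ argument for the positive case. That argument combines two ingredients. The first is a per-step decrease of $\Psi$ by at least a constant times $E_k^2$, which comes from Pinsker's inequality. The second is an upper bound on $\Psi(f^k,g^k)-\Psi^*$ of order $R\,E_k$, where $R$ bounds the oscillation of the iterates and of a (near-)optimal dual pair. Combining them gives the recursion $\delta_{k}-\delta_{k+1}\ge c\,\delta_k^2/R^2$ for $\delta_k=\Psi(f^k,g^k)-\inf\Psi$, hence $\delta_k\lesssim R^2/k$. We then recover $E_k$ by summing the decreases over a window $[k/2,k]$ and using that $E_k$ is nonincreasing up to constants (or simply taking the best iterate), which gives $E_k\lesssim R/k$.

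The difficulty in the asymptotically scalable case is that no minimizer exists and the iterates may drift to infinity, so $R$ is not bounded. I would replace the optimum by a comparison point $(\bar f,\bar g)$ depending on a target accuracy $\varepsilon$. Take an $\varepsilon$-perturbed feasible plan: mix a feasible plan $\pi^\circ$, supported on $\EEE$ with marginals $(\mu,\nu)$, with an exactly scalable perturbation, for instance by modifying $\mu,\nu$ slightly, or by noting that restricting to the support of $\pi^\circ$ gives an exactly scalable subproblem. The resulting dual point should satisfy $\Psi(\bar f,\bar g)\le \inf\Psi+\varepsilon$ while having oscillation $O(\mathrm{osc}(C)+\tau\log(1/\varepsilon))$. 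The logarithm should arise because entries that must vanish in the limit are suppressed by factors $e^{-\Theta(\mathrm{osc}/\tau)}\cdot\varepsilon$. I would get this from the Kalantari--Khachiyan type bounds recalled in \autoref{sec:warmup}, applied to a slightly perturbed exactly scalable instance, with constants depending only on $\mu,\nu,\EEE$.

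With this in hand, the key steps are as follows.
\begin{enumerate}
\item Prove the descent inequality $\Psi^k-\Psi^{k+1}\ge \tfrac{\tau}{2}E_k^2$, up to constants, from Pinsker's inequality.
\item Show that the iterates $(f^k,g^k)$ started from $g^0=0$ have oscillation growing at most like $O(\mathrm{osc}(C)+\tau\log k)$. Here I would use the explicit soft-min formulas: each coordinate stays within $\mathrm{osc}(C)$ plus log-ratios of masses. One then bounds how small the mass $\pi^k_{ij}$ on the support of a feasible plan can get, using the monotone decrease of $\Psi$ and $\Psi^k\ge\inf\Psi$.
\item Use convexity to get $\Psi^k-\Psi(\bar f,\bar g)\le \langle \nabla\Psi^k,(f^k,g^k)-(\bar f,\bar g)\rangle\le \tau^{-1}\cdot\tau R\,E_k$.
\item Choose $\varepsilon\sim 1/k$ and run the recursion on the quantity $\Psi^k-\Psi(\bar f,\bar g)$ over the window. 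This yields $\delta\lesssim \tau R^2/k$ with $R/\tau\sim 1+\mathrm{osc}(C)/\tau+\log k$, and hence $E_k\lesssim (1+\mathrm{osc}(C)/\tau+\log k)/k$ once $k\ge B_1$.
\end{enumerate}

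The main obstacle is step 2, together with the matching construction of $(\bar f,\bar g)$: showing that the dual iterates only drift logarithmically in $k$, uniformly with constants depending on $\mu,\nu,\EEE$ alone. I would first try to bound $\max f^k-\min f^k$ via the decrease $\Psi^0-\Psi^k\le \Psi^0-\inf\Psi=O(\mathrm{osc}(C))$. A large oscillation forces some row block to carry mass with an exponentially large mismatch, and a combinatorial Hall-type argument over the support graph converts the bounded suboptimality into bounded oscillation. If that fails, I would instead bound the oscillation by $\tau\log k$ directly, using that $E_j\to0$ forces $\pi^j$ marginals to be within factor $2$ of $(\mu,\nu)$ after $B_1$ steps.
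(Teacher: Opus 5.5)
There is a genuine gap at exactly the point you flag as the main obstacle. Step~2 asks for $\norm{f^k}_{\var}\vee\norm{g^k}_{\var}=O(\mathrm{osc}(C)+\tau\log k)$ for the actual iterates, and neither of your suggested routes gives it.

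The first route cannot work. Bounded suboptimality does not bound the oscillation when no minimizer exists, and the iterates really do drift. In the example of \autoref{rk:fastasymp:soules}, $\mathrm{osc}(C)=0$ while $g^k_2-g^k_1=-\tau\log k$ for even $k$. So no $k$-independent bound can follow from $\Psi(f^k,g^k)-\inf\Psi\le V_1$.

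The second route is circular. Since $f^{k+1}-f^k=-\tau\log(X_\sharp\pi^k/\mu)$ (\autoref{lm:intro:fk+1-fk}), the drift after $k$ steps is controlled by $\tau\mu_{\min}^{-1}\sum_{j\le k}E_j$. Marginals within a factor $2$ give only $O(\tau k)$, and the known $E_j=O(j^{-1/2})$ gives $O(\tau\sqrt k)$. Getting $O(\tau\log k)$ needs $E_j=O(1/j)$, which is the theorem itself; a bootstrap only yields polynomial growth.

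Granting step~2 and a good comparison point, your fixed-comparison-point recursion would go through. The paper shows step~2 is unnecessary. Run Sinkhorn also from a comparison point $\hat g$, producing iterates $(\hat f^k,\hat g^k)$. By non-expansiveness of $f[\cdot],g[\cdot]$ in $\norm{\cdot}_{\var}$ (\autoref{lm:intro:varseminorm_nonexpansive}), $\norm{f^k-\hat f^k}_{\var}\vee\norm{g^k-\hat g^k}_{\var}\le\norm{\hat g-g^0}_{\var}$ for all $k\ge1$. Also $\Psi(\hat f^k,\hat g^k)\le\Psi(f[\hat g],\hat g)$ by monotonicity of alternating minimization. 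Running your convexity-plus-Pinsker recursion against this \emph{moving} comparison point gives $\Psi(f^k,g^k)-\Psi(f[\hat g],\hat g)\le 2\norm{\hat g-g^0}_{\var}^2/(\tau(k-1))$. Hence only the oscillation of the static point $\hat g$ matters; this is the rate function $Q$.

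That static question is your second gap: the construction of $(\bar f,\bar g)$ you sketch does not give oscillation $O(\mathrm{osc}(C)+\tau\log(1/\eps))$.

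Perturbing $(\mu,\nu)$ into an exactly scalable $(\mu',\nu')$ and invoking the Kalantari--Khachiyan bound yields oscillation $\lesssim(K-\theta)/\Delta'$. But $\Delta'$ is at most the size of the perturbation. Any pair $I,J$ with $\mu(I)=\nu(J)$ that obstructs exact scalability must be broken, and then $|\mu'(I)-\nu'(J)|\le\norm{\mu'-\mu}_1+\norm{\nu'-\nu}_1$. So the bound is polynomial in $1/\eps$ and the rate is lost.

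Restricting to the support of a feasible plan gives no control over the mass on the remaining edges of $\EEE$. It also does not fix the relative offsets between the disconnected pieces.

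The paper (\autoref{prop:fastasymp:good_approx_minimizer}) uses the Dulmage--Mendelsohn decomposition instead:
\begin{itemize}
\item Take Kalantari--Khachiyan minimizers $(f^{*p},g^{*p})$ on each exactly scalable diagonal block. Their oscillation is at most $(K-\ul\theta)/\Delta$, with the original $\Delta$.
\item Shift block $p$ by $t_p=\rho\ell_p$. Here $\ell_p$ is the longest path ending at $p$ in the acyclic interaction DAG, and $\rho=\tau\log(\tau/\eps)+(K-\ul\theta)(1+2/\Delta)$. Every cross-block edge $p\to q$ is then damped by $e^{-\rho/\tau}$.
\item The Jensen argument of \autoref{lm:apx_delayedpf_GAM:ub_Psi-infPsi}, against the block-diagonal feasible plan, bounds the suboptimality by $\eps$.
\item The resulting point satisfies $\norm{\hat g_\eps}_{\var}\le\frac{\tau\ell}{2}\log(\tau/\eps)+(K-\ul\theta)\bigl(\frac{\ell}{2}+\frac{1+\ell}{\Delta}\bigr)$.
\end{itemize}
Acyclicity is exactly what makes such consistent offsets possible.
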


See \autoref{thm:fastasymp:fastasymp} for the full statement of our result, including explicit constants.

\begin{remark}
    Whether or not $A$ is asymptotically $(\mu, \nu)$-scalable, or exactly $(\mu, \nu)$-scalable, or positive, only depends on $\mu, \nu$, and $\EEE$ rather than on the specific coefficients of $A$ (or of $C$).
\end{remark}

\begin{remark}
    In the case where $A$ is not even asymptotically scalable, the Sinkhorn algorithm is still well-defined, and its convergence behavior is detailed in \cite{baradat2024convergence}.
    In particular, for the purpose of quantifying the convergence rate, the proof of Proposition~5.3 therein allows to reduce the study to the asymptotically scalable case.
\end{remark}

\begin{remark}
    The Sinkhorn algorithm is well-defined as soon as the bipartite graph $(\{1 \dots m \} \sqcup \{1 \dots n\}, \EEE)$ has no isolated vertex.
    Throughout this paper we make the additional assumption that this graph is connected, but this is without loss of generality.
    Indeed, if it is not connected, let $I_1 \sqcup J_1, ..., I_P \sqcup J_P$ denote its connected components
    and let
    $\mu^{(p)} = (\mu_i)_{i \in I_p}$,
    $\nu^{(p)} = (\nu_j)_{j \in J_p}$, and
    $C^{(p)} = (C_{ij})_{i \in I_p, j \in J_p}$.
    Then running the Sinkhorn algorithm on $(\mu, \nu, C)$ is equivalent to running it on each of the $(\mu^{(p)}, \nu^{(p)}, C^{(p)})$ independently, in the sense that (the relevant components of) the iterates coincide.
    Moreover, requiring $A$ to be asymptotically scalable implies that 
    for each $p$, $\sum_{I_p} \mu_i = \sum_{J_p} \nu_j$ and
    $A^{(p)} = (A_{ij})_{i \in I_p, j \in J_p}$ is also $(\restr{\mu}{I_p}, \restr{\nu}{J_p})$-asymptotically scalable.
\end{remark}

\subsection{Preliminaries} \label{subsec:intro:prelim}

We start by recalling some known facts about the Sinkhorn algorithm.

\begin{lemma} \label{lm:intro:marginals_match}
    For any $k \geq 0$, if $k$ is even then
    $X_\sharp \pi^{k+1} = \mu$,
    and if $k$ is odd then
    $Y_\sharp \pi^{k+1} = \nu$.
\end{lemma}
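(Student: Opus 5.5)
This is a direct computation from the definitions. The plan is to treat the case $k$ even, where $f^{k+1} = f[g^k]$ and $g^{k+1} = g^k$. The case $k$ odd follows by the symmetric argument, exchanging the roles of $(i, f, \mu)$ and $(j, g, \nu)$.

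First, I will use the identity $Z(f[g], g) = 1$ recalled above. This gives $Z(f^{k+1}, g^{k+1}) = 1$, so
\begin{equation}
    \pi^{k+1}_{ij} = e^{\left[ -C_{ij} + f[g^k]_i + g^k_j \right]/\tau} \mu_i \nu_j .
\end{equation}
If one prefers not to rely on the recalled identity, it drops out of the same computation as the next step, after summing over $i$ and using $\sum_i \mu_i = 1$.

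Next, I sum over $j$ and factor out the terms depending only on $i$. This gives
\begin{equation}
    (X_\sharp \pi^{k+1})_i = \mu_i \, e^{f[g^k]_i/\tau} \sum_j e^{\left[ -C_{ij} + g^k_j \right]/\tau} \nu_j .
\end{equation}
By the definition of $f[g]$, we have $e^{f[g^k]_i/\tau} = \bigl( \sum_j e^{[-C_{ij} + g^k_j]/\tau} \nu_j \bigr)^{-1}$, so the product collapses to $\mu_i$.

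The only point needing care is that $f[g^k]_i$ is finite, that is, the inner sum is positive. This holds because every $i$ has at least one $j$ with $(i,j) \in \EEE$: the bipartite graph is connected, so it has no isolated vertices. With that checked, I expect no real obstacle in this proof.
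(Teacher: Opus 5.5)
Your proposal is correct and follows essentially the same route as the paper: a direct computation that writes $\pi^{k+1}_{ij} = e^{[-C_{ij}+f[g^k]_i+g^k_j]/\tau}\mu_i\nu_j$ using $Z(f^{k+1},g^{k+1})=1$, sums over $j$, and lets the normalization in the definition of $f[g^k]_i$ cancel, with the odd case handled by symmetry. Your explicit check that $f[g^k]_i$ is finite (no isolated vertices) is a harmless addition that the paper leaves implicit.
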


\begin{proof}
    Let $k \geq 0$ even.
    We have by definition
    \begin{equation}
        (X_\sharp \pi^{k+1})_i
        = \sum_j e^{[-C_{ij} + f^{k+1}_i + g^{k+1}_j]/\tau} \mu_i \nu_j
        = \sum_j e^{[-C_{ij} + g^k_j]/\tau} \frac{1}{\sum_{j'} e^{[-C_{ij'} + g^k_{j'}]/\tau} \nu_{j'}} \mu_i \nu_j
        = \mu_i.
    \end{equation}
    The statement for $k$ odd follow similarly.
\end{proof}

\begin{lemma} \label{lm:intro:fk+1-fk}
    For any $k \geq 1$, if $k$ is even then
    $\forall i, f^{k+1}_i 
    = f^k_i - \tau \log \frac{(X_\sharp \pi^k)_i}{\mu_i}$,
    and if $k$ is odd then
    $\forall j, g^{k+1}_j 
    = g^k_j - \tau \log \frac{(Y_\sharp \pi^k)_j}{\mu_j}$.
\end{lemma}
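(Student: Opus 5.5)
The plan is a direct computation from the definitions. The one ingredient beyond unfolding formulas is the normalization $Z(f^k,g^k)=1$, which holds for all $k \geq 1$. This is exactly why the statement requires $k \geq 1$: at $k=0$ the normalization $Z(f^0,g^0)$ need not equal $1$.

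Take $k \geq 1$ even. Since $k \geq 1$, we have $Z(f^k,g^k)=1$ (as noted in \autoref{subsec:intro:setup}), so
\begin{equation}
    (X_\sharp \pi^k)_i = \sum_j e^{[-C_{ij}+f^k_i+g^k_j]/\tau}\mu_i\nu_j
    = \mu_i\, e^{f^k_i/\tau} \sum_j e^{[-C_{ij}+g^k_j]/\tau}\nu_j .
\end{equation}
By the definition of $f[\cdot]$, the last sum equals $e^{-f[g^k]_i/\tau}$. Because $k$ is even, the update rule gives $f^{k+1}=f[g^k]$, so this sum is $e^{-f^{k+1}_i/\tau}$. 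Hence
\begin{equation}
    \frac{(X_\sharp \pi^k)_i}{\mu_i} = e^{(f^k_i - f^{k+1}_i)/\tau}.
\end{equation}
Taking $\tau\log$ of both sides and rearranging gives $f^{k+1}_i = f^k_i - \tau\log\frac{(X_\sharp\pi^k)_i}{\mu_i}$. Note that $(X_\sharp\pi^k)_i>0$ because row $i$ has at least one finite-cost entry, by connectedness of the graph $(\{1 \dots m \} \sqcup \{1 \dots n\}, \EEE)$. So the logarithm is well defined.

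The odd case is symmetric. For $k$ odd we have $g^{k+1}=g[f^k]$, and the same computation with the roles of $(i,\mu,f)$ and $(j,\nu,g)$ exchanged gives $(Y_\sharp\pi^k)_j/\nu_j = e^{(g^k_j-g^{k+1}_j)/\tau}$. The denominator in the formula should therefore be $\nu_j$; the $\mu_j$ in the statement is presumably a typo.

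I do not expect a real obstacle here. The only point that needs care is the normalization $Z=1$, together with its restriction to $k\geq1$.
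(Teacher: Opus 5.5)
Your proposal is correct and is the same direct computation as the paper's proof: use $Z(f^k,g^k)=1$ for $k\geq 1$ to write $(X_\sharp\pi^k)_i/\mu_i = e^{f^k_i/\tau}\sum_j e^{[-C_{ij}+g^k_j]/\tau}\nu_j = e^{(f^k_i-f^{k+1}_i)/\tau}$, then take logarithms. You are also right that the denominator in the odd case should be $\nu_j$, not $\mu_j$.
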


\begin{proof}
    Let $k \geq 1$ even. We have
    \begin{equation}
        f^{k+1}_i - f^k_i
        = -\tau \log \sum_j e^{[-C_{ij} + f^k_i + g^k_j]/\tau} \nu_j
        = -\tau \log \frac{(X_\sharp \pi^k)_i}{\mu_i}
    \end{equation}
    since $\pi^k_{ij} = e^{[-C_{ij} + f^k_i + g^k_j]/\tau} \mu_i \nu_j$.
    The statement for $k$ odd follows similarly.
\end{proof}

\begin{lemma}[{\cite[Lemma~2]{altschuler2017near}}] \label{lm:intro:Psik-Psik+1}
    For any $k \geq 1$,
    \begin{equation}
        \Psi(f^k, g^k) - \Psi(f^{k+1}, g^{k+1})
        = \tau \Hdiv{\mu}{X_\sharp \pi^k} + \tau \Hdiv{\nu}{Y_\sharp \pi^k}.
    \end{equation}
\end{lemma}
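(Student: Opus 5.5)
Since the claimed identity is a one-step computation, the plan is to expand both values of $\Psi$ directly, using the normalization $Z(f^k,g^k)=1$ valid for $k\geq 1$ together with \autoref{lm:intro:fk+1-fk} and \autoref{lm:intro:marginals_match}. We treat $k$ even; the odd case is symmetric with the roles of $(f,\mu,X)$ and $(g,\nu,Y)$ exchanged.

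\textbf{Step 1: the log-partition terms vanish.} For $k\geq 1$ we have $Z(f^k,g^k)=1$, and also $Z(f^{k+1},g^{k+1})=1$. Since $\Psi(f,g)=\tau\log Z(f,g)-\mu^\top f-\nu^\top g$, both logarithmic terms are zero, and
\begin{equation}
    \Psi(f^k,g^k)-\Psi(f^{k+1},g^{k+1}) = \mu^\top (f^{k+1}-f^k) + \nu^\top(g^{k+1}-g^k).
\end{equation}

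\textbf{Step 2: evaluate the increments.} For $k$ even, $g^{k+1}=g^k$, so the $\nu$-term drops. By \autoref{lm:intro:fk+1-fk},
\begin{equation}
    f^{k+1}_i-f^k_i=-\tau\log\frac{(X_\sharp\pi^k)_i}{\mu_i}.
\end{equation}
Substituting gives $\mu^\top(f^{k+1}-f^k)=\tau\sum_i\mu_i\log\frac{\mu_i}{(X_\sharp\pi^k)_i}=\tau\Hdiv{\mu}{X_\sharp\pi^k}$.

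\textbf{Step 3: the other marginal term is zero.} Since $k\geq 1$ and $k$ is even, $k-1$ is odd. Applying \autoref{lm:intro:marginals_match} to the odd index $k-1$ gives $Y_\sharp\pi^{k}=\nu$, so $\Hdiv{\nu}{Y_\sharp\pi^k}=0$. The identity therefore holds as stated.

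For $k$ odd, the same three steps apply with the roles swapped: now $f^{k+1}=f^k$, the $g$-increment produces $\tau\Hdiv{\nu}{Y_\sharp\pi^k}$, and $X_\sharp\pi^k=\mu$ because $k-1$ is even. Note that \autoref{lm:intro:fk+1-fk}, as written, has $\mu_j$ in the denominator of the $g$-update; this is a typo for $\nu_j$, as the computation in its proof shows.

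The only real point of care is the role of $k\geq1$. It is what guarantees $Z^k=1$, which is needed to cancel the log terms in Step 1, and it is what makes the marginal from the previous half-step available in Step 3.
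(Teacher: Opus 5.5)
Your proposal is correct and follows the paper's proof essentially line for line. You use $Z(f^k,g^k)=Z(f^{k+1},g^{k+1})=1$ to reduce the difference to $\mu^\top(f^{k+1}-f^k)+\nu^\top(g^{k+1}-g^k)$, then \autoref{lm:intro:fk+1-fk} for the nonzero increment and \autoref{lm:intro:marginals_match} at index $k-1$ to make the other divergence vanish. Your Step 2 sign, giving $\tau\sum_i\mu_i\log\frac{\mu_i}{(X_\sharp\pi^k)_i}$, is right (the paper's displayed intermediate expression has the ratio inverted), and you are also right that the $\mu_j$ in \autoref{lm:intro:fk+1-fk} should be $\nu_j$.
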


\begin{proof}
    Since $Z(f^k, g^k) = 1$ then
    $\Psi(f^k, g^k) = -\mu^\top f^k - \nu^\top g^k$,
    and so
    $\Psi(f^k, g^k) - \Psi(f^{k+1}, g^{k+1})
    = -\mu^\top (f^k-f^{k+1}) - \nu^\top (g^k-g^{k+1})$.
    Suppose $k$ is even, then
    \begin{equation}
        \Psi(f^k, g^k) - \Psi(f^{k+1}, g^{k+1})
        = \mu^\top (f^{k+1}-f^k) + 0
        = \tau \sum_i \mu_i \log \frac{(X_\sharp \pi^k)_i}{\mu_i}
        = \tau \Hdiv{\mu}{X_\sharp \pi^k}
    \end{equation}
    by \autoref{lm:intro:fk+1-fk},
    and $\Hdiv{\nu}{Y_\sharp \pi^k} = 0$ by \autoref{lm:intro:marginals_match}.
    Hence the equality for $k$ even, and the case of $k$ odd follows similarly.
\end{proof}

\begin{lemma} \label{lm:intro:nablaPsi}
    The function $\Psi$ is (jointly) convex.
    Moreover for any $f \in \RR^m, g \in \RR^n$,
    \begin{equation}
        \forall i,~
        \nabla_{f_i} \Psi(f, g)
        = (X_\sharp \pi[f,g])_i - \mu_i
        \qquad \text{and} \qquad
        \forall j,~
        \nabla_{g_j} \Psi(f,g)
        = (Y_\sharp \pi[f,g])_j - \nu_j.
    \end{equation}
\end{lemma}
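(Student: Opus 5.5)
The plan is to write $\Psi$ as a composition of a convex function with an affine map, and to compute the gradient directly by differentiating the log-sum-exp term.

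For convexity, I will introduce the affine map $L:(f,g)\mapsto (f_i+g_j)_{(i,j)\in\EEE}\in\RR^{\EEE}$. Then
\begin{equation}
    \Psi(f,g)=\tau\,\mathrm{LSE}_{w}\big(L(f,g)/\tau\big)-\mu^\top f-\nu^\top g,
    \qquad
    \mathrm{LSE}_w(x)=\log\sum_{(i,j)\in\EEE} w_{ij}e^{x_{ij}},
\end{equation}
with positive weights $w_{ij}=e^{-C_{ij}/\tau}\mu_i\nu_j$. The weighted log-sum-exp is convex, since it equals $\log\sum e^{x_{ij}+\log w_{ij}}$, the standard log-sum-exp composed with a translation. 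This can be seen either by Hölder's inequality, or because its Hessian is the covariance matrix $\mathrm{diag}(p)-pp^\top\succeq 0$ of the softmax distribution $p$. A convex function composed with an affine map is convex, and adding linear terms preserves convexity, so $\Psi$ is jointly convex.

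For the gradient, I will use the chain rule. Differentiating $\tau\log Z(f,g)$ in $f_i$ gives
\begin{equation}
    \frac{\tau}{Z(f,g)}\sum_j \frac{1}{\tau}\,e^{[-C_{ij}+f_i+g_j]/\tau}\mu_i\nu_j=\sum_j \pi[f,g]_{ij}=(X_\sharp\pi[f,g])_i,
\end{equation}
and the linear term contributes $-\mu_i$. The derivative in $g_j$ is symmetric. Here the sums may be restricted to $\EEE$, which is nonempty, so $Z>0$ and $\Psi$ is smooth everywhere.

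Nothing here is a real obstacle. The only point needing care is to note that the $\infty$ entries of $C$ simply drop out of every sum, so all expressions are finite and differentiable.
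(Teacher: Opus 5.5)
Your proposal is correct and follows essentially the same route as the paper: convexity comes from the convexity of log-sum-exp (you just make the composition with the affine map $(f,g)\mapsto (f_i+g_j)_{(i,j)\in\EEE}$ explicit). The gradient comes from directly differentiating $\tau\log Z(f,g)$, which gives $\sum_j \pi[f,g]_{ij}-\mu_i$ exactly as in the paper's computation.
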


\begin{proof}
    By definition of $\Psi(f, g) = \tau \log \sum_{ij} e^{[-C_{ij}+f_i+g_j]/\tau} \mu_i \nu_j - \mu^\top f - \nu^\top g$ and convexity of the log-sum-exp function,
    $\Psi$ is convex.
    Moreover,
    \begin{equation}
        \forall i,~
        \nabla_{f_i} \Psi(f,g)
        = \frac{\sum_j e^{[-C_{ij}+f_i+g_j]/\tau} \mu_i \nu_j}{\sum_{i'j'} e^{[-C_{i'j'}+f_{i'}+g_{j'}]/\tau} \mu_{i'} \nu_{j'}}
        - \mu_i
        = (X_\sharp \pi[f,g])_i - \mu_i.
    \end{equation}
    The statement for $\nabla_{g_j} \Psi$ follows similarly.
\end{proof}

The following remarkable monotonicity property first appeared, to our knowledge, in \cite{nutz2021introduction}.
This reference attributes the result to \cite{leger2021gradient}, which however only explicitly shows the monotonicity of $\Hdiv{X_\sharp \pi^k}{\mu}$ and $\Hdiv{Y_\sharp \pi^k}{\nu}$.

\begin{lemma} \label{lm:intro:sqgrad_decr}
    The sequences
    $\left( \Hdiv{X_\sharp \pi^k}{\mu} \right)_{k \in 2\NN}$,
    $\left( \Hdiv{\mu}{X_\sharp \pi^k} \right)_{k \in 2\NN}$,
    $\left( \Hdiv{Y_\sharp \pi^k}{\nu} \right)_{k \in 2\NN+1}$,
    and 
    $\left( \Hdiv{\nu}{Y_\sharp \pi^k} \right)_{k \in 2\NN+1}$
    are non-increasing.
\end{lemma}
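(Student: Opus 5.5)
The plan is to give a data-processing (information-theoretic) argument for all four sequences at once. Each Sinkhorn half-step keeps the conditional law of one coordinate given the other and changes only one marginal. So the marginal not being fitted is the image of the fitted marginal under a fixed Markov kernel, and the data-processing inequality for relative entropy yields the monotonicity. Throughout, write $a^k = X_\sharp \pi^k$ and $b^k = Y_\sharp \pi^k$.

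\textbf{Step 1 (structure of the iterates).} Take $k \geq 1$ odd. By \autoref{lm:intro:marginals_match} we have $b^k = \nu$, and by \autoref{lm:intro:fk+1-fk} we have $\pi^{k+1}_{ij} = \pi^k_{ij}\, \mu_i / a^k_i$. Hence $\pi^k$ and $\pi^{k+1}$ share the kernel $P(j \mid i) = \pi^k_{ij}/a^k_i$, and their $X$-marginals are $a^k$ and $\mu$ respectively. Consequently
\begin{equation}
    \nu = b^k = a^k P
    \qquad\text{and}\qquad
    b^{k+1} = \mu P .
\end{equation}
Symmetrically, for $k$ even with $k \geq 2$, we have $a^k = \mu$, and $\pi^k, \pi^{k+1}$ share the kernel $R(i \mid j) = \pi^k_{ij}/b^k_j$. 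This gives $\mu = b^k R$ and $a^{k+1} = \nu R$.

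\textbf{Step 2 (data processing).} Relative entropy is non-increasing under a common Markov kernel, i.e.\ $\Hdiv{pK}{qK} \leq \Hdiv{p}{q}$, and this holds in both argument orders. Applying it with the kernels of Step 1, for each relevant $k$ gives
\begin{equation}
    \Hdiv{b^{k+1}}{\nu} \leq \Hdiv{\mu}{a^k},
    \qquad
    \Hdiv{\nu}{b^{k+1}} \leq \Hdiv{a^k}{\mu},
\end{equation}
and the analogous inequalities with the roles of $(X,\mu)$ and $(Y,\nu)$ exchanged. Note that the order of the arguments flips at each half-step. Chaining two consecutive half-steps therefore gives
\begin{equation}
    \Hdiv{a^{k+2}}{\mu} \leq \Hdiv{\nu}{b^{k+1}} \leq \Hdiv{a^k}{\mu},
    \qquad
    \Hdiv{\mu}{a^{k+2}} \leq \Hdiv{b^{k+1}}{\nu} \leq \Hdiv{\mu}{a^k},
\end{equation}
and the same chains hold for $b$ along odd indices. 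These are exactly the four claimed monotonicities.

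\textbf{Step 3 (indices, the expected obstacle).} The only delicate point is the range of indices. Step 1 uses that the previous half-step has already fitted one marginal. This holds for every $k \geq 1$ but not for $k = 0$, where $\pi^0$ depends on the arbitrary $f^0$ and $b^0 \neq \nu$ in general. For the odd sequences there is no issue. For the even sequences, the argument covers all even $k \geq 2$. The comparison with $k=0$ has to be read with the convention that $\pi^0$ plays no role, as in the first remark; I expect it to fail for an arbitrary $f^0$. I would also check that all relative entropies involved are finite. This holds because the connectedness of $\EEE$ and $\mu_{\min}, \nu_{\min} > 0$ ensure that every $a^k$ and $b^k$ has full support.
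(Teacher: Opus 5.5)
Your proposal is correct and takes essentially the same route as the paper. The paper itself gives no argument and only cites Nutz, Proposition~6.10, whose proof is this standard data-processing one: each half-step preserves the conditional law of the non-updated coordinate, and chaining two half-steps, with the argument order flipping each time, gives all four monotonicities.

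\textbf{Step~1 has its parities swapped.} By \autoref{lm:intro:marginals_match} and \autoref{lm:intro:fk+1-fk}, it is for even $k \geq 2$ that $Y_\sharp \pi^k = \nu$ and $\pi^{k+1}_{ij} = \pi^k_{ij}\,\mu_i/(X_\sharp \pi^k)_i$. For odd $k$ one instead has $X_\sharp \pi^k = \mu$, and the columns are rescaled by $\nu_j/(Y_\sharp \pi^k)_j$. This corrected labeling is the one your Step~2 chains and Step~3 actually rely on.

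\textbf{Your $k=0$ caveat is genuine.} Take $f^0 = f[g^0]$. Then $\pi^0 = \pi^1$, so $\Hdiv{X_\sharp \pi^0}{\mu} = 0$, whereas $\Hdiv{X_\sharp \pi^2}{\mu} > 0$ unless $\pi^2$ already has both marginals exact. The two even-indexed sequences should therefore be read for $k \geq 2$.
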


\begin{proof}
    See \cite[Proposition~6.10]{nutz2021introduction}.
\end{proof}

The following non-expansiveness result is classical
in computational optimal transport
and in the context of the Hilbert projective metric.

\begin{lemma} \label{lm:intro:varseminorm_nonexpansive}
    We have
    \begin{equation}
        \forall g,g' \in \RR^n,~
        \norm{f[g] - f[g']}_{\var}
        \leq \norm{g-g'}_{\var}
        ~\quad \text{and} \quad~
        \forall f,f' \in \RR^m,~
        \norm{g[f] - g[f']}_{\var}
        \leq \norm{f-f'}_{\var}
    \end{equation}
    where $\norm{\cdot}_{\var}$ denotes the variation semi-norm,
    defined for any vector by
    \begin{equation} \label{eq:intro:def_var}
        \forall N, \forall h \in \RR^N,~
        \norm{h}_{\var}
        = \frac12 (\max h - \min h) 
        = \inf_{b \in \RR} \norm{h-b \bmone_N}_\infty.
    \end{equation}
\end{lemma}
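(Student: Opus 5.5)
By symmetry it suffices to prove the first inequality. The second is identical with the roles of $(i,\mu)$ and $(j,\nu)$ exchanged. Fix $g,g'\in\RR^n$ and set $h=g-g'$. The plan is to compare $f[g]_i$ and $f[g']_i$ coordinate by coordinate. We show that their difference always lies in $[\min h,\max h]$, and then take the oscillation over $i$.

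Fix an index $i$. By definition, $f[g]_i=-\tau\log\sum_j e^{[-C_{ij}+g_j]/\tau}\nu_j$, where the sum effectively runs over the $j$ with $(i,j)\in\EEE$. This set is nonempty because the bipartite graph is connected, so $f[g]_i$ is finite. Writing $g_j=g'_j+h_j$ with $\min h\le h_j\le\max h$, monotonicity of the exponential gives
\begin{equation}
    e^{\min h/\tau}\sum_j e^{[-C_{ij}+g'_j]/\tau}\nu_j
    \le \sum_j e^{[-C_{ij}+g_j]/\tau}\nu_j
    \le e^{\max h/\tau}\sum_j e^{[-C_{ij}+g'_j]/\tau}\nu_j .
\end{equation}
Applying $-\tau\log$, which is decreasing, yields
\begin{equation}
    f[g']_i-\max h\le f[g]_i\le f[g']_i-\min h .
\end{equation}
In other words, $d_i:=f[g]_i-f[g']_i\in[-\max h,-\min h]$ for every $i$.

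Because every $d_i$ lies in an interval of length $\max h-\min h$, we get $\max_i d_i-\min_i d_i\le \max h-\min h$. By the definition \eqref{eq:intro:def_var}, this is exactly $\norm{f[g]-f[g']}_{\var}\le\norm{g-g'}_{\var}$. The claim for $g[\cdot]$ follows verbatim, using $\mu$ and summation over $i$.

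There is no real obstacle; the only point requiring care is that each sum is over a nonempty set of finite terms, so that every quantity is finite. This holds since the graph has no isolated vertex. As an equivalent route, one can observe that $f[\cdot]$ is monotone and commutes with adding constants, $f[g+b\bmone_n]=f[g]-b\bmone_m$. Any such map is $1$-Lipschitz for the oscillation seminorm, and the argument above is just this observation made explicit.
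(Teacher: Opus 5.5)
Your proof is correct and follows essentially the same route as the paper: both bound $f[g]_i$ coordinate by coordinate, squeezing the log-sum-exp through monotonicity of the exponential, and then pass to the oscillation over $i$. The only difference is presentational. You use the max--min form of the seminorm directly, placing every $d_i$ in the interval $[-\max h,-\min h]$. The paper instead bounds $d_i$ shifted by a constant $b$ in terms of $\|g-g'+b\|_\infty$ and then takes the infimum over $b$. Your version is slightly cleaner and avoids the sign bookkeeping in that shift.
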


\begin{proof}
    Let $g, \tg \in \RR^n$. By definition, for any $i$ and $b \in \RR$,
    \begin{align}
        f[g]_i - f[\tg]_i
        + b
        &= \tau \log \frac{
            \sum_j e^{[-C_{ij}+g_j+b]/\tau} \nu_j
        }{
            \sum_{j'} e^{[-C_{ij'}+\tg_{j'}]/\tau} \nu_{j'}
        }
        = \tau \log \frac{
            \sum_j e^{[-C_{ij}+\tg_j]/\tau}
            \,
            e^{[g_j-\tg_j+b]/\tau}
            \nu_j
        }{
            \sum_{j'} e^{[-C_{ij'}+\tg_{j'}]/\tau} \nu_{j'}
        } \\
        &\leq \tau \log \frac{
            \sum_j e^{[-C_{ij}+\tg_j]/\tau}
            \,
            e^{\norm{g-\tg+b \bmone_n}_\infty/\tau}
            \nu_j
        }{
            \sum_{j'} e^{[-C_{ij'}+\tg_{j'}]/\tau} \nu_{j'}
        } 
        = \norm{g-\tg+b \bmone_n}_\infty
    \end{align}
    and symmetrically, in the other direction,
    \begin{equation}
        f[\tg]_i - f[g]_i
        - b
        \leq 
        \norm{\tg-g-b \bmone_n}_\infty.
    \end{equation}
    Thus,
    \begin{align*}
        \forall b,~~
        2 \norm{f[g]-f[\tg]}_{\var}
        &= \max \left( f[g] - f[\tg] + b \bmone_n \right)
        - \min \left( f[g] - f[\tg] + b \bmone_n \right)
        \leq 2 \norm{g-\tg+b \bmone_n}_\infty \\
        \text{and so}~~~~
        \norm{f[g]-f[\tg]}_{\var}
        &\leq \inf_{b \in \RR} \norm{g-\tg+b \bmone_n}_\infty
        = \norm{g-\tg}_{\var}.
    \end{align*}
    The statement for $g[\cdot]$ follows similarly.
\end{proof}

Finally, for ease of presentation, let us formally clarify the structure of the set of minimizers of~$\Psi$.

\begin{lemma}[Normalized minimizers] \label{lm:intro:normalized_minimizers}
    We have the equivalences
    \begin{equation}
        \forall (f^*, g^*) \in \argmin \Psi,~~
        \sum\nolimits_{ij} e^{[-C_{ij} + f^*_i + g^*_j]/\tau} \mu_i \nu_j = 1
        ~\iff~
        f^* = f[g^*]
        ~\iff~
        g^* = g[f^*].
    \end{equation}
    We will call the $(f^*, g^*)$ satisfying these conditions the \emph{normalized minimizers} of $\Psi$.
\end{lemma}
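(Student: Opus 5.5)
The plan is to use first-order optimality of $(f^*, g^*)$ together with the invariance of $\Psi$ under the shift $(f + c\bmone_m, g - c\bmone_n)$.

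\textbf{Step 1: the normalization condition holds automatically.} Take any $(f^*, g^*) \in \argmin \Psi$. Consider $\phi(c) = \Psi(f^* + c\bmone_m, g^*)$. Since $\mu^\top \bmone_m = 1$, we have
\[
\phi(c) = \tau \log \big( e^{c/\tau} Z(f^*, g^*) \big) - \mu^\top f^* - c - \nu^\top g^* = \Psi(f^*, g^*).
\]
So $\phi$ is constant, and this says nothing yet. The information comes from the gradient instead. By \autoref{lm:intro:nablaPsi}, $\nabla \Psi(f^*, g^*) = 0$, which gives $X_\sharp \pi[f^*, g^*] = \mu$. Writing this out,
\[
\sum_j e^{[-C_{ij} + f^*_i + g^*_j]/\tau} \mu_i \nu_j = Z(f^*, g^*)\, \mu_i \quad \text{for all } i.
\]
I note that $Z$ itself is not pinned down by optimality. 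Indeed, $(f^* + c\bmone_m, g^*)$ is still a minimizer, with $Z$ multiplied by $e^{c/\tau}$. Hence the three conditions in the statement are genuinely restrictions, and the task is to show they are equivalent to each other.

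\textbf{Step 2: first equivalence.} Using the displayed identity, for each $i$,
\[
f[g^*]_i = -\tau \log \sum_j e^{[-C_{ij} + g^*_j]/\tau} \nu_j = -\tau \log \big( Z(f^*, g^*)\, e^{-f^*_i/\tau} \big) = f^*_i - \tau \log Z(f^*, g^*).
\]
Therefore $f^* = f[g^*]$ holds if and only if $Z(f^*, g^*) = 1$, which is exactly the normalization condition.

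\textbf{Step 3: second equivalence.} Symmetrically, the condition $Y_\sharp \pi[f^*, g^*] = \nu$ gives $g[f^*] = g^* - \tau \log Z(f^*, g^*)$. So $g^* = g[f^*]$ holds if and only if $Z(f^*, g^*) = 1$. Chaining Steps 2 and 3 yields all the stated equivalences.

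There is no real obstacle. The only care needed is in Steps 2 and 3, where we divide by $\mu_i$ and take logarithms. These operations are legitimate because $\mu_{\min}, \nu_{\min} > 0$ and $Z > 0$, the latter since $\EEE$ is nonempty.
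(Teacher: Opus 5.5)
Your proof is correct and follows essentially the paper's route: the paper writes $f^* = f[g^*] + b\bmone_m$ from the characterization of $\argmin \Psi(\cdot, g^*)$ and notes $Z(f^*,g^*) = e^{b/\tau}$, while you derive the same offset explicitly, $f^* = f[g^*] + \tau \log Z(f^*,g^*)\,\bmone_m$, from the stationarity condition $X_\sharp \pi[f^*,g^*] = \mu$ (and symmetrically for $g$), which gives both directions of each equivalence at once. The only blemish is expository: the heading of your Step~1 is misleading, since what holds automatically there is the marginal condition rather than the normalization $Z=1$, and the computation of $\phi(c)$ is an unneeded detour.
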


\begin{proof}
    Denote $Z(f, g) = \sum_{ij} e^{[-C_{ij}+f_i+g_j]/\tau} \mu_i \nu_j$.
    The implications 
    $f^* = f[g^*]$ or $g^* = g[f^*] \implies Z(f^*, g^*) = 1$
    can be checked by explicit computations (and they hold for any vectors $f^*, g^*$, not just for minimizers).
    For the other direction, one can check by explicit computations that
    $\forall g, \argmin \Psi(\cdot, g) = \{ f[g] + b \bmone_m, b \in \RR \}$
    and
    $\forall f, \argmin \Psi(f, \cdot) = \{ g[f] + b \bmone_n, b \in \RR \}$.
    So for any $(f^*, g^*) \in \argmin \Psi$,
    since $f^* \in \argmin \Psi(\cdot, g^*)$,
    there exists $b \in \RR$ such that $f^* = f[g^*] + b \bmone_m$, and
    \begin{equation}
        Z(f^*, g^*)
        = Z(f[g^*] + b \bmone_m, g^*)
        = e^b\, Z(f[g^*], g^*)
        = e^b
    \end{equation}
    by definition of $Z(\cdot, \cdot)$.
    Thus, if $Z(f^*, g^*) = 1$ then $b=0$ and $f^* = f[g^*]$.
\end{proof}

\begin{remark}
    One can show that the normalized minimizers of $\Psi$ are precisely the minimizers of $\tPsi(f, g) = \tau \sum_{ij} e^{[-C_{ij}+f_i+g_j]/\tau} \mu_i \nu_j - \tau - \mu^\top f - \nu^\top g$.
    Moreover, for any normalized minimizer $(f^*, g^*)$, 
    $\argmin \Psi = \{ (f^* + b \bmone_m, g^* + b' \bmone_n), b, b' \in \RR \}$,
    while 
    ${ \argmin \tPsi = \{ (f^* + b \bmone_m, g^* - b \bmone_n), b \in \RR \} }$.
\end{remark}

\begin{remark}
    The slow polynomial convergence bound $E_k \leq O(1/\sqrt{k})$ follows immediately from \autoref{lm:intro:Psik-Psik+1} and \autoref{lm:intro:sqgrad_decr}.
    Indeed, denoting $V_k = \Psi(f^k, g^k) - \inf \Psi$ and $G_k^2 = \Hdiv{\mu}{X_\sharp \pi^k} + \Hdiv{\nu}{Y_\sharp \pi^k}$, by taking a telescopic sum in \autoref{lm:intro:Psik-Psik+1} we have
    \begin{equation}
        \frac12 E_k^2 \leq G_k^2 \leq \frac{1}{k} \sum_{s=1}^k G_s^2
        = \frac{1}{\tau k} \sum_{s=1}^k (V_s - V_{s+1})
        = \frac{1}{\tau k} (V_1 - V_{k+1})
        \leq \frac{V_1}{\tau k}
    \end{equation}
    where the first inequality follows from Pinsker's inequality
    (and the fact that at any iteration, either $\norm{X_\sharp \pi^k - \mu}_1 = 0$ or $\norm{Y_\sharp \pi^k - \nu}_1 = 0$ by \autoref{lm:intro:marginals_match}) 
    and the second inequality follows from \autoref{lm:intro:sqgrad_decr}.
    Note that $V_1 = \Psi(f[g^0], g^0) - \inf \Psi$ is finite as soon as $\inf \Psi>-\infty$, i.e., as soon as $A$ is asymptotically scalable.
    All of the above ideas are contained in \cite{altschuler2017near} and \cite{leger2021gradient}.
\end{remark}

\section{Warm-up: fast polynomial bound in the exactly scalable case} \label{sec:warmup}

In this section, we suppose $A$ is exactly scalable, i.e., $\Psi$ attains its infimum at some $(f^*, g^*)$.
As a warm-up, we present a complete convergence analysis of the Sinkhorn algorithm for this case.
We proceed by following the same ideas as \cite{dvurechensky2018computational} for the case where $A$ is positive, and examining the steps where adaptations are needed.

Our result is as follows. The remainder of this section is dedicated to its proof.
\begin{theorem} \label{thm:warmup:fastexact}
    Suppose that $A$ is exactly scalable.
    Let $K, \theta$ be defined by
    \begin{equation} \label{eq:warmup:def_K_theta}
        K = \max_{(i,j) \in \EEE} C_{ij}
        - \tau \log \left( \mu_{\min} \vee \nu_{\min} \right)
        \qquad \text{and} \qquad
        \theta = -\tau \log \sum_{ij} e^{-C_{ij}/\tau} \mu_i \nu_j
    \end{equation}
    and $\Delta>0$ defined by
    \begin{equation} \label{eq:warmup:def_Delta}
        \Delta = \min_{\substack{I \subset \{1 \dots m\} \\ J \subset \{1 \dots n\}}}~
        \abs{\sum_{i \in I} \mu_i - \sum_{j \in J} \nu_j}
        ~~~~\text{subject to}~~~~
        \sum_{i \in I} \mu_i \neq \sum_{j \in J} \nu_j.
    \end{equation}
    Further suppose the Sinkhorn algorithm is initialized with $g^0 = 0$.
    Then the $\ell_1$-norm marginal error
    $E_k = \norm{X_\sharp \pi^k - \mu}_1 + \norm{Y_\sharp \pi^k - \nu}_1$
    is bounded as 
    \begin{equation}
        \forall k \geq 3,~
        E_k \leq \frac{4 \sqrt{2}\, (K-\theta)}{\tau \Delta} \frac{1}{\sqrt{k (k-2)}}.
    \end{equation}
\end{theorem}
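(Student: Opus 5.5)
The plan follows \cite{dvurechensky2018computational}. Set $V_k = \Psi(f^k,g^k) - \Psi(f^*,g^*)$ with $(f^*,g^*)$ a normalized minimizer (\autoref{lm:intro:normalized_minimizers}). By convexity (\autoref{lm:intro:nablaPsi}) and \autoref{lm:intro:marginals_match}, for $k \geq 1$ only one block of the gradient is nonzero. Say $k$ is even, so $Y_\sharp\pi^k=\nu$ and $\nabla_g\Psi=0$. Because the gradient in $f$ sums to zero, we may subtract any constant from $f^k-f^*$. This gives
\begin{equation}
    V_k \leq \langle X_\sharp\pi^k-\mu,\, f^k-f^*\rangle \leq \norm{X_\sharp\pi^k-\mu}_1 \, \norm{f^k-f^*}_{\var} = E_k\, R_k ,
\end{equation}
where $R_k$ is the variation seminorm of whichever block is active. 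Combining \autoref{lm:intro:Psik-Psik+1} with Pinsker's inequality gives $V_k - V_{k+1} \geq \frac{\tau}{2} E_k^2 \geq \frac{\tau}{2R^2} V_k^2$, where $R$ is a uniform bound on $R_k$. The standard recursion argument ($1/V_{k+1}-1/V_k \geq \tau/(2R^2)$) then yields $V_k \leq 2R^2/(\tau (k-1))$. Finally, I convert back to $E_k$. Telescoping over $s\in[k/2,k]$ gives $\sum_s \frac\tau2 E_s^2 \leq V_{k/2}$. The monotonicity of \autoref{lm:intro:sqgrad_decr} (applied within the same parity class) lets me bound $E_k^2 \lesssim \frac{1}{k\tau}V_{k/2} \lesssim R^2/(\tau^2 k(k-2))$. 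I expect this to produce the $\sqrt{k(k-2)}$ factor with $R = \sqrt{2}\,(K-\theta)\cdot$const$/\Delta$ after bookkeeping of the parity shifts, starting at $k\geq3$.

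The main obstacle is the uniform bound on $R_k=\norm{f^k-f^*}_{\var}$ (or $\norm{g^k-g^*}_{\var}$). In the positive case this is immediate from the bounded oscillation of $C$, but here $C$ can be infinite and the minimizer potentials may be large. By \autoref{lm:intro:varseminorm_nonexpansive}, $\norm{f^{k}-f^*}_{\var} = \norm{f[g^{k-1}]-f[g^*]}_{\var}\leq \norm{g^{k-1}-g^*}_{\var}$, and similarly in the other direction. So the variation distance to the minimizer is non-increasing along iterations, and it suffices to bound $\norm{f^1-f^*}_{\var}\leq\norm{g^0-g^*}_{\var}=\norm{g^*}_{\var}$ (using $g^0=0$), i.e.\ $\norm{f^*}_{\var}+\norm{g^*}_{\var}$ for a normalized minimizer.

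To bound the minimizer I will use the optimal plan $\pi^*=\pi[f^*,g^*]$, which has marginals $\mu,\nu$ and full support on $\EEE$, together with the argument of \cite{kalantari2008complexity}. First, sort the values of $f^*$ into a threshold set $I=\{i: f^*_i \geq t\}$ and let $J=\{j: g^*_j > s\}$ for suitable levels. For $(i,j)\in\EEE$ with $f^*_i+g^*_j$ large, the entry $\pi^*_{ij} = e^{[-C_{ij}+f^*_i+g^*_j]/\tau}\mu_i\nu_j\leq 1$ forces $f^*_i+g^*_j\leq K$ (using $\mu_i\nu_j\geq \mu_{\min}\vee\nu_{\min}$ times the other factor). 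Second, the normalization $\sum\pi^*=1$ together with Jensen and the definition of $\theta$ gives a lower bound of the type $\sum_{ij}\pi^*_{ij}(f^*_i+g^*_j)\geq\theta$. Third, to control gaps across a level: if $f^*$ had a large gap, then mass conservation $\sum_I\mu_i$ versus $\sum_{J}\nu_j$, with the exactness gap $\Delta$, forces at least $\Delta$ mass of $\pi^*$ to sit on edges where $f^*_i+g^*_j$ is very negative. That contradicts the lower bound from $\theta$, and each such gap is bounded by $(K-\theta)/\Delta$. Summing over the graph levels (connectivity of the bipartite graph) gives $\norm{f^*}_{\var}+\norm{g^*}_{\var}\lesssim (K-\theta)/\Delta$. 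Getting this level-set/gap argument clean, without an extra factor of $m+n$, is the step I expect to be hardest. The first thing I would try is a single cut at the midpoint combined with $\Delta$, rather than iterating over gaps.
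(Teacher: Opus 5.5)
Your reduction follows the paper's own steps: the convexity/variation-seminorm inequality, the recursion $V_{k+1}\le V_k-\frac{\tau}{2R^2}V_k^2$, the doubling trick, and non-expansiveness to reduce everything to $R=\norm{g^*}_{\var}$ for a normalized minimizer. Two small corrections are needed there.
\begin{itemize}
\item In the doubling step, telescope the KL terms $\Hdiv{\mu}{X_\sharp\pi^s}+\Hdiv{\nu}{Y_\sharp\pi^s}$, which are what \autoref{lm:intro:sqgrad_decr} shows to be monotone (not $E_s$), and apply Pinsker only at the end.
\item This yields $E_k\le\frac{4\sqrt2\,R}{\tau\sqrt{k(k-2)}}$, so the stated constant requires exactly $R\le(K-\theta)/\Delta$, not $\sqrt2\,(K-\theta)\cdot\mathrm{const}/\Delta$.
\end{itemize}

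The genuine gap is the bound on $\norm{g^*}_{\var}$ itself. It is the only new ingredient of the exactly scalable case, and you leave it open.
\begin{itemize}
\item Your first ingredient is too weak as written. From $\pi^*_{ij}\le 1$ you only get $f^*_i+g^*_j\le\max_{(i,j)\in\EEE}C_{ij}-\tau\log(\mu_{\min}\nu_{\min})$. To reach $K$ you must use $\pi^*_{ij}\le\mu_i$ and $\pi^*_{ij}\le\nu_j$.
\item Your third ingredient is where the argument breaks. A cut at a level only shows that $\pi^*$ puts mass $\ge\Delta$ on the edges straddling it. Those edges have $K-f^*_i-g^*_j$ large only if the cut lies in a gap of the merged value set $\{f^*_i-K/2\}_i\cup\{K/2-g^*_j\}_j$; a gap in $f^*$ alone is not enough.
\item Bounding each such gap separately by $(K-\theta)/\Delta$ and summing loses exactly the factor (number of gaps) you are worried about. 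A single midpoint cut does not control the range at all.
\end{itemize}

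The missing observation is that all these charges come from one shared budget $K-\theta$. Put $h_i=f^*_i-K/2$ and $\tilde k_j=K/2-g^*_j$. Then $h_i\le\tilde k_j$ on $\EEE$, and $K-f^*_i-g^*_j=\int_\RR\mathbf 1[h_i\le t<\tilde k_j]\,dt$. Hence
\begin{equation}
    K-\theta\ \ge\ K-\mu^\top f^*-\nu^\top g^*
    =\sum_{ij}\pi^*_{ij}\,(\tilde k_j-h_i)
    =\int_\RR\phi(t)\,dt,
    \qquad
    \phi(t)=\pi^*\big(\{h\le t\}\times\{\tilde k>t\}\big)=\nu(\{\tilde k>t\})-\mu(\{h>t\}).
\end{equation}
Here $\phi(t)\ge0$ is a difference of the type defining $\Delta$. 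If it vanished, then full support of $\pi^*$ on $\EEE$, together with $h_i\le\tilde k_j$ on edges, would make $\EEE$ split along level $t$. Connectivity forbids this for any $t$ inside the range of the values $h_i,\tilde k_j$. So $\phi\ge\Delta$ on that whole range, which gives $\max g^*-\min g^*\le(K-\theta)/\Delta$, i.e.\ $\norm{g^*}_{\var}\le(K-\theta)/(2\Delta)$. That is more than enough.

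The paper gets the bound instead through an LP vertex argument (\autoref{lm:warmup:kalantari08_lm5152}, \autoref{lm:warmup:kalantari08_thm51}).
\begin{itemize}
\item A normalized minimizer lies in the bounded polytope $\Omega$ cut out by $f_i+g_j\le K$ on $\EEE$, $\mu^\top f+\nu^\top g\ge\theta$, and $f_1=K/2$.
\item At a vertex, the active edge constraints form a forest with at most two components. So $(f,g)$ equals $(K/2,K/2)$ on one component and $(K/2+t,K/2-t)$ on the other.
\item The active $\theta$-constraint then reads $\theta=K+(\mu(I_2)-\nu(J_2))\,t$, giving $|t|\le(K-\theta)/\Delta$.
\end{itemize}
Either route closes the gap. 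Without one of them, the proposal does not prove the theorem.
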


\begin{remark} \label{rk:warmup:K_theta}
    The quantities $K, \theta$ are ordered as $K \geq \theta$.
    Indeed,
    \begin{equation}
        \theta = -\tau \log \sum_{(i,j) \in \EEE} e^{-C_{ij}/\tau} \mu_i \nu_j
        \leq -\tau \log \min_{(i,j) \in \EEE} e^{-C_{ij}/\tau} \cdot \sum_{(i,j) \in \EEE} \mu_i \nu_j
        = \max_{(i,j) \in \EEE} C_{ij}
        - \tau \log \sum_{(i,j) \in \EEE} \mu_i \nu_j
    \end{equation}
    and $\sum_{(i,j) \in \EEE} \mu_i \nu_j
    = \sum_i \sum_{j: (i,j) \in \EEE} \mu_i \nu_j
    \geq \sum_i \mu_i \nu_{\min} = \nu_{\min}$
    and likewise $\sum_{(i,j) \in \EEE} \mu_i \nu_j \geq \mu_{\min}$, so
    $\theta \leq \max_\EEE C - \tau \log (\mu_{\min} \vee \nu_{\min}) = K$.
    
    Moreover, tracking the equality cases shows that
    $\theta = K$ if and only if $C_{ij} = C_{i'j'}$ for all $(i,j), (i',j') \in \EEE$, $m=n$, $\EEE = \{ (i, \sigma(i)), i \leq m \}$ for some permutation $\sigma$, and $\mu = \nu = (\frac1m, ..., \frac1m)$.
\end{remark}

\begin{remark} \label{rk:warmup:ultheta}
    As $\tau \to 0$, by a classical property of the log-sum-exp function, $\theta$ converges to
    \begin{equation} \label{eq:warmup:def_ultheta}
        \ul\theta = \min_{ij} C_{ij}.
    \end{equation}
    One can also check that $\theta \geq \ul\theta$ for any $\tau>0$, so $\theta$ can be replaced by $\ul\theta$ in the theorem statement for a simpler (but looser) upper bound.
\end{remark}

\begin{remark}
    The quantity $\Delta$ arises naturally from adapting the arguments of \cite{kalantari2008complexity}.
    For ease of comparison, let us mention that a different convention for the target marginals' scaling was adopted there,
    and that the corresponding quantity of interest is the one denoted by ``$h$'' in that reference.
\end{remark}

\subsection{Main body of the analysis}

The main body of the analysis actually follows along exactly the same lines as \cite{dvurechensky2018computational} for the case where $A$ is positive; no adaptations are needed.
Difficulties will arise only for bounding the quantity $D$ appearing in \eqref{eq:warmup:unif_bd_var} in \autoref{lm:warmup:Psik_leq_1/k}, which is done in the next subsection.

\begin{lemma} \label{lm:warmup:convexity_ineq}
    For any minimizer $(f^*, g^*)$ of $\Psi$,
    for any $f, g$,
    \begin{align}
        \Psi(f, g) - \Psi(f^*, g^*)
        &\leq 
        \sum_i (f_i-f^*_i) ((X_\sharp \pi^k)_i - \mu_i)
        + \sum_j (g_j-g^*_j) ((Y_\sharp \pi^k)_j - \nu_j) \\
        &\leq \left( \norm{f-f^*}_{\var} \vee \norm{g-g^*}_{\var} \right)
        E_k
    \end{align}
    where we recall that $\norm{\cdot}_{\var}$ denotes the variation semi-norm defined in \eqref{eq:intro:def_var}
    and
    $E_k = \norm{X_\sharp \pi^k - \mu}_1 + \norm{Y_\sharp \pi^k - \nu}_1$.
\end{lemma}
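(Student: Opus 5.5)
The statement should be read with $(f,g) = (f^k, g^k)$, since the right-hand side involves $\pi^k = \pi[f^k,g^k]$; I will prove it in that form, for any $k \geq 0$. The plan is a first-order convexity argument followed by a Hölder-type bound that exploits the shift-invariance of the gradient.

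\emph{First inequality.} By \autoref{lm:intro:nablaPsi}, $\Psi$ is convex and differentiable. Its gradient at $(f^k,g^k)$ has components $(X_\sharp \pi^k)_i - \mu_i$ and $(Y_\sharp \pi^k)_j - \nu_j$. The convexity (supporting hyperplane) inequality at $(f^k,g^k)$, evaluated at $(f^*,g^*)$, reads
\begin{equation}
    \Psi(f^*,g^*) \geq \Psi(f^k,g^k) + \langle \nabla \Psi(f^k,g^k), (f^*-f^k, g^*-g^k) \rangle .
\end{equation}
Rearranging gives exactly the first claimed bound.

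\emph{Second inequality.} The key observation is that each gradient block sums to zero. Indeed, $\pi^k$ is a probability distribution, so $\sum_i (X_\sharp \pi^k)_i = 1 = \sum_i \mu_i$, and likewise $\sum_j (Y_\sharp \pi^k)_j = 1 = \sum_j \nu_j$. Hence, for any $b \in \RR$,
\begin{equation}
    \sum_i (f^k_i - f^*_i)\big((X_\sharp \pi^k)_i - \mu_i\big) = \sum_i (f^k_i - f^*_i - b)\big((X_\sharp \pi^k)_i - \mu_i\big) \leq \norm{f^k - f^* - b\bmone_m}_\infty \norm{X_\sharp \pi^k - \mu}_1 .
\end{equation}
Taking the infimum over $b$ and using the second expression for $\norm{\cdot}_{\var}$ in \eqref{eq:intro:def_var}, this is at most $\norm{f^k-f^*}_{\var} \norm{X_\sharp\pi^k-\mu}_1$. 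The same argument bounds the $g$-sum by $\norm{g^k-g^*}_{\var} \norm{Y_\sharp\pi^k-\nu}_1$.

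Adding the two bounds and bounding each $\var$-seminorm by their maximum gives $\left(\norm{f^k-f^*}_{\var} \vee \norm{g^k-g^*}_{\var}\right) E_k$. There is no real obstacle. The only point requiring care is the zero-sum property of the gradient, which needs $\pi^k$ normalized; this holds by construction through the factor $1/Z$ in the definition of $\pi[f,g]$, so it holds even at $k=0$.
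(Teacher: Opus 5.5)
Your proposal is correct and follows essentially the same route as the paper: the first-order convexity inequality with the gradient from \autoref{lm:intro:nablaPsi}, then a H\"older bound after subtracting a constant $b\bmone$ (allowed because both marginals are probability vectors), followed by an infimum over $b$. Your reading of the statement with $(f,g)=(f^k,g^k)$ is exactly how the paper uses it. Like the paper's argument, yours never uses that $(f^*,g^*)$ is a minimizer, which the paper later exploits in \autoref{lm:fastasymp:Psik_Ek_leq_Q}.
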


\begin{proof}
    By convexity of $\Psi$,
    \begin{equation}
        \Psi(f,g) - \Psi(f^*, g^*)
        \leq \begin{pmatrix}
            f-f^* \\ g-g^*
        \end{pmatrix}^\top
        \begin{pmatrix}
            \nabla_f \Psi(f, g) \\
            \nabla_g \Psi(f, g)
        \end{pmatrix},
    \end{equation}
    hence the first inequality by substituting the $\nabla_f \Psi, \nabla_g \Psi$ by their values computed in \autoref{lm:intro:nablaPsi}.
    The second inequality follows by noting that
    for any $h \in \RR^N$ and $\mu, \mu' \in \Delta_N$, by H\"older's inequality,
    \begin{equation}
        \forall b \in \RR,~
        \sum_i h_i (\mu_i-\mu'_i)
        = \sum_i (h_i - b) (\mu_i-\mu'_i)
        \leq \norm{h-b \bmone_N}_\infty \norm{\mu-\mu'}_1
    \end{equation}
    and so by taking an infimum over $b$, 
    $\sum_i h_i (\mu_i - \mu'_i) \leq \norm{h}_{\var} \norm{\mu-\mu'}_1$.
\end{proof}

\begin{lemma} \label{lm:warmup:Psik_leq_1/k}
    Suppose that 
    \begin{equation} \label{eq:warmup:unif_bd_var}
        \sup_{k \geq k_0}
        \norm{f^k-f^*}_{\var} \vee \norm{g^k-g^*}_{\var}
        \leq D
    \end{equation}
    for some minimizer $(f^*, g^*)$ of $\Psi$, some $k_0 \geq 1$, and some $D < \infty$.
    Then
    \begin{align}
        \forall k \geq k_0,
        &~~ 
        \left[ \Psi(f^k, g^k) - \Psi(f^*, g^*) \right]^2 \leq \frac{2 D^2}{\tau} \left( \Psi(f^k, g^k) - \Psi(f^{k+1}, g^{k+1}) \right) \\
        \text{and}~~~~
        \forall k \geq k_0+1,
        &~~~ 
        \Psi(f^k, g^k) - \Psi(f^*, g^*)
        \leq \frac{2 D^2}{\tau} \frac{1}{k-k_0}.
    \end{align}
\end{lemma}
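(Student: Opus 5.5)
Write $V_k = \Psi(f^k, g^k) - \Psi(f^*, g^*) \geq 0$. The plan is the standard Dvurechensky-type argument: first obtain a one-step inequality $V_k^2 \leq \frac{2D^2}{\tau}(V_k - V_{k+1})$, then solve this quadratic recursion.

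For the first inequality, fix $k \geq k_0$ and apply \autoref{lm:warmup:convexity_ineq} at $(f,g) = (f^k, g^k)$. Together with the hypothesis \eqref{eq:warmup:unif_bd_var}, this gives $V_k \leq D\, E_k$. Next we bound $E_k^2$ by the entropy decrease.

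\begin{itemize}
    \item By \autoref{lm:intro:marginals_match}, for $k \geq 1$ one of the two marginals of $\pi^k$ already matches exactly. So $E_k$ consists of a single $\ell_1$ term, say $\norm{X_\sharp \pi^k - \mu}_1$ when $k$ is odd.
    \item Pinsker's inequality gives $\norm{X_\sharp \pi^k - \mu}_1^2 \leq 2 \Hdiv{\mu}{X_\sharp \pi^k}$. The other relative entropy term vanishes, so $E_k^2 \leq 2\left(\Hdiv{\mu}{X_\sharp \pi^k} + \Hdiv{\nu}{Y_\sharp \pi^k}\right)$.
    \item By \autoref{lm:intro:Psik-Psik+1}, the right-hand side equals $\frac{2}{\tau}\left(\Psi(f^k,g^k) - \Psi(f^{k+1},g^{k+1})\right) = \frac{2}{\tau}(V_k - V_{k+1})$.
\end{itemize}

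Squaring $V_k \leq D E_k$ (valid since $V_k \geq 0$) yields the first claim, $V_k^2 \leq \frac{2D^2}{\tau}(V_k - V_{k+1})$.

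For the second claim, set $c = \frac{\tau}{2D^2}$, so that $V_k - V_{k+1} \geq c V_k^2$ for all $k \geq k_0$. In particular $(V_k)$ is non-increasing. If some $V_k = 0$, then all later terms vanish and the bound is trivial. Otherwise, divide by $V_k V_{k+1}$:
\begin{equation}
    \frac{1}{V_{k+1}} - \frac{1}{V_k} \geq c \frac{V_k}{V_{k+1}} \geq c.
\end{equation}
Telescoping from $k_0$ to $k-1$ gives $\frac{1}{V_k} \geq \frac{1}{V_{k_0}} + c(k - k_0) \geq c(k-k_0)$. Hence $V_k \leq \frac{2D^2}{\tau}\frac{1}{k-k_0}$ for all $k \geq k_0 + 1$.

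The main subtlety is the indexing. \autoref{lm:warmup:convexity_ineq} must be read with $\pi^k = \pi[f^k,g^k]$, so that the gradient is evaluated at the current iterate. Also, $k_0 \geq 1$ is needed so that $Z(f^k, g^k) = 1$, which is what makes \autoref{lm:intro:Psik-Psik+1} and \autoref{lm:intro:marginals_match} applicable. Apart from this, the argument is routine, and the real difficulty, bounding $D$, is deferred to the next subsection.
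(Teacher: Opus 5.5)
Your proposal is correct and follows the paper's proof step for step. \autoref{lm:warmup:convexity_ineq} with the uniform bound gives $V_k \le D E_k$; the fact that only one marginal is off, Pinsker, and \autoref{lm:intro:Psik-Psik+1} turn this into the one-step inequality; and your $1/V_k$ telescoping is exactly \autoref{lm:warmup:Vk_Ak} applied to the shifted sequence $(V_{k_0+j})_{j\ge 0}$. Two harmless slips: at odd $k$ it is the $X$-marginal that is exact (so the surviving term is $\|Y_\sharp\pi^k-\nu\|_1$), and if some $V_{k'}=0$ you still need the telescoping for $k_0<k<k'$, which works because $V_k>0$ forces $V_{k_0},\dots,V_k>0$ by monotonicity.
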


\begin{proof}
    Let $k \geq k_0$. By applying the previous lemma to $f^k, g^k$ and squaring both sides of the inequality, we have
    \begin{align}
        \left[ \Psi(f^k, g^k) - \Psi(f^*, g^*) \right]^2 
        &\leq D^2 \left( \norm{X_\sharp \pi^k - \mu}_1 + \norm{Y_\sharp \pi^k - \nu}_1 \right)^2 \\
        &= D^2 \left( \norm{X_\sharp \pi^k - \mu}_1^2 + \norm{Y_\sharp \pi^k - \nu}_1^2 \right) 
    \end{align}
    since $X_\sharp \pi^k - \mu = 0$ for $k$ odd and $Y_\sharp \pi^k - \nu = 0$ for $k$ even, by \autoref{lm:intro:marginals_match}.
    So by Pinsker's inequality and \autoref{lm:intro:Psik-Psik+1},
    \begin{align}
        \left[ \Psi(f^k, g^k) - \Psi(f^*, g^*) \right]^2 
        &\leq 2 D^2 \left( \Hdiv{\mu}{X_\sharp \pi^k} + \Hdiv{\nu}{Y_\sharp \pi^k} \right) \\
        &= \frac{2 D^2}{\tau} \left( \Psi(f^k, g^k) - \Psi(f^{k+1}, g^{k+1}) \right).
    \end{align}
    
    To deduce the second inequality of the lemma, apply \autoref{lm:warmup:Vk_Ak} below to $V_k = \Psi(f^k, g^k) - \Psi(f^*, g^*)$.
\end{proof}

\begin{lemma} \label{lm:warmup:Vk_Ak}
    For any sequence of real numbers $(V_k)_{k \geq 0}$ such that
    $\forall k, V_{k+1} \leq V_k - A V_k^2$ for some constant $A > 0$, it holds
    $\forall k \geq 1, V_k \leq \frac{1}{A k}$.
\end{lemma}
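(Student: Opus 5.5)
The plan is to prove the bound by induction on $k \geq 1$, using the standard reciprocal trick: one step of the recursion increases $1/V_k$ by at least $A$. Since the sequence is arbitrary real, nonpositive terms need separate handling, and the base case is treated on its own.

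\emph{Base case $k=1$.} We have $V_1 \leq V_0 - A V_0^2 \leq \sup_{x \in \RR} (x - A x^2) = \frac{1}{4A} \leq \frac{1}{A}$. This holds whatever the sign of $V_0$.

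\emph{Inductive step.} Assume $V_k \leq \frac{1}{Ak}$ for some $k \geq 1$; we show $V_{k+1} \leq \frac{1}{A(k+1)}$.
\begin{itemize}
    \item If $V_{k+1} \leq 0$, there is nothing to prove.
    \item Otherwise $V_{k+1} > 0$. Since $V_{k+1} \leq V_k - A V_k^2 \leq V_k$, we get $V_k > 0$.
    \item From $0 < V_{k+1} \leq V_k (1 - A V_k)$, positivity forces $1 - A V_k > 0$.
    \item Taking reciprocals and using $\frac{1}{1-x} \geq 1+x$ for $x \in [0,1)$ gives
    \begin{equation}
        \frac{1}{V_{k+1}} \geq \frac{1}{V_k (1 - A V_k)} \geq \frac{1 + A V_k}{V_k} = \frac{1}{V_k} + A.
    \end{equation}
    \item By the induction hypothesis, $\frac{1}{V_k} \geq Ak$, so $\frac{1}{V_{k+1}} \geq A(k+1)$, which is the claim.
\end{itemize}

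The only point requiring care is the sign bookkeeping. The reciprocal inequality is valid only when both $V_k$ and $1 - A V_k$ are positive, and both facts are extracted from $V_{k+1} > 0$ together with the recursion. The rest is routine.
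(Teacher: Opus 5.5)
Your proof is correct and is essentially the paper's argument. Both rest on the reciprocal increment $1/V_{k+1} \geq 1/V_k + A$: the paper gets it by dividing by $V_k V_{k+1}$ and using monotonicity $V_k/V_{k+1} \geq 1$, while you get it from $\frac{1}{1-x} \geq 1+x$; the remaining difference is cosmetic, since you handle signs step by step (with the base case via $\sup_x (x - Ax^2) = \frac{1}{4A}$) rather than splitting at the first index with $V_k \leq 0$ and telescoping from $V_0$.
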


\begin{proof}
    Note that $(V_k)_{k \geq 0}$ is non-increasing since $\forall k, V_{k+1} \leq V_k - 0$.
    First suppose that $V_k>0$ for all $k$.
    Dividing both sides of the inequality 
    $V_{k+1} \leq V_k - A V_k^2$
    by $V_k V_{k+1}$, we have
    \begin{equation}
        \frac{1}{V_k} 
        \leq \frac{1}{V_{k+1}} - A \frac{V_k}{V_{k+1}}
        \leq \frac{1}{V_{k+1}} - A.
    \end{equation}
    So by a telescopic sum,
    $\frac{1}{V_0} \leq \frac{1}{V_k} - k A$ and so
    $V_k \leq \frac{1}{V_0^{-1} + kA} \leq \frac{1}{kA}$.

    Now suppose there exists $k$ such that $V_k \leq 0$ and denote by $K$ the smallest such integer.
    Applying the above reasoning for all $k < K$ shows that $\forall k < K, V_k \leq \frac{1}{kA}$, and for all later indices we have of course $\forall k \geq K, V_k \leq V_K \leq 0 \leq \frac{1}{A k}$ by monotonicity.
\end{proof}

It remains to translate this $O(1/k)$ bound on $\Psi(f^k, g^k) - \Psi(f^*, g^*)$ into a $O(1/k)$ bound on $E_k$.
This can be done using a ``switching'' strategy presented in \cite{dvurechensky2018computational}, or using a doubling trick which first appeared explicitly in the literature on computational optimal transport in \cite[Proposition~4.3]{ghosal2025convergence}.
We follow the latter path.

\begin{lemma} \label{lm:warmup:Ek_leq_1/k}
    For any $k \geq 2$, we have
    \begin{equation}
        E_k^2 \leq \frac{8}{\tau k} \left( \Psi(f^{\ceil{k/2}}, g^{\ceil{k/2}}) - \min \Psi \right).
    \end{equation}
    Consequently, if \eqref{eq:warmup:unif_bd_var} holds with some 
    $(f^*, g^*) \in \argmin \Psi$,
    $k_0 \geq 1$, 
    and $D<\infty$, 
    then
    \begin{equation}
        \forall k > 2 k_0,~
        E_k \leq \frac{4 \sqrt{2}\, D}{\tau} \frac{1}{\sqrt{k (k-2k_0)}}.
    \end{equation}
\end{lemma}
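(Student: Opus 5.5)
The plan is a doubling trick: combine the monotonicity of the squared marginal errors (\autoref{lm:intro:sqgrad_decr}) with the telescoping identity of \autoref{lm:intro:Psik-Psik+1}, summed over the second half of the iterations. Write $G_s^2 = \Hdiv{\mu}{X_\sharp \pi^s} + \Hdiv{\nu}{Y_\sharp \pi^s}$ and $V_s = \Psi(f^s, g^s) - \min \Psi$. Set $\ell = \ceil{k/2}$. Summing \autoref{lm:intro:Psik-Psik+1} from $s = \ell$ to $s = k$ gives
\begin{equation}
    \tau \sum_{s=\ell}^{k} G_s^2 = V_\ell - V_{k+1} \leq V_\ell,
\end{equation}
using $V_{k+1} \geq 0$.

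The next step is to bound each term of the left-hand sum from below by a multiple of $E_k^2$. I will split into cases according to the parity of $k$. Suppose $k$ is even, so that $E_k = \norm{X_\sharp \pi^k - \mu}_1$ by \autoref{lm:intro:marginals_match}. For each even $s \in [\ell, k]$, \autoref{lm:intro:sqgrad_decr} gives $G_s^2 \geq \Hdiv{\mu}{X_\sharp \pi^s} \geq \Hdiv{\mu}{X_\sharp \pi^k}$. Pinsker's inequality then gives $\Hdiv{\mu}{X_\sharp \pi^k} \geq \frac12 E_k^2$. The odd $k$ case is symmetric, using the $\nu$-sequence and the odd $s$.

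The only delicate point is the counting. Only the terms of the same parity as $k$ are lower-bounded, and the remaining terms are simply dropped since they are nonnegative. The number of indices $s \in [\ceil{k/2}, k]$ with $s \equiv k \pmod 2$ is at least $k/4$ for $k \geq 2$; I will check the small cases $k = 2, 3$ directly. This yields
\begin{equation}
    \frac{k}{4} \cdot \frac12 E_k^2 \leq \frac{V_\ell}{\tau},
\end{equation}
that is, $E_k^2 \leq \frac{8}{\tau k} V_{\ceil{k/2}}$. One also needs $\ell \geq 1$, so that \autoref{lm:intro:Psik-Psik+1} applies; this holds since $k \geq 2$.

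For the consequence, assume $k > 2k_0$. Then $\ceil{k/2} \geq k_0 + 1$, so \autoref{lm:warmup:Psik_leq_1/k} gives $V_{\ceil{k/2}} \leq \frac{2D^2}{\tau} \frac{1}{\ceil{k/2} - k_0}$. Since $\ceil{k/2} - k_0 \geq (k - 2k_0)/2$, this becomes $V_{\ceil{k/2}} \leq \frac{4D^2}{\tau (k - 2k_0)}$. Substituting into the first bound gives $E_k^2 \leq \frac{32 D^2}{\tau^2 k (k - 2k_0)}$, and taking square roots yields the stated bound with constant $4\sqrt2\, D/\tau$. I expect no real obstacle here; the only step needing care is the parity counting above, which uses the monotonicity along only one parity class.
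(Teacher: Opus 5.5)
Your proof is correct and is essentially the paper's doubling argument. You telescope \autoref{lm:intro:Psik-Psik+1} over the second half of the iterations, lower-bound the same-parity terms via \autoref{lm:intro:sqgrad_decr} and Pinsker, and then plug in \autoref{lm:warmup:Psik_leq_1/k}; summing directly over $s \in [\lceil k/2 \rceil, k]$ is slightly cleaner indexing than the paper's sum over $[k, 2k-1]$, and the parity count you flag equals $\lfloor k/4 \rfloor + 1 > k/4$ for every $k$, so no separate check of small cases is needed.
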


\begin{proof}
    Denote for concision 
    \begin{equation}
        \forall k,~~
        V_k = \Psi(f^k, g^k) - \min \Psi
        \qquad\text{and}\qquad
        G_k^2 = \Hdiv{\mu}{X_\sharp \pi^k} + \Hdiv{\nu}{Y_\sharp \pi^k}.
    \end{equation}
    We know by \autoref{lm:intro:Psik-Psik+1} that $V_k - V_{k+1} = \tau G_k^2$ and by \autoref{lm:intro:sqgrad_decr} that $(G_{2k})_k, (G_{2k+1})_k$ are non-increasing.
    So for any $k \geq 1$ even,
    \begin{align}
        & \frac2k \sum_{s=k}^{2k-1} G_s^2 
        = \frac2k \sum_{\substack{s=k\\ s \text{ even}}}^{2k-1} G_s^2
        + \frac2k \sum_{\substack{s=k\\ s \text{ odd}}}^{2k-1} G_s^2
        \geq G_{2k-2}^2 + G_{2k-1}^2 \\
        \text{and}\qquad
        & \frac2k \sum_{s=k}^{2k-1} G_s^2 
        = \frac2k \sum_{s=k}^{2k-1} \frac1\tau (V_s - V_{s+1})
        = \frac{2}{\tau k} (V_k - V_{2k})
        \leq \frac{2 V_k}{\tau k}.
    \end{align}
    Hence for any $k \geq 2$, by Pinsker's inequality,
    \begin{equation}
        E_k^2 \leq 2\, G_k^2
        \leq \frac{4 V_{\ceil{k/2}}}{\tau \ceil{k/2}}
        \leq \frac{8 V_{\ceil{k/2}}}{\tau k}.
    \end{equation}
    This proves the first announced inequality.
    Combined with \autoref{lm:warmup:Psik_leq_1/k},
    we get that for any $k > 2 k_0$,
    \begin{equation}
        E_k^2 \leq \frac{8}{\tau k} \frac{2 D^2}{\tau} \frac{1}{(\ceil{k/2}-k_0)}
        \leq \frac{16 D^2}{\tau^2} \frac{1}{k (k/2-k_0)}
        \leq \frac{32 D^2}{\tau^2} \frac{1}{k (k-2k_0)},
    \end{equation}
    hence the second announced inequality by taking square roots on both sides.
\end{proof}

It still remains to identify a pair $(f^*, g^*) \in \argmin \Psi$ and constants $k_0 \geq 1$ and $D < \infty$ such that \eqref{eq:warmup:unif_bd_var} holds.
This is the object of the next subsection.

\begin{remark}
    In the case where $A$ is positive, i.e., if $\forall i,j, C_{ij}<\infty$, then one can show by explicit computations that
    $\max_f \norm{g[f]}_{\var} \leq \max_i \norm{C_{i\bullet}}_{\var}$, and symmetrically for $f[\cdot]$.
    This immediately gives a usable bound to plug into \eqref{eq:warmup:unif_bd_var}.
    A slightly tighter bound can also be obtained by using that
    \begin{equation}
        \max_{f,f'} \norm{g[f]-g[f']}_{\var} 
        = \max_{g,g'} \norm{f[g]-f[g']}_{\var} 
        = \frac{1}{2\tau}\, \max_{i, i', j, j'}\, C_{ij} - C_{ij'} - C_{i'j} + C_{i'j'}
    \end{equation}
    as shown, e.g., in \cite[Theorem~6.2]{eveson1995elementary}.
    Of course, these bounds are available only if $A$ is positive.
\end{remark}

\subsection{Uniform bound on the variation semi-norm of iterates}

Recall that we call normalized minimizer of $\Psi$ any $(f^*, g^*) \in \argmin \Psi$ satisfying the equivalent conditions of \autoref{lm:intro:normalized_minimizers}.

\begin{lemma} \label{lm:warmup:easy_D}
    For any normalized minimizer $(f^*, g^*)$ of $\Psi$, \eqref{eq:warmup:unif_bd_var} holds with $k_0 = 1$ and
    $D = \norm{g^*-g^0}_{\var}$.
\end{lemma}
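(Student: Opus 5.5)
The plan is to exploit the fixed-point structure of normalized minimizers together with the non-expansiveness result \autoref{lm:intro:varseminorm_nonexpansive}. By \autoref{lm:intro:normalized_minimizers}, a normalized minimizer satisfies $f^* = f[g^*]$ and $g^* = g[f^*]$. The Sinkhorn updates apply exactly the same maps, $f^{k+1} = f[g^k]$ for $k$ even and $g^{k+1} = g[f^k]$ for $k$ odd. Each half-step therefore compares an iterate to the corresponding component of $(f^*,g^*)$ through a map that is $1$-Lipschitz in the variation semi-norm.

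First, I will show by induction on $k \geq 1$ that
\[
\norm{f^k - f^*}_{\var} \leq \norm{g^0-g^*}_{\var}
\quad\text{and}\quad
\norm{g^k - g^*}_{\var} \leq \norm{g^0-g^*}_{\var}.
\]

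Base case $k=1$. We have $g^1 = g^0$, so the $g$-bound is an equality. For the $f$-bound, $f^1 = f[g^0]$ and $f^* = f[g^*]$, so \autoref{lm:intro:varseminorm_nonexpansive} gives $\norm{f^1-f^*}_{\var} \leq \norm{g^0-g^*}_{\var}$.

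Inductive step. Suppose the bounds hold at step $k \geq 1$.
\begin{itemize}
\item If $k$ is even, then $g^{k+1}=g^k$, so the $g$-bound carries over unchanged. Also $f^{k+1} = f[g^k]$, so $\norm{f^{k+1}-f^*}_{\var} = \norm{f[g^k]-f[g^*]}_{\var} \leq \norm{g^k-g^*}_{\var} \leq D$.
\item If $k$ is odd, the argument is symmetric: $f^{k+1}=f^k$, and $g^{k+1} = g[f^k]$ with $g^* = g[f^*]$, so $\norm{g^{k+1}-g^*}_{\var} \leq \norm{f^k-f^*}_{\var} \leq D$.
\end{itemize}
Taking the supremum over $k \geq 1$ gives \eqref{eq:warmup:unif_bd_var} with $k_0=1$ and $D = \norm{g^*-g^0}_{\var}$.

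I expect no real obstacle here. The only point to check is that normalization is what makes $f^* = f[g^*]$ and $g^*=g[f^*]$ hold exactly, and that is precisely the content of \autoref{lm:intro:normalized_minimizers}. Even without normalization, shifts by constant vectors are invisible to $\norm{\cdot}_{\var}$, so the argument would go through anyway; normalization simply keeps the statement clean. The genuinely hard part, bounding $\norm{g^*-g^0}_{\var}$ in terms of $K,\theta,\Delta$, is deferred to the subsequent lemmas and is not needed here.
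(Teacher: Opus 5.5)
Your proof is correct and follows essentially the same route as the paper. The paper packages your induction as the observation that the sequence $u_k$ is non-increasing by \autoref{lm:intro:varseminorm_nonexpansive}, where $u_k$ is the variation-seminorm distance of the just-updated component to $(f^*,g^*)$; hence everything is bounded by $u_0=\norm{g^0-g^*}_{\var}$. Your side remark that normalization is not really needed, since constant shifts are invisible to $\norm{\cdot}_{\var}$, is also correct.
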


\begin{proof}
    Consider the sequence $(u_k)_{k \geq 0}$ defined by
    $u_k = \begin{cases}
        \norm{f^k-f^*}_{\var} ~\text{if $k$ is odd} \\
        \norm{g^k-g^*}_{\var} ~\text{if $k$ is even}
    \end{cases}$.
    Since $f^* = f[g^*]$ and $g^* = g[f^*]$, then by \autoref{lm:intro:varseminorm_nonexpansive}, this sequence is non-increasing.
    Hence, \eqref{eq:warmup:unif_bd_var}
    holds with $k_0 = 1$ and $D = u_0 = \norm{g^0-g^*}_{\var}$.
\end{proof}

If the Sinkhorn algorithm is initialized with $g^0=0$, which is the standard choice in practice, it only remains to bound $\inf_{(f^*, g^*) \in \argmin \Psi} \norm{g^*}_{\var}$.
This is done in the following proposition, which is adapted from \cite[Theorem~5.1]{kalantari2008complexity}.
It also shows a bound on the supremum norms which will be used in \autoref{sec:fastasymp}.

\begin{proposition} \label{prop:warmup:good_minimizer}
    There exists a normalized minimizer $(f^*, g^*)$ of $\Psi$ such that
    \begin{equation}
        \norm{f^*}_{\var}, \norm{g^*}_{\var}
        \leq \frac{K-\theta}{\Delta}
        \qquad\text{and}\qquad
        \norm{f^*}_\infty, \norm{g^*}_\infty \leq \frac{K}{2} + \frac{K-\theta}{\Delta},
    \end{equation}
    where $K, \theta, \Delta$ are the quantities defined in \eqref{eq:warmup:def_K_theta} and \eqref{eq:warmup:def_Delta}.
\end{proposition}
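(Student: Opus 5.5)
We propose to follow the Kalantari--Khachiyan style argument: use the optimality condition at a normalized minimizer to control the dual values through the sub-level structure of $f^*$ and $g^*$. First, fix any normalized minimizer $(f^*, g^*)$; by the remark following \autoref{lm:intro:normalized_minimizers}, the others are $(f^*+b\bmone_m, g^*-b\bmone_n)$. Set $\pi^* = \pi[f^*,g^*]$. By \autoref{lm:intro:nablaPsi} this $\pi^*$ has marginals exactly $\mu,\nu$, and $\pi^*_{ij}>0$ for all $(i,j)\in\EEE$. Normalization gives $\sum_{ij} e^{[-C_{ij}+f^*_i+g^*_j]/\tau}\mu_i\nu_j = 1$. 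Hence every term is at most $1$, so
\[ f^*_i + g^*_j \leq C_{ij} - \tau\log(\mu_i\nu_j) \]
for $(i,j) \in \EEE$. Moreover $\Psi(f^*,g^*) = -\mu^\top f^* - \nu^\top g^* \leq \Psi(0,0) = \theta$, which gives $\mu^\top f^* + \nu^\top g^* \geq -\theta$.

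The key step is a level-set argument bounding the spread of $f^*$ and $g^*$. Order the values of $f^*$ and $-g^*$ jointly. For a threshold $t$, let $I_t = \{i: f^*_i \geq t\}$ and $J_t = \{j : g^*_j > -t\}$. The upper bound on $f^*_i+g^*_j$ comes from the $\EEE$-edges, with each edge contributing a term bounded by roughly $K$ in the sense of the display above. A mass-balance argument is then needed. Since $\pi^*$ has marginals $\mu,\nu$ and is supported on $\EEE$, whenever $\sum_{I_t}\mu_i \neq \sum_{J_t}\nu_j$ the difference is at least $\Delta$. That excess mass must flow along edges $(i,j)\in\EEE$ with $i\in I_t$ and $j\notin J_t$ (or symmetrically), that is, along edges where $f^*_i + g^*_j$ is large.

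The plan is then a layer-cake identity. Write
\[ \mu^\top f^* + \nu^\top g^* = \int \Big(\sum_{I_t}\mu_i - \sum_{J_t}\nu_j\Big)\, dt \]
up to shifting by a constant $b$, which is harmless thanks to the $b$-freedom. Choose the shift so that $\max f^* = -\min(-g^*)$ in a balanced way, i.e.\ the shift that centers the supports. On intervals of $t$ where the mass difference is nonzero, it has absolute value at least $\Delta$. We would then use the constraint $f^*_i+g^*_j\leq K$-type on $\EEE$ together with $\mu^\top f^*+\nu^\top g^* \ge -\theta$ to show that the total length of $t$ where the difference is negative is at most $(K-\theta)/\Delta$. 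Connectivity of $\EEE$, combined with the fact that equality $\sum_{I_t}\mu=\sum_{J_t}\nu$ forces $I_t\sqcup J_t$ to be a union of components whenever $\pi^*$ is positive on $\EEE$, rules out gaps where the difference vanishes except at the extremes. This bounds $\max f^*-\min f^*$ and $\max g^* - \min g^*$, hence the variation semi-norms, by $(K-\theta)/\Delta$ after halving conventions are checked.

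Finally, the $\ell_\infty$ bound follows by choosing $b$ appropriately. Normalization together with the edge inequality pins the "center" of $f^*$ plus the center of $g^*$ within roughly $[-K, K]$. Splitting symmetrically gives the centers within $K/2$, and adding the semi-norm bound gives $\norm{f^*}_\infty,\norm{g^*}_\infty\le K/2 + (K-\theta)/\Delta$. The main obstacle I expect is the layer-cake step: making precise that the mass imbalance between $\mu(I_t)$ and $\nu(J_t)$ has the right sign, that it is at least $\Delta$ on all but a set of $t$ of controlled length, and getting the constants (factor $\frac12$ in $\norm{\cdot}_{\var}$, $\mu_{\min}\vee\nu_{\min}$ in $K$) to match exactly.
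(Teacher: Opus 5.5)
The level-set idea is sound and genuinely different from the paper's argument, but the step you flag as the main obstacle does not go through as written. There are three concrete errors.

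(i) $\Psi(0,0)=\tau\log\sum_{ij}e^{-C_{ij}/\tau}\mu_i\nu_j=-\theta$. So the constraint is $\mu^\top f^*+\nu^\top g^*\ge\theta$, not $\ge-\theta$; your sign would produce $K+\theta$.

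(ii) Normalization alone only gives $f^*_i+g^*_j\le C_{ij}-\tau\log(\mu_i\nu_j)$, which is weaker than $K$. You need the marginals: $\pi^*_{ij}\le\mu_i$ and $\pi^*_{ij}\le\nu_j$ give $f^*_i+g^*_j\le C_{ij}-\tau\log(\mu_i\vee\nu_j)\le K$ on $\EEE$.

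(iii) With $J_t=\{j:\ g^*_j>-t\}$, the integrand $\mu(I_t)-\nu(J_t)$ equals $+1$ for very negative $t$ and $-1$ for very positive $t$. It is therefore neither integrable nor of constant sign, and the displayed identity is false. Also, the offset $K$ cannot be absorbed by the $(b,-b)$ freedom, since $f^*_i+g^*_j$ is invariant under it. The correct pairing uses upper level sets of $K-g^*$, which is essentially the complement of your $J_t$ shifted by $K$. Writing $\mu(I)=\sum_{i\in I}\mu_i$,
\[
I_t=\{i:\ f^*_i\ge t\},\qquad J_t=\{j:\ K-g^*_j\ge t\},\qquad \int_{\RR}\big(\mu(I_t)-\nu(J_t)\big)\,dt=\mu^\top f^*+\nu^\top g^*-K\ \ge\ \theta-K .
\]
Let $N$ denote the neighbourhood in $\EEE$. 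The edge bound gives $N(I_t)\subseteq J_t$, hence $\mu(I_t)=\pi^*(I_t\times N(I_t))\le\nu(N(I_t))\le\nu(J_t)$. So the integrand is $\le 0$ for every $t$ and $\le-\Delta$ whenever it is nonzero, and it is therefore nonzero on a set of length at most $(K-\theta)/\Delta$. Your equality-case argument (full support of $\pi^*$, $\nu_{\min}>0$, connectivity of $\EEE$) is then correct. It shows the integrand is nonzero on the whole interval between the smallest and the largest of the numbers $f^*_i$, $K-g^*_j$.

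Hence all the $f^*_i$ and $K-g^*_j$ lie in one interval of length at most $(K-\theta)/\Delta$. This gives $\norm{f^*}_{\var},\norm{g^*}_{\var}\le(K-\theta)/(2\Delta)$, a factor $2$ better than claimed. The shift $(f^*+b\bmone_m,\,g^*-b\bmone_n)$ translates both families by $b$, so you can center the interval at $K/2$ and get $|f^*_i-K/2|,|g^*_j-K/2|\le(K-\theta)/(2\Delta)$. This replaces your vague ``centers within $[-K,K]$'' step: $f^*_i+g^*_j$ is pinned near $K$, not in $[-K,K]$. Note that this yields $\norm{f^*}_\infty\le|K|/2+(K-\theta)/(2\Delta)$, which matches the statement only when $K\ge 0$. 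For $K<0$ the $\ell_\infty$ claim as written already fails when $m=n=1$ with $C_{11}<0$, since the right-hand side is negative. The paper's proof only establishes $|f^*_i-K/2|\le(K-\theta)/\Delta$ and so has the same caveat.

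The paper, adapting \cite{kalantari2008complexity}, instead places a normalized minimizer in the polytope $\{f_i+g_j\le K \text{ on } \EEE,\ \mu^\top f+\nu^\top g\ge\theta,\ f_1=K/2\}$ and invokes its boundedness. It then inspects vertices, where the active edge constraints form either a spanning tree (all coordinates equal $K/2$) or a two-component forest (one component shifted by $t$ with $|t|\le(K-\theta)/\Delta$). A limiting argument handles $K=\theta$. Your repaired route avoids the LP/vertex machinery, the boundedness of the polytope, and the $K=\theta$ special case. It is sharper because it bounds the joint spread of $f^*$ and $K-g^*$ directly, whereas the paper bounds deviations from the anchor $f_1=K/2$ and then doubles. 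Both arguments rely on exact scalability: yours through the full support of $\pi^*$ in the equality case, the paper's through existence of a minimizer and boundedness of the polytope.
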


This proposition follows from exactly the same arguments as \cite[Theorem~5.1]{kalantari2008complexity}. 
However a different convention for the target marginals' scaling was adopted in that reference (see its Eqs.~(3), (4)) and carrying out the necessary adaptations can be tedious.
So the proof of this proposition is presented in full in \autoref{sec:apx_delayedpf_warmup} for the reader's convenience.

We can now conclude the proof of \autoref{thm:warmup:fastexact}.

\begin{proof}[Proof of \autoref{thm:warmup:fastexact}]
    Apply \autoref{lm:warmup:Ek_leq_1/k} with the minimizer $(f^*, g^*)$ exhibited in \autoref{prop:warmup:good_minimizer}, $k_0=1$, and the constant $D$ identified in \autoref{lm:warmup:easy_D}.
\end{proof}

\section{Fast polynomial bound in the asymptotically scalable case} \label{sec:fastasymp}

Suppose henceforth that $A$ is asymptotically scalable (but not necessarily exactly scalable), i.e., that $\inf \Psi > -\infty$ but $\Psi$ may not attain its infinimum at any pair of finite vectors.
Then the analysis presented in the previous section cannot be applied directly, as the ``main body of the analysis'' \autoref{lm:warmup:convexity_ineq}, \autoref{lm:warmup:Psik_leq_1/k}, and \autoref{lm:warmup:Ek_leq_1/k} assumed the existence of a minimizer of $\Psi$.
Nonetheless, the analysis can be adapted by using approximate minimizers instead.

To state our main result in its tightest form, let us recall the following structural result about the asymptotically scalable case.
Its earliest occurrence in the context of the Sinkhorn algorithm and matrix scaling, to our knowledge, is \cite[Lemma~C.3]{allen2017much},
and its connection to the Dulmage-Mendelsohn decomposition of bipartite graphs was pointed out in \cite{hayashi2024finding}.

\begin{proposition}[{Generalized Dulmage-Mendelsohn decomposition \cite[Lemma~C.3]{allen2017much}}] \label{prop:fastasymp:DM_decomp} 
    Suppose that $A$ is asymptotically scalable.
    Recall that $\EEE = \left\{ (i,j);~ A_{ij}>0 \right\}$.
    Then there exists an integer $P$ and partitions $\{1 \dots m \} = I_1 \sqcup ... \sqcup I_P$ and $\{1 \dots n \} = J_1 \sqcup ... \sqcup J_P$ such that
    \begin{itemize}
        \item For all $p \leq P$, 
        $\sum_{i \in I_p} \mu_i = \sum_{j \in J_p} \nu_j$
        and $A^{(p)} = (A_{ij})_{i \in I_p, j \in J_p}$ is exactly $(\restr{\mu}{I_p}, \restr{\nu}{J_p})$-scalable.
        \item Denoting by ``$\to$'' the relation on $\{1 \dots P\}$ given by
        $p \to q \iff \exists i \in I_p, j \in J_q ~\text{s.t.}~ A_{ij}>0$ and $p \neq q$,
        the directed graph $(\{1 \dots P\}, \{ (p,q);~ p \to q \} )$ is a directed acyclic graph (DAG).
        This DAG is connected provided that $\EEE$ is connected.
    \end{itemize}
\end{proposition}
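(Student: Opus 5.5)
The plan is to build the decomposition from the structure of optimal (feasible) couplings of the EOT problem. Asymptotic scalability means there is some $\pi \in \Delta_\EEE$ with $X_\sharp \pi = \mu$ and $Y_\sharp \pi = \nu$. The set of such feasible couplings is a nonempty convex polytope. Taking a relative-interior point $\bar\pi$, for instance the average of the vertices, gives the maximal support $\SSS = \{(i,j) \in \EEE;~ \bar\pi_{ij}>0\}$: every feasible coupling has its support contained in $\SSS$. Let $I_1 \sqcup J_1, \dots, I_P \sqcup J_P$ be the connected components of the bipartite graph $(\{1\dots m\}\sqcup\{1\dots n\}, \SSS)$. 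Every vertex has positive mass, so it meets some edge of $\SSS$, and hence each component contains at least one $i$ and at least one $j$.

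First I check the block property. Restricting $\bar\pi$ to $I_p \times J_p$ gives a coupling whose marginals are $\restr{\mu}{I_p}$ and $\restr{\nu}{J_p}$, because $\bar\pi$ puts no mass between different components. In particular $\sum_{I_p}\mu_i = \sum_{J_p}\nu_j$. This restricted coupling charges every edge of $\SSS \cap (I_p\times J_p)$. However, exact scalability of $A^{(p)}$ requires a feasible coupling charging all edges of $\EEE \cap (I_p\times J_p)$, which may be larger than $\SSS\cap(I_p\times J_p)$. I rule this out with the same argument as for the DAG property below: an edge of $\EEE$ inside a block that every feasible coupling leaves empty would lead to a contradiction. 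Once that is shown, $A^{(p)}$ is exactly scalable by the equivalence stated in the introduction.

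The main obstacle is acyclicity, together with the claim just mentioned. I would use LP duality and complementary slackness for the feasibility problem. An edge $e \in \EEE$ lies outside $\SSS$ exactly when $\max\{\pi_e : \pi \text{ feasible}\} = 0$. By Farkas' lemma this is equivalent to the existence of potentials $(u,v)$ with $u_i + v_j \geq 0$ on all of $\EEE$, with strict inequality at $e$, and with $\mu^\top u + \nu^\top v = 0$. Complementary slackness with $\bar\pi$ then forces $u_i + v_j = 0$ on $\SSS$. Since each component is connected in $\SSS$, there is a constant $c_p$ with $u = c_p$ on $I_p$ and $v = -c_p$ on $J_p$. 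It follows that $p \to q$ forces $c_p - c_q \geq 0$, and that an edge of $\EEE\setminus\SSS$ strictly violating the equality forces $c_p > c_q$.

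Now I sum the separate certificates obtained for each $e \in \EEE\setminus\SSS$. The result is a single potential $c$ with $c_p > c_q$ whenever some edge of $\EEE\setminus\SSS$ goes from $I_p$ to $J_q$. Every edge realizing $p \to q$ with $p\neq q$ is such an edge, so $c$ strictly decreases along arrows, and the relation $\to$ has no directed cycle. The same $c$ also handles the within-block claim. An edge of $\EEE\setminus\SSS$ inside block $p$ would require $c_p > c_p$, which is impossible. Hence $\EEE \cap (I_p\times J_p) = \SSS\cap(I_p\times J_p)$, and $A^{(p)}$ is exactly scalable.

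Finally, connectedness of the DAG follows directly from connectedness of $\EEE$. Any two blocks are joined by a path in $(\{1\dots m\}\sqcup\{1\dots n\},\EEE)$. Each edge of that path either stays inside one block or yields a relation $p \to q$ between consecutive blocks, so the underlying undirected graph of the DAG is connected.
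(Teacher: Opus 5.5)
Your proof is correct. The paper gives no proof of this proposition: it imports it from \cite[Lemma~C.3]{allen2017much}, and points to \cite{hayashi2024finding} for the link with the classical Dulmage--Mendelsohn decomposition. So yours is a self-contained alternative rather than a reconstruction of an argument in the text.

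You define the blocks canonically as the connected components of the maximal support $\mathcal{S}$ of feasible couplings. LP duality then attaches to each edge of $\EEE\setminus\mathcal{S}$ a potential that is constant on blocks. Summing these certificates gives a single block potential $c$ that strictly decreases along every arrow, which proves acyclicity. For $\EEE\cap(I_p\times J_p)\subset\mathcal{S}$ one certificate already suffices, since it would force $c_p-c_p>0$. Exact scalability of the blocks should be read after normalizing the block marginals by $\sum_{I_p}\mu_i$, to match the paper's probability-vector convention; this is harmless.

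What your route buys is a canonical decomposition together with an explicit dual object. A suitably rescaled $-c$ is an admissible choice of the offsets $t_p$ (increasing along arrows) used in \autoref{sec:apx_delayedpf_GAM}. However, nothing controls the ratio between the spread of $c$ and its smallest gap along arrows, and that ratio is what enters the variation bound. This is why the paper builds its offsets as $t_p=\rho\,\ell_p$ from longest-path lengths, which gives gap at least $\rho$ and spread exactly $\rho\ell$. The citation, for its part, keeps the paper short and anchors the structure in the matrix-scaling literature.

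If you prefer to avoid duality altogether, there is an elementary variant. Take a directed cycle of blocks, or an edge of $\EEE\setminus\mathcal{S}$ inside a block, and close it up with simple $\mathcal{S}$-paths inside the blocks. This gives an alternating cycle along which $\bar\pi$ can be perturbed by $\pm\delta$ without changing its marginals. The perturbation charges an edge outside $\mathcal{S}$, contradicting maximality of the support.
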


The main result of this paper is as follows.
The remainder of the section is dedicated to its proof.

\begin{theorem} \label{thm:fastasymp:fastasymp}
    Suppose that $A$ is asymptotically scalable.
    Let $\ell$ denote the diameter of the DAG $(\{1 \dots P\}, \{ (p,q);~ p \to q \} )$ constructed in \autoref{prop:fastasymp:DM_decomp}, i.e., the maximal length of a path; in particular, $\ell \leq P \leq \min(m,n)$.
    Let $K, \Delta, \ul\theta$ be defined as in \eqref{eq:warmup:def_K_theta}, \eqref{eq:warmup:def_Delta}, \eqref{eq:warmup:def_ultheta}; that is, for ease of reference,
    \begin{align}
        K &= \max_{(i,j) \in \EEE} C_{ij}
        - \tau \log \left( \mu_{\min} \vee \nu_{\min} \right),
        \qquad\qquad
        \ul\theta = \min_{ij} C_{ij}, \\
        \Delta &= \min_{\substack{I \subset \{1 \dots m\} \\ J \subset \{1 \dots n\}}}~
        \abs{\sum_{i \in I} \mu_i - \sum_{j \in J} \nu_j}
        ~~\text{subject to}~~
        \sum_{i \in I} \mu_i \neq \sum_{j \in J} \nu_j.
    \end{align}
    Further suppose the Sinkhorn algorithm is initialized with $g^0 = 0$.
    Then the $\ell_1$-norm marginal error
    $E_k = \norm{X_\sharp \pi^k - \mu}_1 + \norm{Y_\sharp \pi^k - \nu}_1$
    and the dual suboptimality $V_k = \Psi(f^k, g^k) - \inf \Psi$
    are bounded as 
    \begin{equation}
        \forall k \geq 2 e \ell^2 + 3,~~
        E_k 
        \leq \sqrt{\frac{8 V_{\ceil{k/2}}}{\tau k}}
        \leq \frac{4 \sqrt{2}}{\sqrt{k (k-2)}} \left[ 
            \frac{K-\ul\theta}{\tau} \left( \ell + \frac{2 (\ell+1)}{\Delta} \right)
            + \ell\, \log \frac{k-2}{2 \ell^2}
        \right].
    \end{equation}
\end{theorem}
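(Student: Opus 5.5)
The plan has three parts: replace the missing minimizer by a family of approximate minimizers, rerun the main body of \autoref{sec:warmup} against them, and bound the resulting recursion.

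\textbf{Step 1: approximate minimizers.} Since $\Psi$ need not attain its infimum, I use the decomposition of \autoref{prop:fastasymp:DM_decomp}. Let $\lambda:\{1\dots P\}\to\{0,\dots,\ell\}$ be a level function of the DAG, chosen so that $p\to q$ implies $\lambda(p)<\lambda(q)$; the longest-path length serves this purpose, with the orientation picked to make the cross blocks vanish. For each block $p$, \autoref{prop:warmup:good_minimizer} applied to $A^{(p)}$ with marginals $\restr{\mu}{I_p},\restr{\nu}{J_p}$ gives a minimizer with variation at most $(K-\ul\theta)/\Delta$. Three facts are needed here:
\begin{itemize}
\item the block masses are not normalized, so the constants must be re-derived;
\item $\Delta$ only decreases under restriction to sub-blocks, since the block masses match;
\item $\theta$ can be replaced by $\ul\theta$ by \autoref{rk:warmup:ultheta}.
\end{itemize}
Then, for a parameter $t>0$, set $f^{(t)}_i=f^{*,p}_i+t\lambda(p)$ on $I_p$ and $g^{(t)}_j=g^{*,q}_j-t\lambda(q)$ on $J_q$. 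Diagonal blocks are unchanged, while cross blocks $p\to q$ are multiplied by $e^{-t(\lambda(q)-\lambda(p))/\tau}\le e^{-t/\tau}$. I expect $\Psi(f^{(t)},g^{(t)})-\inf\Psi\lesssim \tau\, e^{-t/\tau}$ and $\norm{f^{(t)}}_{\var},\norm{g^{(t)}}_{\var}\lesssim \ell t+\ell(K-\ul\theta)+(\ell+1)(K-\ul\theta)/\Delta$. Gluing the block minimizers also involves relative additive constants, which each cost up to $K-\ul\theta$; this is where the $\ell$ and $(\ell+1)$ factors come from. For the upper bound on $\inf\Psi$ via a block-diagonal coupling, I plan to compare with the primal EOT: its feasible solution is supported on the diagonal blocks, which pins down $\inf\Psi$ as the sum of the block optima.

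\textbf{Step 2: main body with an approximate minimizer.} \autoref{lm:warmup:convexity_ineq} holds for any comparison point, giving $\Psi(f^k,g^k)-\Psi(f^{(t)},g^{(t)})\le D_k(t)E_k$. For the iterate distances, \autoref{lm:warmup:easy_D} no longer applies, because $(f^{(t)},g^{(t)})$ is not a fixed point of the half-updates. Instead I bound $\norm{f^k}_{\var}$ and $\norm{g^k}_{\var}$ directly, starting from $g^0=0$. The intended route is to show inductively that the iterates keep the layered form, that is, on $I_p$ they equal a bounded term plus $s_k\lambda(p)$ with $s_k$ growing at most like $\tau\log k$. I would prove this via monotonicity of $\Psi(f^k,g^k)\le\Psi(f^1,g^1)$, which bounds each block's exponent. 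This is the main obstacle. A fallback is to take $D_k=\norm{f^k-f^{(t)}}_{\var}\vee\norm{g^k-g^{(t)}}_{\var}$ and bound it by $O(\ell(t+K-\ul\theta)/\Delta)$ plus an $\ell\tau\log k$-type drift, using \autoref{lm:intro:varseminorm_nonexpansive} around exact block-fixed points.

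\textbf{Step 3: recursion and choice of $t$.} Writing $V_k$ for the suboptimality, the bound from Step 2 gives $(V_k-\epsilon_t)_+^2\le \frac{2D^2}{\tau}(V_k-V_{k+1})$ with $\epsilon_t=\tau e^{-t/\tau}$. A variant of \autoref{lm:warmup:Vk_Ak} then yields $V_k\le \epsilon_t+\frac{2D(t)^2}{\tau(k-1)}$. I choose $t=\tau\log\frac{k-2}{2\ell^2}$, so that the error term $\epsilon_t\approx 2\ell^2\tau/(k-2)$ is dominated. This choice requires $\frac{k-2}{2\ell^2}\ge e$, which explains the hypothesis $k\ge 2e\ell^2+3$. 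Finally I apply the first inequality of \autoref{lm:warmup:Ek_leq_1/k}, which needs only $\min\Psi$ replaced by $\inf\Psi$, at index $\ceil{k/2}$. Taking square roots gives the stated $\frac{4\sqrt2}{\sqrt{k(k-2)}}\bigl[\frac{K-\ul\theta}{\tau}(\ell+\frac{2(\ell+1)}{\Delta})+\ell\log\frac{k-2}{2\ell^2}\bigr]$.
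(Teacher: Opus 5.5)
The gap is Step~2. You flag it as the main obstacle and leave it open, and neither route you propose can supply the $k$-independent constant $D$ that your Step~3 recursion needs.

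With a \emph{fixed} comparison point $(f^{(t)},g^{(t)})$, no such $D$ exists at all. Since $V_k\to 0$ while $\inf\Psi$ is not attained, $\norm{g^k}_{\var}\to\infty$: a bounded subsequence, normalized by the harmless shifts $g\mapsto g+b\bmone_n$, would have a limit $g^\infty$ with $\Psi(f[g^\infty],g^\infty)=\inf\Psi$. Hence $\norm{g^k-g^{(t)}}_{\var}\to\infty$, and you are forced into a quantitative drift estimate for the true iterates.

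The tool you name for that estimate is too weak. Together with $\pi^k_{ij}\le 1$, the bound $\Psi(f^k,g^k)\le\Psi(f^1,g^1)$ only confines the iterates to a polyhedron $\{f_i+g_j\le c \text{ on } \EEE,\ \mu^\top f+\nu^\top g\ge c'\}$. This polyhedron is unbounded whenever $A$ is not exactly scalable. In the example of \autoref{rk:fastasymp:soules}, increasing $f_2$ while decreasing $g_2$ is a recession direction, and it is exactly the direction along which the iterates run off, like $\tau\log k$. So monotonicity of $\Psi$ gives no rate for your $s_k$.

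The fallback fails as well. \autoref{lm:intro:varseminorm_nonexpansive} makes the distance between two \emph{Sinkhorn trajectories} non-increasing, and a static point is a trajectory only if it is a fixed point of $f[\cdot]$ and $g[\cdot]$. The glued block minimizers are not fixed points, because the cross-block edges enter the full updates.

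The missing idea, which is the content of \autoref{lm:fastasymp:Psik_Ek_leq_Q}, is to compare with a \emph{moving} point: the Sinkhorn trajectory $(\hat f^k,\hat g^k)$ initialized at $\hat g^0=\hat g$. Both trajectories diverge, but their difference does not:
\begin{itemize}
\item non-expansiveness gives $\sup_{k\ge1}\norm{f^k-\hat f^k}_{\var}\vee\norm{g^k-\hat g^k}_{\var}\le\norm{\hat g-g^0}_{\var}$;
\item \autoref{lm:warmup:convexity_ineq}, which never used minimality, gives $\Psi(f^k,g^k)-\Psi(\hat f^k,\hat g^k)\le\norm{\hat g}_{\var}E_k$;
\item $\Psi(\hat f^k,\hat g^k)\le\Psi(f[\hat g],\hat g)$, since Sinkhorn is alternating minimization.
\end{itemize}
This yields exactly your recursion with $D=\norm{\hat g}_{\var}$ and $\epsilon=\Psi(f[\hat g],\hat g)-\inf\Psi$, i.e.\ $V_k\le Q(\alpha_k)$. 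No information about the structure of $(f^k,g^k)$ is needed.

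Separately, your constants in Steps~1 and~3 do not give the stated bound. With slope $t$, the suboptimality bound of \autoref{lm:apx_delayedpf_GAM:tp_tq} is $\tau e^{-t/\tau}e^{\tau^{-1}(K-\ul\theta)(1+2/\Delta)}$, not $\tau e^{-t/\tau}$. So with your $t=\tau\log\frac{k-2}{2\ell^2}$, the error term carries a factor exponential in $1/\tau$ and is not dominated. You must instead take $t=\tau\log(\tau/\eps)+(K-\ul\theta)(1+2/\Delta)$. It is then $\frac{\ell}{2}t+\frac{K-\ul\theta}{\Delta}$, with the $\frac12$ coming from the variation semi-norm being half the range, that gives $\norm{\hat g}_{\var}\le\frac{\tau\ell}{2}\log(\tau/\eps)+M$. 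Gluing constants play no role. This is what produces the coefficients $\ell+\frac{2(\ell+1)}{\Delta}$ and $\ell$ in the theorem.
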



\subsection{Reduction to estimating the rate function}

Define the \emph{rate function}
\cite{chizat2022convergence}
\begin{equation} \label{eq:fastasymp:def_Q}
    \forall \alpha \geq 0,~~
    Q(\alpha) = \inf_{\hat g \in \RR^n} \left[ \Psi(f[\hat g], \hat g) - \inf \Psi + \alpha \norm{\hat g-g^0}_{\var}^2 \right]
\end{equation}
where we recall that $\norm{\cdot}_{\var}$ denotes the variation semi-norm defined in \eqref{eq:intro:def_var}.

\begin{lemma} \label{lm:fastasymp:Psik_Ek_leq_Q}
    For any $\hat g \in \RR^n$,
    \begin{equation}
        \forall k \geq 2,~
        \Psi(f^k, g^k) - \Psi(f[\hat g], \hat g)
        \leq \frac{2 \norm{\hat g - g^0}_{\var}^2}{\tau (k-1)}.
    \end{equation}
    As a consequence,
    \begin{equation}
        \forall k \geq 2,~
        \Psi(f^k, g^k) - \inf \Psi
        \leq Q(\alpha_k)
        \qquad \text{where} \qquad
        \alpha_k = \frac{2}{\tau (k-1)}.
    \end{equation}
    Moreover,
    \begin{equation}
        \forall k \geq 3,~~
        E_k^2 
        \leq \frac{8}{\tau k} \left( \Psi(f^{\ceil{k/2}}, g^{\ceil{k/2}}) - \inf \Psi \right)
        \leq \frac{8}{\tau k} \,
        Q(\alpha_{\ceil{k/2}}).
    \end{equation}
\end{lemma}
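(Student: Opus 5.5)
The plan is to redo the warm-up analysis with the minimizer $(f^*,g^*)$ replaced by the reference pair $(\hat f,\hat g)$, where $\hat f = f[\hat g]$. We only use convexity, so no minimizer is needed. Since $\hat f = f[\hat g]$, we get $\hat g' := g[\hat f]$ and $Z(\hat f,\hat g)=1$. However, $(\hat f,\hat g)$ is not a fixed point, so the non-expansiveness argument of \autoref{lm:warmup:easy_D} must be adapted.

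\emph{Step 1: convexity inequality.} Exactly as in \autoref{lm:warmup:convexity_ineq}, for any $k\ge1$,
\begin{equation}
\Psi(f^k,g^k)-\Psi(\hat f,\hat g)\le \left(\norm{f^k-\hat f}_{\var}\vee\norm{g^k-\hat g}_{\var}\right)E_k .
\end{equation}

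\emph{Step 2: bound the distance term.} This is the main obstacle. We want $\norm{f^k-\hat f}_{\var}\vee\norm{g^k-\hat g}_{\var}\le \norm{\hat g-g^0}_{\var}$, at least for the relevant $k$. One half is immediate: for the odd iterate $f^1=f[g^0]$, \autoref{lm:intro:varseminorm_nonexpansive} gives $\norm{f^1-\hat f}_{\var}\le\norm{g^0-\hat g}_{\var}$. But $\hat g$ is not $g[\hat f]$, so the chain breaks at later steps.

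First, I would avoid the chain entirely by comparing against the time-varying Sinkhorn trajectory started at $\hat g$. Call its iterates $(\hat f^k,\hat g^k)$. Non-expansiveness gives $\norm{f^k-\hat f^k}_{\var},\norm{g^k-\hat g^k}_{\var}\le\norm{g^0-\hat g}_{\var}=:D$ for all $k$. Convexity at $(f^k,g^k)$ against $(\hat f^k,\hat g^k)$ then gives
\begin{equation}
\Psi(f^k,g^k)-\Psi(\hat f^k,\hat g^k)\le D E_k ,
\end{equation}
and $\Psi(\hat f^k,\hat g^k)\le\Psi(\hat f,\hat g)$ by monotonicity (\autoref{lm:intro:Psik-Psik+1}). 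Hence, with $W_k=\Psi(f^k,g^k)-\Psi(f[\hat g],\hat g)$, we get $W_k\le D E_k$ whenever $W_k\ge0$.

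\emph{Step 3: recursion.} With Pinsker's inequality and \autoref{lm:intro:marginals_match}, as in \autoref{lm:warmup:Psik_leq_1/k}, we get $W_k^2\le \frac{2D^2}{\tau}(W_k-W_{k+1})$ for $k\ge1$ whenever $W_k\ge0$. \autoref{lm:warmup:Vk_Ak}, applied from index $1$ and handling the nonpositive case as in its proof, gives $W_k\le\frac{2D^2}{\tau(k-1)}$ for $k\ge2$. This is the first claim. Taking the infimum over $\hat g$ after subtracting $\inf\Psi$ gives $V_k\le Q(\alpha_k)$.

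\emph{Step 4: marginal error.} The first inequality of \autoref{lm:warmup:Ek_leq_1/k} only used \autoref{lm:intro:Psik-Psik+1}, \autoref{lm:intro:sqgrad_decr} and Pinsker's inequality. It therefore holds verbatim with $\min\Psi$ replaced by $\inf\Psi$, since $V_k\ge0$ still. For $k\ge3$ we have $\ceil{k/2}\ge2$, so Step 3 applies at index $\ceil{k/2}$, which yields the final chain of inequalities.
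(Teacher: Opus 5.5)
Your proposal is correct and is essentially the paper's proof. You compare against the Sinkhorn trajectory $(\hat f^k,\hat g^k)$ started at $\hat g$, and use non-expansiveness to get the uniform bound $\norm{\hat g-g^0}_{\var}$. You then combine convexity, monotonicity of $\Psi(\hat f^k,\hat g^k)$ (with $(\hat f^1,\hat g^1)=(f[\hat g],\hat g)$), Pinsker, and \autoref{lm:warmup:Vk_Ak} shifted by one index, and finish with the first part of \autoref{lm:warmup:Ek_leq_1/k}.

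Your Step~1 is superfluous since you abandon it. Your explicit treatment of the case $W_k<0$, via monotonicity of $\Psi(f^k,g^k)$, is slightly more careful than the paper, which squares $W_k$ and invokes \autoref{lm:warmup:Vk_Ak} without checking the sign.
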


\begin{proof}
    Denote by $(\hat f^k, \hat g^k)_{k \geq 0}$ the iterates of the Sinkhorn algorithm initialized with $\hat g^0 = \hat g$.
    By \autoref{lm:intro:varseminorm_nonexpansive},
    the sequence 
    $u_k = \begin{cases}
        \norm{f^k-\hat f^k}_{\var} ~\text{if $k$ is odd} \\
        \norm{g^k-\hat g^k}_{\var} ~\text{if $k$ is even}
    \end{cases}$
    is non-increasing, so
    \begin{equation}
        \sup_{k \geq 1}
        \norm{f^k-\hat f^k}_{\var} \vee \norm{g^k-\hat g^k}_{\var}~
        \leq u_0
        = \norm{\hat g - g^0}_{\var}.
    \end{equation}
    
    Now observe that in the proof of \autoref{lm:warmup:convexity_ineq},
    we never actually used the assumption that $(f^*, g^*)$ was a minimizer of $\Psi$.
    So by applying the same arguments with $(f^*, g^*)$ replaced by $(\hat f^k, \hat g^k)$, we get
    \begin{equation}
        \forall k \geq 1,~~
        \Psi(f^k, g^k) - \Psi(\hat f^k, \hat g^k)
        \leq \left( \norm{f^k-\hat f^k}_{\var} \vee \norm{g^k - \hat g^k}_{\var} \right)
        E_k
        \leq \norm{\hat g - g^0}_{\var} E_k.
    \end{equation}
    Then, by applying the same arguments as in the proof of the first part of \autoref{lm:warmup:Psik_leq_1/k}, we have
    \begin{equation}
        \left[ \Psi(f^k, g^k) - \Psi(f[\hat g], \hat g) \right]^2 
        \leq \left[ \Psi(f^k, g^k) - \Psi(\hat f^k, \hat g^k) \right]^2
        \leq \frac{2 \norm{\hat g - g^0}_{\var}^2}{\tau} \left( \Psi(f^k, g^k) - \Psi(f^{k+1}, g^{k+1}) \right)
    \end{equation}
    where for the first inequality we additionally used that $(\Psi(\hat f^k, \hat g^k))_{k \geq 0}$ is non-increasing, by definition of the Sinkhorn algorithm as an alternating minimization scheme.
    So by applying \autoref{lm:warmup:Vk_Ak} to $V_k = \Psi(f^k, g^k) - \Psi(f[\hat g], \hat g)$, we obtain that
    \begin{equation}
        \forall k \geq 2,~
        \Psi(f^k, g^k) - \Psi(f[\hat g], \hat g)
        \leq \frac{2 \norm{\hat g - g^0}_{\var}^2}{\tau} \frac{1}{k-1},
    \end{equation}
    as desired.

    To deduce the second inequality of the lemma, simply rewrite the first inequality of the lemma~as
    \begin{equation}
        \forall \hat g, \forall k \geq 2,~~
        \Psi(f^k, g^k) - \inf \Psi
        \leq \Psi(f[\hat g], \hat g) - \inf \Psi + \frac{2 \norm{\hat g - g^0}_{\var}^2}{\tau (k-1)}
    \end{equation}
    and take the infimum over $\hat g$ on the right-hand side, $k$ being fixed.

    The third inequality of the lemma follows by the first part of \autoref{lm:warmup:Ek_leq_1/k}, since one can check that its proof did not use the existence of a minimizer of $\Psi$.
\end{proof}

\subsection{Existence of good approximate minimizers and rate function estimate}

Thanks to \autoref{lm:fastasymp:Psik_Ek_leq_Q}, to obtain a convergence upper bound for the algorithm, it suffices to control the growth of the rate function
$Q(\alpha) = \inf_{\hat g \in \RR^n} \left[ \Psi(f[\hat g], \hat g) - \inf \Psi + \alpha \norm{\hat g-g^0}_{\var}^2 \right]$
for $\alpha$ small.
In other words, we wish to show the existence of approximate minimizers $\hat g$ that simultaneously have a low variation semi-norm (assuming $g^0=0$).
This is done in the next proposition, which is an adapted and refined version of \cite[Lemma~3.3]{allen2017much}.

\begin{proposition} \label{prop:fastasymp:good_approx_minimizer}
    For any $0 < \eps \leq \tau\, e^{\tau^{-1} (K-\ul\theta) (1+2/\Delta)}$, there exists $\hat g_\eps \in \RR^m$ such that
    \begin{equation}
        \Psi(f[\hat g_\eps], \hat g_\eps) - \inf \Psi \leq \eps
        \qquad \text{and} \qquad 
        \norm{\hat g_\eps}_{\var}
        \leq
        \frac{\tau \ell}{2} \log (\tau/\eps)
        + (K-\ul\theta)
        \left( \frac{\ell}{2} + \frac{1 + \ell}{\Delta} \right)
    \end{equation}
    where $\ell, K, \Delta, \ul\theta$ are the quantities defined in the statement of \autoref{thm:fastasymp:fastasymp}.
\end{proposition}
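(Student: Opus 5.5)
The plan is to build $\hat g_\eps$ blockwise from the generalized Dulmage--Mendelsohn decomposition (\autoref{prop:fastasymp:DM_decomp}), in the spirit of \cite[Lemma~3.3]{allen2017much}.

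\textbf{Step 1: blockwise minimizers.} For each block $p$, the submatrix $A^{(p)}$ is exactly scalable with marginals $\restr{\mu}{I_p}, \restr{\nu}{J_p}$, both of total mass $m_p := \sum_{I_p}\mu_i$. After renormalizing these marginals to probability vectors, I apply \autoref{prop:warmup:good_minimizer} to each block. This gives a normalized blockwise minimizer $(f^{(p)}, g^{(p)})$ with $\norm{g^{(p)}}_{\var} \lesssim (K-\ul\theta)/\Delta$ and $\norm{g^{(p)}}_\infty \lesssim K/2 + (K-\ul\theta)/\Delta$. I must check that the block quantities $K_p, \theta_p, \Delta_p$ are controlled by the global $K, \ul\theta, \Delta$. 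For $\Delta$ this holds because subsets of $I_p, J_p$ are subsets of the full index sets. For $K$ there is a possible $\log m_p$ issue from the renormalization, and I will absorb it into $\theta_p \ge \ul\theta$ or into $K$.

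\textbf{Step 2: assemble along the DAG.} Give each block a level $h(p) \in \{0,\dots,\ell\}$ equal to its depth in the DAG, oriented so that along every edge $p \to q$ the level strictly changes in a fixed direction. Set
\[
\hat g_j = g^{(p)}_j - \lambda\, h(p) \quad \text{for } j \in J_p,
\]
with a correspondingly shifted $\hat f$ (or directly $f[\hat g]$). The cross-block entries $(i,j)$ with $i \in I_p$, $j \in J_q$, $p \to q$, then receive an exponential damping factor $e^{-\lambda(h(q)-h(p))/\tau} \le e^{-\lambda/\tau}$. The sign convention must be chosen so that all off-diagonal mass is damped rather than amplified.

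Next I compare $\Psi(\hat f, \hat g)$ with $\inf \Psi$. Summing the block-diagonal terms with the block minimizers gives $\sum_p m_p \Psi_p^* - \sum_p m_p\tau\log m_p$-type terms, which should equal a lower bound on $\inf\Psi$. I expect to obtain this lower bound by restricting any feasible coupling to its block-diagonal part, using the DAG structure: cross-block mass must vanish for feasible couplings. The off-diagonal terms contribute at most $\tau \log(1 + \text{(mass)}\, e^{(\text{osc}-\lambda)/\tau})$, where the mass is bounded via the sup-norm bounds of Step 1 and is at most $e^{(K-\ul\theta)(1+2/\Delta)/\tau} e^{-\lambda/\tau}$. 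Choosing
\[
\lambda = \tau\log(\tau/\eps) + (K-\ul\theta)(1+2/\Delta)
\]
makes the excess at most $\tau \cdot (\eps/\tau) = \eps$. The hypothesis on $\eps$ exactly ensures $\lambda \ge 0$. Replacing $\hat f$ by $f[\hat g]$ only decreases $\Psi$, since $f[\cdot]$ is an alternating minimization step.

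\textbf{Step 3: variation norm.} I bound
\[
\norm{\hat g}_{\var} \le \tfrac12\big(\lambda \ell + \max_p \max g^{(p)} - \min_p \min g^{(p)}\big).
\]
The spread across blocks is controlled using the $\infty$-norm bounds together with the fact that the blocks are normalized. The target constant $\frac{\ell}{2}\lambda + (K-\ul\theta)\frac{1}{\Delta}$ requires the across-block spread of the $g^{(p)}$ to be at most $2(K-\ul\theta)/\Delta$ in $\max-\min$. I would get this by shifting each $(f^{(p)}, g^{(p)})$ by $(b_p, -b_p)$, which preserves block normalization, so as to center the $g^{(p)}$ at a common value.

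\textbf{Main obstacle.} I expect the hardest step to be the exact accounting in Step 2: proving $\inf\Psi$ equals the sum of the block optima, with the correct $\log m_p$ normalization terms, and controlling the off-diagonal mass uniformly. In particular, the interplay between the block shifts $b_p$ and the sup-norm bounds, which is needed to get the constants $\ell/2 + (1+\ell)/\Delta$, is delicate.
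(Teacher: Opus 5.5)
Your plan is the paper's proof. It uses normalized block minimizers from \autoref{prop:warmup:good_minimizer}, offsets $t_p=\rho\,\ell_p$ with $\ell_p$ the length of the longest path ending at $p$, and the same $\rho=\tau\log(\tau/\eps)+(K-\ul\theta)(1+2/\Delta)$. It then bounds $\Psi(\hat f,\hat g)-\inf\Psi\le\tau(\hat Z-1)$, controlling the cross-block entries through sup-norm bounds, and finishes with $\Psi(f[\hat g],\hat g)\le\Psi(\hat f,\hat g)$.

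The two points you flag are milder than you fear.

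\textbf{Lower bound on $\inf\Psi$.} Only $\inf\Psi\ge\Psi(\hat f,\hat g)-\tau\log\hat Z$ is needed. The paper gets it by Jensen against the single feasible block-diagonal coupling (\autoref{lm:apx_delayedpf_GAM:ub_Psi-infPsi}). So you do not need to show that every feasible coupling is block-diagonal, and the DAG plays no role in this step.

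\textbf{Step~3 recentering.} Your concern is legitimate: the paper's inequality $\norm{\hat f_\eps}_{\var}\le\max_p\norm{f^{*p}}_{\var}+\frac12(\max_p t_p-\min_p t_p)$ tacitly assumes a common center as well. Your fix is compatible with Step~2. By \autoref{lm:warmup:kalantari08_thm51}, the block minimizers lie within $(K-\ul\theta)/\Delta$ of block-dependent centers that are at most $K/2$. With your renormalization, this holds once you undo it by splitting the shift $-\tau\log m_p$ evenly between $f$ and $g$, which also disposes of your $\log m_p$ worry. Moving every $g^{(p)}$ up to the common center $K/2$ therefore only lowers the corresponding $f^{(p)}$. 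This preserves $f^{(p)}_i+g^{(q)}_j\le K+2(K-\ul\theta)/\Delta$, so the cross-block bound in Step~2 is unaffected.
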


The proof of this proposition is technical and is delayed to \autoref{sec:apx_delayedpf_GAM}.

The above existence result translates to the following estimate on the growth of the rate function.

\begin{lemma} \label{lm:fastasymp:ub_Q}
    Suppose $g^0=0$. 
    Let $M = (K-\ul\theta) \left( \frac{\ell}{2} + \frac{1 + \ell}{\Delta} \right)$.
    Then the rate function $Q(\alpha)$ defined in \eqref{eq:fastasymp:def_Q} satisfies
    \begin{equation}
        \forall 0< \alpha \leq
        \frac{
            e^{\tau^{-1} (K-\ul\theta) (1+2/\Delta)}
        }{
            e \, \tau \ell \, \max\left\{
                \ell/2, 
                \frac{K-\ul\theta}{\tau \Delta}
            \right\}
        }, \quad
        Q(\alpha) \leq \alpha \tau^2
        \left[ 
            \frac{2 M}{\tau}
            + \ell\, \log \frac{2}{\alpha \tau \ell^2}
        \right]^2.
    \end{equation}
\end{lemma}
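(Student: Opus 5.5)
The plan is to plug the approximate minimizers of \autoref{prop:fastasymp:good_approx_minimizer} into the definition of $Q$ and then optimize over $\eps$. Since $g^0=0$, for every admissible $\eps$ we have
\begin{equation}
    Q(\alpha) \leq \Psi(f[\hat g_\eps], \hat g_\eps) - \inf \Psi + \alpha \norm{\hat g_\eps}_{\var}^2
    \leq \eps + \alpha \left( M + \frac{\tau \ell}{2} \log(\tau/\eps) \right)^2 .
\end{equation}
The natural choice balances the two terms up to logarithms. I would take $\eps = \alpha \tau^2 \ell^2 / 2$, so that $\log(\tau/\eps) = \log \frac{2}{\alpha \tau \ell^2}$.

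Substituting this choice, the second term becomes
\begin{equation}
    \alpha \frac{\tau^2}{4}\left[ \frac{2M}{\tau} + \ell \log\frac{2}{\alpha\tau\ell^2} \right]^2 .
\end{equation}
It then remains to absorb the additive $\eps = \alpha\tau^2\ell^2/2$ into the remaining $\frac34$ fraction of $\alpha\tau^2[\cdots]^2$. For this it suffices that the bracket is at least of order $\ell$, for instance $\ell\log\frac{2}{\alpha\tau\ell^2} \ge \ell$. That holds as soon as $\alpha \le \frac{2}{e\tau\ell^2}$, which the hypothesized upper bound on $\alpha$ should imply thanks to the factor $e\,\tau\ell\cdot\ell/2$ in its denominator. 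A cruder alternative is to use $M \ge (K-\ul\theta)\ell/2$, but that route is weaker if $K=\ul\theta$, so I would rely on the logarithm term.

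Next I would check admissibility, $\eps \le \tau e^{\tau^{-1}(K-\ul\theta)(1+2/\Delta)}$. With $\eps = \alpha\tau^2\ell^2/2$ this reads $\alpha \le \frac{2 e^{\tau^{-1}(K-\ul\theta)(1+2/\Delta)}}{\tau\ell^2}$. This follows from the stated bound on $\alpha$, since $e\,\tau\ell\max\{\ell/2,\cdot\} \ge e\tau\ell^2/2$ and $e > 1/4$.

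The main obstacle is the bookkeeping of the constants. The condition on $\alpha$ involves a mysterious $\max\{\ell/2,\frac{K-\ul\theta}{\tau\Delta}\}$. This suggests the author's choice of $\eps$ may differ from mine, perhaps scaling with $M$ as well, for instance to keep $\log(\tau/\eps)\ge 1$ or to make the bracket dominate. If my choice of $\eps$ fails to give exactly the stated constant, I would adjust $\eps$ by a constant factor, such as $\eps=\alpha\tau^2\ell^2/4$, or split into two cases depending on which term of the max is active. In either case the structure of the argument stays the same: substitute, then absorb $\eps$ into the square term.
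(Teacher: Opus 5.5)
\textbf{Gap in the absorption step.} Your choice $\eps=\alpha\tau^2\ell^2/2$ is admissible: the hypothesis gives $\alpha\le \frac{2e^{\tau^{-1}(K-\ul\theta)(1+2/\Delta)}}{e\tau\ell^2}$, and the fact needed is $e\ge 1$, not $e>1/4$. Your reduction is also fine: with $B=\frac{2M}{\tau}+\ell\log\frac{2}{\alpha\tau\ell^2}$, it suffices to show $B^2\ge\frac23\ell^2$.

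The problem is how you justify that. The claim that the hypothesis implies $\alpha\le\frac{2}{e\tau\ell^2}$ is false. Write $a=(K-\ul\theta)/\tau$ and $c=a/\Delta$. The admissible range is
\[
\alpha\le\frac{2}{e\tau\ell^2}\cdot\frac{e^{a+2c}}{\max\{1,2c/\ell\}},
\]
and this ratio exceeds $1$, by a factor exponential in $a$, as soon as $K>\ul\theta$. At the endpoint one has $\log\frac{2}{\alpha\tau\ell^2}=1+\log\max\{1,2c/\ell\}-a(1+2/\Delta)$, which is negative once $a$ is moderately large.

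So relying on the logarithm alone fails precisely in the regime the lemma cares about. Using $M$ alone does not help either, because the log term can be negative of the same size. You must estimate the bracket as a whole. Over the whole admissible range,
\[
\ell\log\frac{2}{\alpha\tau\ell^2}\ge \ell-a\ell-2c\ell,
\qquad
\frac{2M}{\tau}=a\ell+2c(1+\ell),
\]
so the negative part is exactly cancelled and $B\ge \ell+\frac{2(K-\ul\theta)}{\tau\Delta}\ge\ell$.

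This is the paper's computation. It applies \autoref{lm:fastasymp:technical_ub_inf} with $b=\tau\ell/2$ and $X=\ol\eps/\tau$, and checks $\log(e^{M/b}/X)=\frac{2(K-\ul\theta)}{\tau\ell\Delta}>0$. Together with $\talpha\le X/(2eb^2)$ this gives $\log\frac{e^{M/b}}{2\talpha b^2}\ge1$. Note that $B=\ell\log\frac{e^{M/b}}{2\talpha b^2}$.

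\textbf{Comparison with the paper.} With this fix, your route is valid and somewhat simpler than the paper's. The paper (via the technical lemma) takes $\eps=\frac{\alpha\tau^2\ell^2}{2}\log\frac{e^{M/b}}{2\talpha b^2}$, which is your $\eps$ times the extra factor $B/\ell\ge 1$. That is the $M$-dependent choice you guessed at. The mysterious $\max\{\ell/2,\frac{K-\ul\theta}{\tau\Delta}\}$ in the hypothesis is exactly what guarantees that this larger $\eps$ stays below $\ol\eps$. The absorption there then uses $2u+(u-\log u)^2\le 4u^2$ for $u\ge1$. Your log-free $\eps$ makes admissibility immediate and only needs $B\ge\sqrt{2/3}\,\ell$, giving $Q(\alpha)\le\frac34\alpha\tau^2B^2$. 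This is no worse than the stated bound, once the bracket is bounded using both $M$ and the logarithm together.
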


\begin{proof}
    By plugging the good approximate minimizers identified in \autoref{prop:fastasymp:good_approx_minimizer} into the infimum defining $Q(\alpha)$, we have
    \begin{align}
        Q(\alpha)
        &\leq \inf_{0<\eps \leq \ol\eps}
        \left[ \Psi(f[\hat g_\eps], \hat g_\eps) - \inf \Psi + \alpha \norm{\hat g_\eps}_{\var}^2 \right]
        \leq \inf_{0<\eps \leq \ol\eps}\,
        \eps + \alpha \left(
            \frac{\tau \ell}{2} \log(\tau/\eps) + M
        \right)^2 \\
        \tau^{-1} Q(\tau \talpha)
        &\leq 
        \inf_{0<\eps \leq \ol\eps}\,
        \eps/\tau + \talpha \left(
            \frac{\tau \ell}{2} \log(\tau/\eps) + M
        \right)^2
        =
        \inf_{0< x \leq \ol\eps/\tau}\,
        x + \talpha \left(
            \frac{\tau \ell}{2} \log(1/x) + M
        \right)^2
    \end{align}
    where $\ol\eps = \tau\, e^{\tau^{-1} (K-\ul\theta) (1+2/\Delta)}$ and $\talpha = \tau^{-1} \alpha$.
    So let us apply \autoref{lm:fastasymp:technical_ub_inf} below with
    $b = \frac{\tau \ell}{2}$,
    $X = \ol\eps/\tau$.
    The condition $X < e^{M/b}$ is indeed satisfied since
    \begin{align*}
        \log\left( e^{M/b} / X \right)
        = \frac{M}{b} - \log X
        = \frac{2 M}{\tau \ell} - \log (\ol\eps/\tau) 
        &= \frac{2}{\tau \ell} (K-\ul\theta) \left( \frac\ell2 + \frac{1+\ell}{\Delta} \right)
        - \tau^{-1} (K-\ul\theta) \left( 1 + \frac2\Delta \right) \\
        &= \tau^{-1} (K-\ul\theta)
        \left[ 
            1 + \frac{2 (1+\ell)}{\ell \Delta}
            - \left( 1 + \frac2\Delta \right)
        \right] \\
        &= \tau^{-1} (K-\ul\theta)\,
        \frac{2}{\ell \Delta}
        > 0.
    \end{align*}
    Applying the lemma, we obtain
    \begin{align}
        \tau^{-1} Q(\tau \talpha)
        &\leq 4 \talpha b^2
        \left[
            \log\left( \frac{e^{M/b}}{2 \talpha b^2} \right)
        \right]^2
        = \talpha \tau^2 \ell^2
        \left[ 
            \frac{2 M}{\tau \ell}
            + \log \frac{2}{\talpha \tau^2 \ell^2}
        \right]^2 
        = \talpha \tau^2
        \left[ 
            \frac{2 M}{\tau}
            + \ell\, \log \frac{2}{\talpha \tau^2 \ell^2}
        \right]^2 \\
        Q(\alpha)
        &\leq \alpha \tau^2
        \left[ 
            \frac{2 M}{\tau}
            + \ell\, \log \frac{2}{\alpha \tau \ell^2}
        \right]^2,
    \end{align}
    provided that
    \begin{equation}
        \tau^{-1} \alpha
        = \talpha
        \leq
        \frac{X}{2 e \, b^2 \, \max\left\{1, \log(e^{M/b}/X) \right\}}
        = \frac{
            2\, e^{\tau^{-1} (K-\ul\theta) (1+2/\Delta)}
        }{
            e \, \tau^2 \ell^2 \, \max\left\{
                1, 
                \frac{2}{\ell} \frac{K-\ul\theta}{\tau \Delta}
            \right\}
        }
        = \frac{
            e^{\tau^{-1} (K-\ul\theta) (1+2/\Delta)}
        }{
            e \, \tau^2 \ell \, \max\left\{
                \ell/2, 
                \frac{K-\ul\theta}{\tau \Delta}
            \right\}
        },
    \end{equation}
    as announced.
\end{proof}

The proof of the following technical lemma is delayed to \autoref{sec:apx_delayedpf_technical}.

\begin{lemma} \label{lm:fastasymp:technical_ub_inf}
    Let $b, X>0, M \in \RR$ such that $X < e^{M/b}$. 
    Then for all
    $0 < \alpha \leq
    \frac{X}{2 e b^2 \max\left\{ 1, \log(e^{M/b}/X) \right\}}$,
    \begin{equation}
        \inf_{0<x\le X}
        x + \alpha \Big( b \log(1/x) + M \Big)^2
        \leq 4 \alpha b^2
        \left[
            \log\left( \frac{e^{M/b}}{2\alpha b^2} \right)
        \right]^2.
    \end{equation}
\end{lemma}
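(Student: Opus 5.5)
The plan is to evaluate the function $h(x) = x + \alpha \bigl( b \log(1/x) + M \bigr)^2$ at one explicit point $x^* \in (0, X]$. I would not try to compute the infimum exactly. The natural choice comes from balancing the two terms. The derivative of the second term is $-2\alpha b \bigl( b\log(1/x) + M \bigr)/x$, so at an approximate minimizer we expect $x \approx 2\alpha b\,(b\log(1/x)+M)$. Writing $L = \log\bigl( e^{M/b}/(2\alpha b^2) \bigr)$, I would take
\begin{equation}
    x^* = 2 \alpha b^2 L .
\end{equation}
At this point one expects $b\log(1/x^*) + M \approx bL$. The value is then about $2\alpha b^2 L + \alpha b^2 L^2$, and this is at most $4\alpha b^2 L^2$ once $L \geq 1$ (indeed $2L + L^2 \le 3L^2$).

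The key steps, in order, are as follows.

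First, I check that $L \geq 1$ and that $x^*$ is admissible, meaning $x^* \le X$. Put $\Lambda = \max\{1, \log(e^{M/b}/X)\}$; note $\log(e^{M/b}/X) > 0$ by assumption. The hypothesis on $\alpha$ says $2\alpha b^2 \le X/(e\Lambda)$. Hence
\begin{equation}
    L \ge \log\bigl( e\Lambda\, e^{M/b}/X \bigr) = 1 + \log \Lambda + \log(e^{M/b}/X) \ge 1 .
\end{equation}
For $x^* \le X$ I need $2\alpha b^2 L \le X$. The map $t \mapsto t \log(c/t)$, with $c = e^{M/b}$, is increasing for $t \le c/e$. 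The upper bound $t_0 = X/(e\Lambda)$ is indeed at most $c/e$. So it suffices to verify the inequality at $t_0$:
\begin{equation}
    t_0 \log(c/t_0) = \frac{X}{e\Lambda}\bigl(1 + \log\Lambda + \log(c/X)\bigr) \le \frac{X}{e\Lambda}(1 + \log\Lambda + \Lambda) \le X .
\end{equation}
The last step uses $1 + \log\Lambda + \Lambda \le e\Lambda$ for $\Lambda \ge 1$, which holds since $\log\Lambda \le \Lambda - 1$ gives $1 + \log\Lambda + \Lambda \le 2\Lambda \le e\Lambda$.

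Second, I bound the squared term at $x^*$. We have $b\log(1/x^*) + M = b\bigl( L - \log L \bigr)$. Since $L \ge 1$, we get $0 \le L - \log L \le L$. Therefore $(b\log(1/x^*)+M)^2 \le b^2 L^2$.

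Third, I combine the two bounds: $h(x^*) \le 2\alpha b^2 L + \alpha b^2 L^2 \le 3\alpha b^2 L^2 \le 4\alpha b^2 L^2$. This is the claim.

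The main obstacle is the admissibility check $x^* \le X$ in the first step, specifically the monotonicity of $t\log(c/t)$ together with the role of the $\max\{1,\cdot\}$ factor. Everything else is direct substitution. If the monotonicity bookkeeping fails, my fallback is to take $x^* = \min(X, 2\alpha b^2 L)$ and to bound the case $x^* = X$ separately, using $\log(e^{M/b}/X) \le L$.
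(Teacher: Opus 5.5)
Your proposal is correct and follows the paper's proof essentially step by step. It uses the same test point $x^* = 2\alpha b^2 L$, the same monotonicity of $t \mapsto t\log(e^{M/b}/t)$ on $(0, e^{M/b}/e]$ to get $x^* \le X$, and the same identity $b\log(1/x^*)+M = b(L-\log L)$. The differences are cosmetic: you merge the paper's two cases ($\log(e^{M/b}/X)\le 1$ versus $>1$) into one via $\Lambda$, and you use $(L-\log L)^2\le L^2$ to get constant $3$ instead of the paper's inequality $2u+(u-\log u)^2\le 4u^2$.
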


We can now conclude the proof of \autoref{thm:fastasymp:fastasymp}.

\begin{proof}[Proof of \autoref{thm:fastasymp:fastasymp}]
    Let $\alpha_k = \frac{2}{\tau (k-1)}$,
    $\ol\alpha = \frac{
        e^{\tau^{-1} (K-\ul\theta) (1+2/\Delta)}
    }{
        e \, \tau \ell \, \max\left\{
            \ell/2, 
            \frac{K-\ul\theta}{\tau \Delta}
        \right\}
    }$,
    and $M = (K-\ul\theta) \left( \frac{\ell}{2} + \frac{1 + \ell}{\Delta} \right)$.
    For any $k \geq \frac{2}{\tau \ol\alpha}+1$, we have $\alpha_k \leq \ol\alpha$ and so by \autoref{lm:fastasymp:ub_Q},
    \begin{equation}
        Q(\alpha_k)
        \leq \alpha_k \tau^2
        \left[ 
            \frac{2 M}{\tau}
            + \ell\, \log \frac{2}{\alpha_k \tau \ell^2}
        \right]^2
        = \frac{2 \tau}{k-1}
        \left[
            \frac{2 M}{\tau}
            + \ell\, \log \frac{k-1}{\ell^2}
        \right]^2.
    \end{equation}
    So for any $k \geq 3 \vee (\frac{4}{\tau \ol\alpha} + 2)$ even, by \autoref{lm:fastasymp:Psik_Ek_leq_Q},
    \begin{align}
        E_k^2 
        \leq \frac{8}{\tau k} \left( \Psi(f^{k/2}, g^{k/2}) - \inf \Psi \right)
        \leq \frac{8}{\tau k} Q(\alpha_{k/2})
        &\leq 
        \frac{8}{\tau k}\cdot
        \frac{2 \tau}{k/2-1}
        \left[
            \frac{2 M}{\tau}
            + \ell\, \log \frac{k/2-1}{\ell^2}
        \right]^2 \\
        &= 
        \frac{32}{k (k-2)}
        \left[
            \frac{2 M}{\tau}
            + \ell\, \log \frac{k-2}{2 \ell^2}
        \right]^2.
    \end{align}
    One can check using monotonicity of $Q(\cdot)$ that the same inequality holds also for $k$ odd.
    By computing
    $\frac{2M}{\tau} = \frac{K-\ul\theta}{\tau} \Big( \ell + \frac{2 (\ell+1)}{\Delta} \Big)$
    and taking square roots on both sides of the inequality, we obtain
    \begin{equation}
        \forall k \geq 3 \vee \left( \frac{4}{\tau \ol\alpha} + 2 \right),~
        E_k 
        \leq \frac{4 \sqrt{2}}{\sqrt{k (k-2)}} \left[ 
            \frac{K-\ul\theta}{\tau} \left( \ell + \frac{2 (\ell+1)}{\Delta} \right)
            + \ell\, \log \frac{k-2}{2 \ell^2}
        \right].
    \end{equation}
    It only remains to estimate the lower bound on $k$: by definition of $\ol\alpha$,
    \begin{align}
        \frac{4}{\tau \ol\alpha}
        = \frac{
            4 e \, \ell \, \max\left\{
                \ell/2, 
                \frac{K-\ul\theta}{\tau \Delta}
            \right\}
        }{
            e^{\tau^{-1} (K-\ul\theta) (1+2/\Delta)}
        }
        &\leq 4 e \, \ell 
        \max\left\{
            \ell/2,~
            \frac12 \cdot \frac{2 (K-\ul\theta)}{\tau \Delta}
            \exp\left(
                -\frac{2 (K-\ul\theta)}{\tau \Delta}
            \right)
        \right\} \\
        &\leq 4 e \, \ell 
        \max\left\{
            \ell/2,~
            \frac12
        \right\}
        = 2 e \ell^2
    \end{align}
    since $\forall x, x e^{-x} \leq 1$,
    so the above inequality holds for all 
    $k \geq 2 e \ell^2 + 3$,
    as announced.
\end{proof}

\begin{remark} \label{rk:fastasymp:soules}
    The following example, adapted from \cite[page~19]{soules1991rate}, shows that our analysis is loose in some cases and indicates that the factor $\log k$ in our convergence upper bound may be removable.
    Let $m = n = 2$, $\mu = \nu = (\frac12, \frac12)$, and 
    $A = \begin{bmatrix}
        1 & 1 \\
        0 & 1
    \end{bmatrix}$
    (or equivalently,
    $C/\tau = -\log 4 \begin{bmatrix}
        1 & 1 \\
        \infty & 1
    \end{bmatrix}$
    so that $A_{ij} = e^{-C_{ij}/\tau} \mu_i \nu_j$).
    In particular $\EEE = \left\{ (1,1), (1,2), (2,2) \right\}$.
    
    The scaling $Q(\alpha) \lesssim \alpha \log(1/\alpha)^2$ for small $\alpha$ obtained in \autoref{prop:fastasymp:good_approx_minimizer}, \autoref{lm:fastasymp:ub_Q} is sharp in this case.
    Indeed, one can show that, taking $\tau=1$ for simplicity, \cite{cuturi2018semidual}
    \begin{align}
        \forall g,~
        \Psi(f[g], g)
        &= \tau \sum_i \mu_i \log \bigg(
            \sum_j
            e^{[-C_{ij}+g_j]/\tau}
            \nu_j
        \bigg)
        - \nu^\top g \\
        &= \frac12 \log \left( 2 e^{g_1} + 2 e^{g_2} \right)
        + \frac12 \log \left( 2 e^{g_2} \right)
        - \frac12 (g_1 + g_2) \\
        &= \log 2
        + \frac12 \log \left( 1 + e^{g_2-g_1} \right).
    \end{align}
    Since $\inf_{\delta \in \RR} \log(1+e^\delta) = 0$, then $\inf \Psi = \log 2$. Thus
    \begin{equation}
        Q(\alpha) = \inf_{g \in \RR^2} \left[ \Psi(f[g], g) - \inf \Psi + \alpha \norm{g}_{\var}^2 \right]
        = \inf_{\delta \in \RR}\,
        \frac12 \log (1 + e^\delta)
        + \alpha \abs{\delta/2}^2,
    \end{equation}
    and one can show that
    $Q(\alpha) \sim \frac14 \alpha \log(1/\alpha)^2$ as $\alpha \to 0$.
    As a consequence, upon substitution into \autoref{lm:fastasymp:Psik_Ek_leq_Q},
    the best upper bound on $E_k$ one can obtain with our rate-function based approach is of order $k^{-1} \log k$.

    On the other hand, the Sinkhorn iterates (with $g^0=0$) are given by, for all $k \geq 1$,
    \begin{equation}
        \text{if $k$ is even,}~~
        \pi^{k} = \frac12 \begin{bmatrix}
            1 & \frac1k \\
            0 & 1-\frac1k
        \end{bmatrix},
        \qquad 
        \text{and if $k$ is odd,}~~
        \pi^{k} = \frac12 \begin{bmatrix}
            1-\frac1k & \frac1k \\
            0 & 1
        \end{bmatrix}.
    \end{equation}
    So for all $k$ even,
    $X_\sharp \pi^k = \frac12 \left( 1+\frac1k, 1-\frac1k \right)$ and
    $E_k = \norm{X_\sharp \pi^k - \mu}_1
    = \frac1k$.

    In view of this example, obtaining a smaller bound than $E_k \leq O(\log k/k)$ would require
    tightening the ``main body of the analysis'' presented in \autoref{lm:fastasymp:Psik_Ek_leq_Q},
    or taking a different approach than the one used in this paper.
\end{remark}

\begin{remark}
    A similar situation with a mismatch between the lower bound $\Omega(1/k)$ and the upper bound $O(\log k/k)$ obtained through a rate function, 
    even though the rate function estimate is provably sharp,
    arose in \cite[Proposition 5.5]{chizat2022convergence}.
    The setting studied there is convex optimization over the space of measures, and the aforementioned mismatch occurs for the Bregman proximal gradient method with an entropic link function (``$\eta_{\mathrm{ent}}$'' or ``$\eta_{\mathrm{hyp}}$'' in their notations).
\end{remark}

\section*{Acknowledgments}

I would like to thank Atsushi Nitanda for insightful discussions on the Sinkhorn algorithm.

A LLM (ChatGPT-5.4 Thinking) was used as an interactive research assistant throughout this project, except at the writing stage.
Notably, the construction presented in \autoref{sec:apx_delayedpf_GAM}, allowing to refine the constants in \cite[Lemma~3.3]{allen2017much},
was provided by it.
The author assumes responsibility for all content.

\printbibliography
\addcontentsline{toc}{section}{\refname} 

\ifextended%
\newpage
\appendix
\phantomsection
\addcontentsline{toc}{section}{APPENDIX}


\section{Proof of \autoref{prop:warmup:good_minimizer}} \label{sec:apx_delayedpf_warmup}

In this appendix, we present the full proof of \autoref{prop:warmup:good_minimizer}, restated below.
As explained in the main text, the proposition follows from exactly the same arguments as \cite[Theorem~5.1]{kalantari2008complexity}.
We reproduce the proof in full here only for the reader's convenience, because a different convention for the target marginals' scaling was adopted in that reference and carrying out the necessary adaptations can be tedious.

Recall that we call normalized minimizer of $\Psi$ any $(f^*, g^*) \in \argmin \Psi$ satisfying the three equivalent conditions of \autoref{lm:intro:normalized_minimizers}, i.e.,
$\sum_{ij} e^{[-C_{ij}+f^*_i+g^*_j]/\tau} \mu_i \nu_j = 1$
and $f^* = f[g^*]$ 
and $g^* = g[f^*]$.

\begin{proposition*}[\autoref{prop:warmup:good_minimizer}, restated]
    Suppose that $A$ is exactly scalable.
    Then there exists a normalized minimizer $(f^*, g^*)$ of $\Psi$ such that
    \begin{equation}
        \norm{f^*}_{\var}, \norm{g^*}_{\var}
        \leq \frac{K-\theta}{\Delta}
        \qquad\text{and}\qquad
        \norm{f^*}_\infty, \norm{g^*}_\infty \leq \frac{K}{2} + \frac{K-\theta}{\Delta}
    \end{equation}
    where $K, \theta, \Delta$ are the quantities defined in \eqref{eq:warmup:def_K_theta} and \eqref{eq:warmup:def_Delta}, that is,
    \begin{align}
        K &= \max_{(i,j) \in \EEE} C_{ij}
        - \tau \log \left( \mu_{\min} \vee \nu_{\min} \right),
        \qquad\qquad
        \theta = -\tau \log \sum_{ij} e^{-C_{ij}/\tau} \mu_i \nu_j, \\
        \Delta &= \min_{\substack{I \subset \{1 \dots m\} \\ J \subset \{1 \dots n\}}}~
        \abs{\sum_{i \in I} \mu_i - \sum_{j \in J} \nu_j}
        ~~\text{subject to}~~
        \sum_{i \in I} \mu_i \neq \sum_{j \in J} \nu_j.
    \end{align}
\end{proposition*}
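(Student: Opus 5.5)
The plan is a layer-cake (level-set) argument in the spirit of \cite[Theorem~5.1]{kalantari2008complexity}. It uses only three ingredients: exact feasibility of the marginals at a normalized minimizer, an entrywise upper bound on $f^*_i+g^*_j$ over $\EEE$, and the comparison $\min\Psi\le\Psi(0,0)$.

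\textbf{Step 1 (basic inequalities).} Fix any normalized minimizer $(f^*,g^*)$ and let $\pi^*=\pi[f^*,g^*]$. Since it minimizes $\Psi$, \autoref{lm:intro:nablaPsi} gives $X_\sharp\pi^*=\mu$ and $Y_\sharp\pi^*=\nu$, with $Z=1$.

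\emph{Entrywise bound.} From $\pi^*_{ij}\le (X_\sharp\pi^*)_i=\mu_i$ we get $e^{[-C_{ij}+f^*_i+g^*_j]/\tau}\nu_j\le 1$, i.e. $f^*_i+g^*_j\le C_{ij}-\tau\log\nu_j$. Symmetrically, $\pi^*_{ij}\le\nu_j$ gives the same with $\mu_i$. Hence for all $(i,j)\in\EEE$,
\begin{equation}
f^*_i+g^*_j\le \max_\EEE C-\tau\log(\mu_{\min}\vee\nu_{\min})=K .
\end{equation}

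\emph{Objective bound.} Because $Z(f^*,g^*)=1$,
\begin{equation}
-\mu^\top f^*-\nu^\top g^*=\min\Psi\le\Psi(0,0)=-\theta,
\end{equation}
so $\mu^\top f^*+\nu^\top g^*\ge\theta$.

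Finally, all pairs $(f^*+b\bmone_m,\,g^*-b\bmone_n)$ are again normalized minimizers. The shift $b$ changes neither the variation semi-norms nor the two inequalities above, so it is kept free until the end.

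\textbf{Step 2 (level sets).} For $t\in\RR$ set $I_t=\{i: f^*_i\ge t\}$ and $J_t=\{j: g^*_j\le K-t\}$. By Step~1, every edge of $\EEE$ leaving a row of $I_t$ lands in $J_t$. Since $\pi^*$ is supported on $\EEE$ and has exact marginals,
\begin{equation}
\mu(I_t)=\pi^*(I_t\times\cdot)\le\pi^*(\cdot\times J_t)=\nu(J_t).
\end{equation}
Hence $h(t):=\nu(J_t)-\mu(I_t)\ge 0$, and by definition of $\Delta$, either $h(t)=0$ or $h(t)\ge\Delta$.

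Next I write $K-\mu^\top f^*-\nu^\top g^*$ as an integral of $h$ via the layer-cake formulas $f^*_i=\int \mathbf 1\{t\le f^*_i\}\,dt+\text{const}$ and $K-g^*_j=\int\mathbf 1\{t\le K-g^*_j\}\,dt+\text{const}$, integrating over a window such as $[\min f^*,\max f^*]$. The constants must be handled carefully, using $\mu(\{1..m\})=\nu(\{1..n\})=1$. The target inequality is
\begin{equation}
\int_{\min f^*}^{\max f^*} h(t)\,dt\;\le\; K-\mu^\top f^*-\nu^\top g^*\;\le\; K-\theta,
\end{equation}
possibly with a boundary term that has a favourable sign.

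\textbf{Step 3 (main obstacle: ruling out $h=0$).} I must show that $h(t)\ge\Delta$ for almost every $t$ in the window, so that $2\norm{f^*}_{\var}\,\Delta\le K-\theta$ up to the constant conventions. Suppose $h(t)=0$ on a set of positive measure. Then for such $t$, $\mu(I_t)=\nu(J_t)$, so all $\pi^*$-mass of the columns $J_t$ comes from the rows $I_t$. Since $\pi^*>0$ on all of $\EEE$ (exact scalability with a minimizer), there are no edges between $I_t^c$ and $J_t$. Rows in $I_t$ also have no edges to $J_t^c$. The graph would then split into $I_t\sqcup J_t$ and its complement, contradicting connectedness unless one of these blocks is empty or everything.

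I expect the delicate part to be the boundary cases, i.e. the strict versus non-strict inequalities in $I_t,J_t$ and the ends of the window. These are presumably resolved by choosing the window where $I_t$ is a proper nonempty subset of rows, which is exactly $t\in(\min f^*,\max f^*]$. The analogous statement for $g^*$ follows by symmetry (swap the roles of rows and columns, or note $\norm{g^*}_{\var}=\norm{g[f^*]}_{\var}$ and redo the argument with column level sets).

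\textbf{Step 4 (sup-norm bound via the shift).} Choose $b$ so that $\max f^*+\min g^*$-type quantities are balanced, e.g. so that $\max f^*=\max g^*$. Then Step~1 applied on an edge attaining the maxima bounds the common top value by roughly $K/2$ plus a variation term. The lower side follows from the variation bound of Step~3, which yields $\norm{\cdot}_\infty\le K/2+(K-\theta)/\Delta$.
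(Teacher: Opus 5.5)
Your Steps 1--3 are correct and give a genuinely different proof of the variation bound. The paper works with the polytope $\Omega$ cut out by $f_i+g_j\le K$ on $\EEE$, $\mu^\top f+\nu^\top g\ge\theta$ and $f_1=K/2$. It places a normalized minimizer in $\Omega$ and uses boundedness of $\Omega$ to reduce to vertices. At a vertex the active edge constraints form a spanning tree or a two-component forest, so the potentials take only the values $K/2$ and $K/2\pm t$, and the active objective constraint forces $|t|\le (K-\theta)/\Delta$. A separate limiting argument handles $K=\theta$.

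You instead argue directly on $\pi^*$. The level pairs $(I_t,J_t)$ are closed under edges, and the identity $\int_{\RR} h(t)\,dt=K-\mu^\top f^*-\nu^\top g^*$ holds exactly. There is no boundary term, since $h\equiv 0$ below $\min(\min f^*,K-\max g^*)$ and above $\max(\max f^*,K-\min g^*)$. Full support of $\pi^*$ on $\EEE$ plus connectedness then forces $h\ge\Delta$ wherever $I_t$ is a proper nonempty subset. This bypasses the polytope, its boundedness and the vertex/forest case analysis, and needs no special treatment of $K=\theta$. It in fact gives $\norm{f^*}_{\var},\norm{g^*}_{\var}\le (K-\theta)/(2\Delta)$, half the stated constant.

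Step~4, however, has a gap. The variation bound controls the spread of $f^*$ but not its level. So ``the lower side follows from the variation bound'' needs a lower bound on the common top value $M=\max f^*=\max g^*$, which you do not supply. The only available anchor is $\mu^\top f^*+\nu^\top g^*\ge\theta$. Used in the obvious way ($2M\ge\theta$), it gives $\min f^*\ge K/2-(K-\theta)/\Delta-(K-\theta)/2$, which misses the target $K/2-(K-\theta)/\Delta$ by $(K-\theta)/2$. Also, an edge joining a maximizer of $f^*$ to a maximizer of $g^*$ need not exist. Use any edge at a maximizer of $f^*$ together with the spread of $g^*$; this gives $M\le K/2+(K-\theta)/(2\Delta)$.

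The fix stays inside your framework. Let $[T_0,T_1]$ be the convex hull of $\{f^*_i\}_i\cup\{K-g^*_j\}_j$. For every $t\in(T_0,T_1]$, the vertex set $I_t\sqcup J_t$ is neither empty nor everything, so your Step~3 argument gives $h(t)\ge\Delta$ on all of $(T_0,T_1]$. Hence $T_1-T_0\le (K-\theta)/\Delta$. The shift $(f^*+b\bmone_m,\,g^*-b\bmone_n)$ translates all $f^*_i$ and all $K-g^*_j$ by the same $b$, so you may center this interval at $K/2$. This gives $|f^*_i-K/2|,\,|g^*_j-K/2|\le (K-\theta)/(2\Delta)$ for all $i,j$, which yields both bounds at once.

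Note that this, like the paper's $|f_i-K/2|\le (K-\theta)/\Delta$, gives the sup-norm bound with $|K|/2$ in place of $K/2$. The stated form tacitly needs $K\ge 0$: for $m=n=1$ and $C_{11}<0$ it would force $\norm{f^*}_\infty<0$. This is harmless for the paper, since later it only uses the upper bounds $f^*_i,g^*_j\le K/2+(K-\theta)/\Delta$.
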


The following lemma is the adaptation of \cite[Lemmas~5.1, 5.2]{kalantari2008complexity}.

\begin{lemma} \label{lm:warmup:kalantari08_lm5152}
    Let $\Omega$ be the set of $(f, g) \in \RR^m \times \RR^n$ such that
    \begin{equation} \label{eq:warmup:Omega}
        \begin{aligned}
            \forall (i,j) \in \EEE,~
            f_i + g_j &\leq K, \\
            \mu^\top f + \nu^\top g &\geq \theta, \\
             f_1 &= \frac{K}{2}.
        \end{aligned}
    \end{equation}
    Then there exists a normalized minimizer $(f^*, g^*)$ of $\Psi$ that lies in $\Omega$.
    Moreover, $\Omega$ is a bounded convex polytope.
\end{lemma}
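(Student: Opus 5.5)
The plan is to start from an arbitrary normalized minimizer and shift it into $\Omega$. For boundedness of $\Omega$, I will show that its recession cone is trivial, using exact scalability and the connectedness of $\EEE$.

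\emph{Existence of a minimizer in $\Omega$.} Since $A$ is exactly scalable, $\Psi$ attains its infimum at some $(f^*, g^*)$.
\begin{itemize}
    \item By \autoref{lm:intro:normalized_minimizers}, we may take $(f^*, g^*)$ normalized, i.e., $Z(f^*, g^*) = 1$.
    \item For any $b \in \RR$, the pair $(f^* + b\bmone_m, g^* - b\bmone_n)$ is still a minimizer. Indeed, $\Psi$ is invariant under this shift, because $\mu^\top\bmone_m = \nu^\top\bmone_n = 1$. The shift also leaves $Z$ unchanged, so the shifted pair is still normalized.
    \item Choose $b$ so that $f^*_1 = K/2$. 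This gives the third constraint of \eqref{eq:warmup:Omega}.
\end{itemize}

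\emph{First constraint.} At a normalized minimizer the gradient vanishes, so by \autoref{lm:intro:nablaPsi}, $\pi^* = \pi[f^*, g^*]$ has marginals $\mu$ and $\nu$. Moreover $\pi^*_{ij} = e^{[-C_{ij}+f^*_i+g^*_j]/\tau}\mu_i\nu_j$, since $Z = 1$. For $(i,j) \in \EEE$, using $\pi^*_{ij} \le \mu_i$ gives
\begin{equation}
    f^*_i + g^*_j \le C_{ij} - \tau \log \nu_j \le \max_\EEE C - \tau \log \nu_{\min}.
\end{equation}
Using $\pi^*_{ij} \le \nu_j$ instead gives the same bound with $\mu_{\min}$. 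Taking the better of the two bounds yields $f^*_i + g^*_j \le K$.

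\emph{Second constraint.} Because $Z(f^*, g^*) = 1$, we have $\Psi(f^*, g^*) = -\mu^\top f^* - \nu^\top g^*$. Minimality gives $\Psi(f^*, g^*) \le \Psi(0,0) = -\theta$. Hence $\mu^\top f^* + \nu^\top g^* \ge \theta$.

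\emph{$\Omega$ is a bounded convex polytope.} $\Omega$ is defined by finitely many linear (in)equalities, so it is a closed convex polyhedron. It is also nonempty, by the previous step. It therefore suffices to show that its recession cone is $\{0\}$.

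Let $(\delta f, \delta g)$ be a recession direction. Then:
\begin{itemize}
    \item $\delta f_i + \delta g_j \le 0$ for all $(i,j) \in \EEE$;
    \item $\mu^\top \delta f + \nu^\top \delta g \ge 0$;
    \item $\delta f_1 = 0$.
\end{itemize}
By exact scalability there is a feasible coupling $\pi$ with marginals $\mu, \nu$ and $\pi_{ij} > 0$ on all of $\EEE$; for instance, $\pi^*$ above works. Then
\begin{equation}
    0 \le \mu^\top \delta f + \nu^\top \delta g = \sum_{(i,j)\in\EEE} \pi_{ij}(\delta f_i + \delta g_j) \le 0 .
\end{equation}
Every term on the right is nonpositive and every $\pi_{ij}$ is positive, so $\delta f_i + \delta g_j = 0$ on each edge of $\EEE$.

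Connectedness of the bipartite graph now propagates the value along edges: $\delta f_i = c$ for all $i$ and $\delta g_j = -c$ for all $j$, for some constant $c$. The condition $\delta f_1 = 0$ forces $c = 0$. Hence the recession cone is trivial and $\Omega$ is bounded.

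The only delicate point is this boundedness step. It is exactly where exact scalability enters, through the existence of a coupling with full support on $\EEE$, and where connectedness of $\EEE$ enters. The rest consists of direct computations.
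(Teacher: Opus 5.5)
Your proof is correct. The first half matches the paper: you shift a normalized minimizer so that $f^*_1=\frac K2$, bound each edge using $\pi^*_{ij}\le\mu_i$ and $\pi^*_{ij}\le\nu_j$, and get $\mu^\top f^*+\nu^\top g^*\ge\theta$ from $\Psi(f^*,g^*)\le\Psi(0,0)$. The only difference there is immaterial: you derive the marginal identities from $\nabla\Psi(f^*,g^*)=0$ rather than from $f^*=f[g^*]$ and $g^*=g[f^*]$.

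The genuine difference is boundedness. The paper does not prove it. It defers to \cite[Lemma~5.2]{kalantari2008complexity} and remarks that boundedness holds for any $K,\theta$ independently of matrix-scaling considerations. You instead give a self-contained recession-cone argument. A feasible coupling with full support on $\EEE$ (here $\pi^*$ itself) forces $\delta f_i+\delta g_j=0$ on every edge, and connectedness together with $\delta f_1=0$ removes the last degree of freedom.

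Your route avoids translating between the conventions of the external reference. It also shows exactly which hypotheses boundedness rests on, and these hypotheses are genuinely needed. Take the asymptotically but not exactly scalable example of \autoref{rk:fastasymp:soules}, with $\mu=\nu=(\frac12,\frac12)$ and $\EEE=\{(1,1),(1,2),(2,2)\}$. There the direction $\delta f=(0,1)$, $\delta g=(0,-1)$ satisfies all the recession constraints. Moreover $\Omega$ is nonempty, since it contains $(\frac K2\bmone_2,\frac K2\bmone_2)$ because $K\ge\theta$ by \autoref{rk:warmup:K_theta}. So $\Omega$ is unbounded in that case. Similarly, without connectedness, components not containing row $1$ could be shifted freely.

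The paper's citation is harmless in context, because the lemma is only invoked under exact scalability and the standing connectedness assumption. Your argument makes that dependence explicit.
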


\begin{proof}
    Let us show that the constraints
    $\forall (i,j) \in \EEE,~ f_i + g_j \leq K$ are satisfied for any normalized minimizer $(f^*, g^*)$ of $\Psi$.
    Indeed, since $f^* = f[g^*]$, then
    by the same computation as for \autoref{lm:intro:marginals_match},
    \begin{align}
        \forall (i,j) \in \EEE,~~
        e^{[-C_{ij} + f^*_i + g^*_j]/\tau} \mu_i \nu_j
        &\leq \sum_{j'} e^{[-C_{ij'} + f^*_i + g^*_{j'}]/\tau} \mu_i \nu_{j'} = \mu_i \\
        f^*_i + g^*_j 
        &\leq C_{ij} - \tau \log \nu_j
        \leq \max_{(i',j') \in \EEE} C_{i'j'} - \tau \log \nu_{\min}.
    \end{align}
    The symmetric argument using $g^* = g[f^*]$ shows that
    $\forall (i,j) \in \EEE,~ f^*_i + g^*_j \leq \max_\EEE C - \tau \log \mu_{\min}$.

    Now let us show that the constraint $\mu^\top f + \nu^\top g \geq \theta$ is satisfied for any minimizer $(f^*, g^*)$ of $\Psi$.
    Indeed, by definition of $\Psi$,
    \begin{equation}
        \Psi(f^*, g^*) = -\mu^\top f^* - \nu^\top g^*
        \leq \Psi(0, 0) 
        = \tau \log \sum_{ij} e^{-C_{ij}/\tau} \mu_i \nu_j = -\theta.
    \end{equation}

    Finally, note that by \autoref{lm:intro:normalized_minimizers}, if $(f^*, g^*)$ is a normalized minimizer of $\Psi$, then so is $(f^* + b \bmone_m, \allowbreak g^* - b \bmone_n)$ for any $b \in \RR$.
    This implies that there exists a normalized minimizer $(f^*, g^*)$ of $\Psi$ such that $f^*_1 = \frac{K}{2}$, and so, $(f^*, g^*)$ lies in $\Omega$.
    
    The fact that the convex polyhedron $\Omega$ is bounded for any $K, \theta \in \RR$ is proved in \cite[Lemma~5.2]{kalantari2008complexity} independently of any considerations about matrix scaling.
\end{proof}

The following lemma is the adaptation of \cite[proof of Theorem~5.1]{kalantari2008complexity}.
\begin{lemma} \label{lm:warmup:kalantari08_thm51}
    For any $(f, g) \in \Omega$ the set defined in the previous lemma,
    \begin{equation}
        \forall i,~
        \abs{f_i - \frac{K}{2}}
        \leq \frac{K-\theta}{\Delta}
        \qquad \text{and} \qquad
        \forall j,~
        \abs{g_j - \frac{K}{2}}
        \leq \frac{K-\theta}{\Delta}.
    \end{equation}
\end{lemma}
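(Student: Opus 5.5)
The plan is to reduce the statement to a layer-cake (level-set) argument. The two ingredients are a Hall-type inequality, which follows from exact scalability, and connectivity of $\EEE$; the constant $\Delta$ enters as the minimal nonzero gap $\abs{\mu(I)-\nu(J)}$.

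\textbf{Change of variables.} First I recenter. Set $u_i = f_i - \frac{K}{2}$ and $w_j = \frac{K}{2} - g_j$. Since $\mu,\nu$ are probability vectors, the constraints \eqref{eq:warmup:Omega} become
\begin{equation}
    u_i \le w_j ~~ \forall (i,j)\in\EEE,
    \qquad
    \mu^\top u - \nu^\top w \ge \theta - K,
    \qquad
    u_1 = 0.
\end{equation}
The goal becomes $\abs{u_i}, \abs{w_j} \le \frac{K-\theta}{\Delta}$. Next, for $s\in\RR$ define
\begin{equation}
    I_s = \{i : u_i > s\},
    \qquad
    J_s = \{j : w_j > s\},
    \qquad
    h(s) = \mu(I_s) - \nu(J_s),
\end{equation}
where $\mu(I)=\sum_{i\in I}\mu_i$. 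The layer-cake formula gives $\mu^\top u - \nu^\top w = \int_{\RR} h(s)\,ds$. Here the usual correction terms for negative values cancel because $\mu$ and $\nu$ have the same total mass, and $h$ has compact support.

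\textbf{Sign of $h$ and the gap $\Delta$.} If $i\in I_s$ and $(i,j)\in\EEE$, then $w_j \ge u_i > s$, so the neighbourhood satisfies $N(I_s)\subset J_s$. Exact scalability provides a feasible plan $\pi$ with marginals $\mu,\nu$ that is positive on every edge of $\EEE$. Hence $\mu(I_s) = \pi(I_s\times N(I_s)) \le \nu(N(I_s)) \le \nu(J_s)$, so $h(s)\le 0$.

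Now suppose $h(s)=0$. Then all of $\pi$'s mass into $J_s$ comes from $I_s$. Since $\pi>0$ on $\EEE$, there is no edge from $I_s^c$ to $J_s$, and $N(I_s)\subset J_s$ says there is no edge from $I_s$ to $J_s^c$. So $I_s\sqcup J_s$ is a union of connected components of the bipartite graph. Connectivity then forces $I_s\sqcup J_s$ to be either empty or everything. Consequently, whenever $I_s\sqcup J_s$ is a nonempty proper subset, we get $h(s)\ne 0$, and by definition of $\Delta$, $h(s)\le -\Delta$.

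\textbf{Conclusion.} Combining the two previous steps,
\begin{equation}
    \theta - K \le \int h \le -\Delta\cdot \mathrm{Leb}\{s : \emptyset \ne I_s\sqcup J_s \ne \text{all}\}.
\end{equation}
It remains to exhibit an interval of length $\abs{u_i}$ (resp. $\abs{w_j}$) inside this set, using $u_1=0$.
\begin{itemize}
    \item If $u_i = a > 0$: for $s\in[0,a)$ we have $i\in I_s$ and $1\notin I_s$.
    \item If $u_i = -a < 0$: for $s\in[-a,0)$ we have $1\in I_s$ and $i\notin I_s$.
    \item If $w_j = a>0$: for $s\in[0,a)$ we have $j \in J_s$ and $1\notin I_s$.
    \item If $w_j = -a<0$: for $s\in[-a,0)$ we have $1\in I_s$ and $j\notin J_s$.
\end{itemize}
In each case $I_s\sqcup J_s$ is nonempty and proper on an interval of length $a$, so $a\Delta \le K-\theta$. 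This gives the claim, since $\abs{g_j - \frac K2} = \abs{w_j}$.

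The main obstacle I expect is the case $h(s)=0$. Its treatment is exactly where exact scalability (a plan positive on all of $\EEE$, not merely supported in $\EEE$) and connectivity of $\EEE$ are both needed. The layer-cake identity is routine by comparison.
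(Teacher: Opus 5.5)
Your proof is correct, and it takes a genuinely different route from the paper's.

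\textbf{The paper's route.} It argues at the level of the polytope. It reduces to vertices of $\Omega$. At a vertex, the tight edge constraints form either a spanning tree, which forces $f\equiv g\equiv K/2$, or a two-component forest, which forces a single shift $\pm t$ on the component not containing $1$. The bound $\abs{t}\le (K-\theta)/\Delta$ is then read off from the active constraint $\mu^\top f+\nu^\top g=\theta$. The case $K=\theta$ needs a separate perturbation $\theta\to\theta-\varepsilon$.

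\textbf{What your route buys.} Your layer-cake identity $\mu^\top u-\nu^\top w=\int h$ in effect decomposes an arbitrary point of $\Omega$ into ``cut'' vectors of the same type that the paper's vertices are built from. So you get the bound at every point directly. You need no reduction to vertices and no limiting argument for $K=\theta$. Boundedness of $\Omega$ comes out as a by-product, rather than being imported from Kalantari et al.

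Your version also shows clearly where the hypotheses enter. The paper's vertex computation works for any support pattern, so exact scalability is used only through the boundedness of $\Omega$ needed to pass from vertices to all of $\Omega$. That boundedness genuinely fails without it: in the example of Remark~\ref{rk:fastasymp:soules}, $\Omega$ contains the ray $f=(K/2,K/2+t)$, $g=(K/2,K/2-t)$, $t\ge 0$. In your argument, the hypotheses act directly: positivity of $\pi$ on all of $\EEE$, together with connectivity, is exactly what upgrades $h(s)\le 0$ to $h(s)\le-\Delta$ on nontrivial level sets.

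\textbf{What the paper's route buys.} It gives an explicit description of the extreme points of $\Omega$, which is the form in which this argument appears in the matrix-scaling literature.
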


\begin{proof}
    As $\Omega$ is a convex polytope, it suffices to prove the inequalities for all of its vertices.
    Let $(f, g)$ be any vertex of $\Omega$.
    Since $\Omega$ is a polytope in $\RR^{m \times n}$, then at least $m+n$ linearly independent constraints among \eqref{eq:warmup:Omega} must be active at $(f, g)$.
    The equality constraint $f_1 = \frac{K}{2}$ is necessarily active.
    
    \textbf{Case 1:} Suppose that the constraint $\mu^\top f + \nu^\top g \geq \theta$ is not active at $(f, g)$.
    Then, at least $m+n-1$ linearly independent constraints of the form $f_i+g_j \leq K$ for $(i,j) \in \EEE$ must be active at $(f, g)$.
    That is, there exists a set $\EEE' \subset \EEE$ of cardinality $m+n-1$ such that
    $f_i+g_j = K$ for all $(i,j) \in \EEE'$,
    and such that
    $\left\{ (e_i, e_j), (i,j) \in \EEE' \right\}$ is linearly independent where $e_i$ denotes the $i$-th basis vector.
    In particular, the bipartite graph $(\{1 \dots m\} \sqcup \{1 \dots n \}, \EEE')$ cannot contain any cycle, as if it contained a cycle $(i_1, j_1, i_2, j_2, ..., i_\ell, j_\ell, i_1)$ then one would have
    $(e_{i_1}, e_{j_1}) - (e_{i_2}, e_{j_1}) + (e_{i_2}, e_{j_2}) - ... + (e_{i_\ell}, e_{j_\ell}) - (e_{i_1}, e_{j_\ell}) = 0$, contradicting the linear independence.
    Since $(\{1 \dots m\} \sqcup \{1 \dots n \}, \EEE')$ has $m+n-1$ edges and does not contain any cycles, then it is a tree, and so it is connected.

    Now for any $j$ such that $(1,j) \in \EEE'$, i.e., any neighbor $j$ of $1$ in the aforementioned tree, we have
    $g_j = K - f_1 = \frac{K}{2}$.
    For any neighbor $i$ of a neighbor $j$ of $1$, we have $f_i = K - g_j = \frac{K}{2}$.
    Since the tree is connected, recursively applying the argument shows that $\forall i, f_i = \frac{K}{2}$ and $\forall j, g_j = \frac{K}{2}$. 
    In particular, the inequalities claimed in the lemma indeed hold.

    \textbf{Case 2:} Suppose that the constraint $\mu^\top f + \nu^\top g \geq \theta$ is active at $(f, g)$.
    Then, by reasoning as in the previous case, there exists a set $\EEE' \subset \EEE$ of cardinality $m+n-2$ such that $f_i+g_j = K$ for all $(i,j) \in \EEE'$, and such that the bipartite graph $(\{1 \dots m\} \sqcup \{1 \dots n \}, \EEE')$ does not contain any cycles.
    As a consequence, this graph has exactly two connected components, which we denote by $I_1 \sqcup J_1$ and $I_2 \sqcup J_2$. Without loss of generality, suppose $1 \in I_1$.

    By reasoning as in the previous case, we have $\forall i \in I_1, f_i = \frac{K}{2}$ and $\forall j \in J_1, g_j = \frac{K}{2}$. In particular, the inequalities claimed in the lemma hold for those indices.

    For any $(i, j), (i, j') \in \EEE' \cap (I_2 \times J_2)$,
    $f_i = K - g_j = K - g_{j'}$. 
    Since $(I_2 \sqcup J_2, \EEE' \cap (I_2 \times J_2))$ is connected, recursively applying the argument shows that $\forall j \in J_2, g_j = K/2-t$ for some common $t$.
    Moreover $\forall i \in I_2, f_i = K-(K/2-t) = K/2+t$.

    Now recall the active constraint $\mu^\top f + \nu^\top g = \theta$. By the above discussion, it rewrites
    \begin{align}
        \theta = \mu^\top f + \nu^\top g 
        &= \bigg( \sum_{i \in I_1} \mu_i + \sum_{j \in J_1} \nu_j \bigg) \frac{K}{2}
        + \bigg( \sum_{i \in I_2} \mu_i \bigg) \bigg( \frac{K}{2} + t \bigg)
        + \bigg( \sum_{j \in J_2} \nu_j \bigg) \bigg( \frac{K}{2} - t \bigg) \\
        &= K + \bigg( \sum_{i \in I_2} \mu_i - \sum_{j \in J_2} \nu_j \bigg) t.
    \end{align}
    First suppose $K \neq \theta$. Then $\sum_{i \in I_2} \mu_i \neq \sum_{j \in J_2} \nu_j$ and
    \begin{equation}
        \abs{t} = \frac{K - \theta}{\abs{\sum_{i \in I_2} \mu_i - \sum_{j \in J_2} \nu_j}}
        \leq \frac{K-\theta}{\Delta},
    \end{equation}
    since we showed in \autoref{rk:warmup:K_theta} that $K \geq \theta$;
    hence the inequalities claimed in the lemma.
    Now suppose $K = \theta$.
    Then for any $\eps>0$, we may apply the above reasoning to the set $\Omega_\eps$ defined by the same constraints as $\Omega$ but with $\theta$ replaced by $\theta-\eps$.
    This shows that for any 
    $\forall (f, g) \in \Omega_\eps,
    \max_i \abs{f_i-\frac{K}{2}},
    \max_j \abs{g_j-\frac{K}{2}} \leq \frac{\eps}{\Delta}$.
    So by taking a limit $\eps \to 0$, since $\Omega_\eps$ converges to $\Omega$, we can indeed conclude to the inequalities claimed in the lemma.
\end{proof}

We can now conclude the proof of \autoref{prop:warmup:good_minimizer}.

\begin{proof}[Proof of \autoref{prop:warmup:good_minimizer}]
    Let $(f^*, g^*)$ be a normalized minimizer of $\Psi$ that lies in the set $\Omega$ introduced in \autoref{lm:warmup:kalantari08_lm5152}.
    Then by \autoref{lm:warmup:kalantari08_thm51}, 
    $\norm{f^*}_{\var} = \frac12 (\max f^* - \min f^*)
    \leq \frac{K-\theta}{\Delta}$
    and
    $\norm{f^*}_\infty \leq \frac{K}{2} + \frac{K-\theta}{\Delta}$
    and likewise for $g^*$,
    as desired.
\end{proof}

\section{Proof of \autoref{prop:fastasymp:good_approx_minimizer}} \label{sec:apx_delayedpf_GAM}

In this appendix, we present the proof of \autoref{prop:fastasymp:good_approx_minimizer}, restated below.
For ease of presentation, let us first recall the generalized Dulmage-Mendelsohn decomposition from \autoref{prop:fastasymp:DM_decomp} and introduce some notations.

\begin{definition} \label{def:apx_delayedpf_GAM:DM_decomp}
    Suppose that $A$ is asymptotically scalable.
    Let $P$, the partitions $\{ 1 \dots m \} = I_1 \sqcup ... \sqcup I_P$, $\{1 \dots n \} = J_1 \sqcup ... \sqcup J_P$, and the relation ``$\to$'' be defined as in \autoref{prop:fastasymp:DM_decomp}. That is, 
    \begin{itemize}
        \item For all $p \leq P$, 
        $\sum_{i \in I_p} \mu_i = \sum_{j \in J_p} \nu_j$
        and $A^{(p)} = (A_{ij})_{i \in I_p, j \in J_p}$ is $(\restr{\mu}{I_p}, \restr{\nu}{J_p})$-exactly scalable.
        \item The relation ``$\to$'' on $\{1 \dots P\}$ is defined by
        $p \to q \iff \exists i \in I_p, j \in J_q ~\text{s.t.}~ A_{ij}>0$ and $p \neq q$,
        and the directed graph $(\{1 \dots P\}, \{ (p,q);~ p \to q \} )$ is a directed acyclic graph (DAG),
        which is connected provided that $\EEE$ is connected.
    \end{itemize}
    
    We call the subgraphs $(I_p \sqcup J_p, \EEE \cap (I_p \times J_p))$ of $(\{1 \dots m \} \sqcup \{1 \dots n \}, \EEE)$ the \emph{Dulmage-Mendelsohn (DM) components of $(\mu, \nu, \EEE)$}.
    
    We call the graph $(\{1 \dots P\}, \{ (p,q);~ p \to q \} )$ the \emph{DM interaction DAG of $(\mu, \nu, \EEE)$}.
    
    We call the diameter of this DAG, i.e., the maximal length of a path, the \emph{DM diameter of $(\mu, \nu, \EEE)$}.
    %
\end{definition}

The result to be proved in this appendix is the following.
As mentioned in the main text, this is an adapted and refined version of \cite[Lemma~3.3]{allen2017much}.

\begin{proposition*}[\autoref{prop:fastasymp:good_approx_minimizer}, restated]
    Suppose that $A$ is asymptotically scalable.
    Then for any $0 < \eps \leq \tau\, e^{\tau^{-1} (K-\ul\theta) (1+2/\Delta)}$, there exists $\hat g_\eps \in \RR^m$ such that
    \begin{equation}
        \Psi(f[\hat g_\eps], \hat g_\eps) - \inf \Psi \leq \eps
        \qquad \text{and} \qquad 
        \norm{\hat g_\eps}_{\var}
        \leq
        \frac{\tau \ell}{2} \log (\tau/\eps)
        + (K-\ul\theta)
        \left( \frac{\ell}{2} + \frac{1 + \ell}{\Delta} \right)
    \end{equation}
    where $\ell$ is the DM diameter of $(\mu, \nu, \EEE)$ and
    $K, \Delta, \ul\theta$ are the quantities defined in \eqref{eq:warmup:def_K_theta}, \eqref{eq:warmup:def_Delta}, \eqref{eq:warmup:def_ultheta}.
\end{proposition*}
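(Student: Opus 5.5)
We build $\hat g_\eps$ blockwise from the Dulmage--Mendelsohn decomposition of \autoref{prop:fastasymp:DM_decomp}: on each component $p$ we use a good exact minimizer of the restricted problem, then shift the blocks by multiples of a large parameter $L$ along the DAG so that all cross-component mass becomes exponentially small. Let $h(p)$ denote the length of the longest directed path in the DM interaction DAG ending at $p$, so that $0 \le h(p) \le \ell$ and $p \to q$ implies $h(q) \ge h(p)+1$. For each $p$, \autoref{prop:warmup:good_minimizer} applied to $(\mu|_{I_p}/\mu(I_p), \nu|_{J_p}/\nu(J_p), C^{(p)})$ gives a normalized minimizer $(f^{*(p)}, g^{*(p)})$. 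We then set
\begin{equation}
    \hat f_i = f^{*(p)}_i - L\, h(p) + c_p \quad (i\in I_p), \qquad \hat g_j = g^{*(p)}_j + L\, h(p) - c_p \quad (j \in J_p),
\end{equation}
with constants $c_p$ chosen so that diagonal blocks contribute exactly mass $\mu(I_p)$. For $(i,j)\in\EEE$ with $i\in I_p$, $j\in J_q$ and $p\to q$, the exponent changes by $L(h(q)-h(p)) \cdot(-1)$ in the sense that $\hat f_i + \hat g_j$ is lowered by at least $L$ relative to the within-block potentials. Careful: the orientation of $\to$ (from rows of $p$ to columns of $q$) must match the sign so that off-diagonal edges are suppressed. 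If the sign comes out wrong, we reverse it by using $\ell - h$ instead. Since $\EEE$ only contains edges inside blocks or along $p\to q$, the plan $\pi[\hat f,\hat g]$ equals the block-diagonal scaled plan, which has exact marginals, plus off-diagonal mass of order $e^{-L/\tau}\cdot e^{(K-\ul\theta)O(1)/\tau}$.

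The next step bounds the suboptimality. Since $f[\hat g]$ minimizes $\Psi(\cdot,\hat g)$, it suffices to bound $\Psi(\hat f,\hat g) - \inf\Psi$. A lower bound on $\inf \Psi$ comes from weak duality: $\inf\Psi$ equals the EOT value, which is at least the value of the block-diagonal problem, because feasible plans must be supported on diagonal blocks. One justifies this by a mass-counting argument, using $\mu(I_p)=\nu(J_p)$ and acyclicity. That block-diagonal value is $-\mu^\top \hat f - \nu^\top\hat g$, since the shifts cancel in the sum because $\mu(I_p)=\nu(J_p)$. On the other side, $\Psi(\hat f,\hat g) = \tau\log(1+\text{offdiag mass}) - \mu^\top\hat f-\nu^\top\hat g$. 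Hence the gap is at most $\tau \cdot(\text{off-diagonal mass}) \le \tau\, e^{-(L - c(K-\ul\theta)(1+2/\Delta))/\tau}$, using the sup-norm bounds $\|f^{*(p)}\|_\infty,\|g^{*(p)}\|_\infty\le K/2+(K-\ul\theta)/\Delta$ from \autoref{prop:warmup:good_minimizer}, with $\theta\ge\ul\theta$. We choose $L = \tau\log(\tau/\eps) + (K-\ul\theta)(1+2/\Delta)$, up to the exact constant; the hypothesis on $\eps$ ensures $L\ge 0$.

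Finally, $\|\hat g\|_{\var}$ is half the spread. The shifts $Lh(p)$ range over $[0,\ell L]$, giving $\ell L/2$. The within-block terms add $\|g^{*(p)}\|_{\var}$-type contributions plus cross-block offsets, bounded by $(K-\ul\theta)/\Delta$ per block and $(K-\ul\theta)/2$ per level from the normalization constants $c_p$. Combining these yields the bound $\frac{\tau\ell}{2}\log(\tau/\eps) + (K-\ul\theta)(\frac{\ell}{2}+\frac{1+\ell}{\Delta})$.

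The main obstacle is the constant bookkeeping. The quantities $K$ and $\Delta$ for the restricted problems must be compared with the global ones: $\Delta_p \ge \Delta$ holds after renormalizing masses only up to the factor $\mu(I_p)$, so it may be better to avoid renormalizing and rerun \autoref{lm:warmup:kalantari08_thm51} directly with unnormalized block marginals. The second delicate point is verifying the lower bound on $\inf\Psi$ by the block-diagonal value.
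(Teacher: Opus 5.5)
The overall strategy is the paper's, but the proposal has a genuine gap: the constant bookkeeping, which carries all the content of the stated bound, is left unresolved and is partly wrong as written.

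\textbf{What matches the paper.} You use per-block normalized minimizers from \autoref{prop:warmup:good_minimizer} and level shifts along the DM DAG with the same step $\rho=\tau\log(\tau/\eps)+(K-\ul\theta)(1+2/\Delta)$. You bound the suboptimality through the block-diagonal coupling $\pi^*$; your weak-duality phrasing is the paper's Jensen argument (\autoref{lm:apx_delayedpf_GAM:ub_Psi-infPsi}) in primal form. The ``delicate point'' you flag there is not one, and no mass counting is needed. Plugging the single feasible plan $\pi^*$ into the Gibbs variational formula gives $\inf\Psi\ge-\langle C,\pi^*\rangle-\tau\Hdiv{\pi^*}{\mu\otimes\nu}=-\mu^\top\hat f-\nu^\top\hat g$ directly. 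Your formula does have the wrong sign. For $p\to q$ it gives $\hat f_i+\hat g_j=f^{*(p)}_i+g^{*(q)}_j+L(h(q)-h(p))+c_p-c_q$, which raises the cross-block exponents. You need $+Lh(p)$ on $\hat f$ and $-Lh(p)$ on $\hat g$, as in the paper's $t_p=\rho\,\ell_p$.

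\textbf{The normalization step fails as written.} The constant $c_p$ enters $\hat f$ and $\hat g$ with opposite signs, so it cancels on every diagonal block and cannot adjust block masses. With renormalized marginals, block $p$ has row sums $\mu(I_p)\mu_i$. So your claim that the diagonal part has exact marginals fails, and both the formula $\Psi(\hat f,\hat g)=\tau\log(1+\text{off-diagonal mass})-\mu^\top\hat f-\nu^\top\hat g$ and the lower bound on $\inf\Psi$ rest on it. You need a same-sign shift of both potentials by $\frac{\tau}{2}\log(1/\mu(I_p))$, or unnormalized block marginals. Drop $c_p$ altogether, since it also perturbs the off-diagonal exponents. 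Renormalizing does not hurt $\Delta$, contrary to your worry: it becomes $\Delta_p/\mu(I_p)\ge\Delta$.

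\textbf{The variation count does not give the claimed bound.} The term $\ell L/2$ alone already equals $\frac{\tau\ell}{2}\log(\tau/\eps)+(K-\ul\theta)(\frac{\ell}{2}+\frac{\ell}{\Delta})$. The remaining budget is therefore exactly $(K-\ul\theta)/\Delta$, and any further per-level term overshoots. What is needed is that all blocks share one centre,
\[ \abs{f^{*(p)}_i-K/2},\ \abs{g^{*(p)}_j-K/2}\le (K-\ul\theta)/\Delta \quad\text{for all } p,i,j, \]
which gives $\max_j\hat g_j-\min_j\hat g_j\le 2(K-\ul\theta)/\Delta+\ell L$. Per-block bounds on $\norm{g^{*(p)}}_{\var}$ do not imply this. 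After the mass correction, the centre of block $p$ is $K_p/2$ with $K_p=\max_{\EEE\cap(I_p\times J_p)}C-\tau\log(\min_{I_p}\mu\vee\min_{J_p}\nu)$, which varies with $p$.

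\textbf{How to close the gap.} Run \autoref{lm:warmup:kalantari08_lm5152} and \autoref{lm:warmup:kalantari08_thm51} on each unnormalized block, with one coordinate pinned at $K/2$.
\begin{itemize}
\item Use the global $K$ in the constraints $f_i+g_j\le K$. This is legitimate, since exact block marginals give $f_i+g_j\le C_{ij}-\tau\log\nu_j\le K$.
\item The lower constraint becomes $\mu^\top f+\nu^\top g\ge\mu(I_p)\,\theta'_p$, where $\theta'_p=-\tau\log\big(\mu(I_p)^{-1}\sum_{I_p\times J_p}e^{-C_{ij}/\tau}\mu_i\nu_j\big)$ satisfies $\ul\theta\le\theta'_p\le K$.
\item Case~2 then yields $\abs{t}\le\mu(I_p)(K-\ul\theta)/\Delta\le(K-\ul\theta)/\Delta$.
\end{itemize}
This common centring is also what the paper's one-line estimate $\norm{\hat g_\eps}_{\var}\le\max_p\norm{g^{*p}}_{\var}+\frac12(\max_p t_p-\min_p t_p)$ tacitly uses.
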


The idea of the proof is to consider good minimizers $(f^{*p}, g^{*p})$ for the exactly-scalable diagonal blocks (\autoref{lm:apx_delayedpf_GAM:diag_blocks}) and to construct $(\hat f, \hat g)$ in the form
$\forall p,
\forall i \in I_p, \hat f_i = f^{*p}_i + t_p$
and
$\forall j \in J_p, \hat g_j = g^{*p}_j - t_p$,
for some offsets $t_p \in \RR$ to be chosen.
This structure allows to easily bound 
$\Psi(\hat f, \hat g) - \inf \Psi$ in terms of the $t_p$
(\autoref{lm:apx_delayedpf_GAM:ub_Psi-infPsi}, \autoref{lm:apx_delayedpf_GAM:tp_tq}),
and it will only remain to choose them appropriately.

\begin{lemma} \label{lm:apx_delayedpf_GAM:diag_blocks}
    For each $p \leq P$, there exists a minimizer $(f^{*p}, g^{*p}) \in \RR^{I_p} \times \RR^{J_p}$ of
    \begin{equation}
        \Psi^p(f^p, g^p)
        = \tau \log \sum_{i \in I_p, j \in J_p} e^{[-C_{ij}+f^p_i+g^p_j]/\tau} \mu_i \nu_j
        - \sum_{i \in I_p} \mu_i f^p_i
        - \sum_{j \in J_p} \nu_j g^p_j
    \end{equation}
    with
    \begin{equation}
        \forall i \in I_p,~~
        \sum_{j \in J_p} e^{[-C_{ij} + f^{*p}_i + g^{*p}_j]/\tau} \mu_i \nu_j = \mu_i \\
        \qquad \text{and} \qquad
        \forall j \in J_p,~~
        \sum_{i \in I_p} e^{[-C_{ij} + f^{*p}_i + g^{*p}_j]/\tau} \mu_i \nu_j = \nu_j
    \end{equation}
    and such that
    \begin{equation}
        \norm{f^{*p}}_{\var}, 
        \norm{g^{*p}}_{\var}
        \leq \frac{K - \ul\theta}{\Delta}
        \qquad\text{and}\qquad
        \norm{f^{*p}}_\infty, 
        \norm{g^{*p}}_\infty
        \leq \frac{K}{2} + \frac{K - \ul\theta}{\Delta}.
    \end{equation}
\end{lemma}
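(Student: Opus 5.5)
The plan is to apply \autoref{prop:warmup:good_minimizer} to each diagonal block $A^{(p)}$ separately, after renormalizing the marginals to probability vectors, and then translate its constants back to the original problem.

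\textbf{Setup on the block.} Fix $p$ and set $w_p = \sum_{i \in I_p} \mu_i = \sum_{j \in J_p} \nu_j > 0$. Define the probability vectors $\tilde\mu = \restr{\mu}{I_p}/w_p$ and $\tilde\nu = \restr{\nu}{J_p}/w_p$. By \autoref{prop:fastasymp:DM_decomp}, the block $A^{(p)}$ is exactly $(\restr{\mu}{I_p}, \restr{\nu}{J_p})$-scalable, hence exactly $(\tilde\mu,\tilde\nu)$-scalable. Its edge set is $\EEE \cap (I_p \times J_p)$; if this bipartite graph is not connected, I will first split into connected components. Consider the block dual
\begin{equation}
    \tilde\Psi(f,g) = \tau\log\sum e^{[-C_{ij}+f_i+g_j]/\tau}\tilde\mu_i\tilde\nu_j - \tilde\mu^\top f - \tilde\nu^\top g .
\end{equation}
We have $\Psi^p(f,g) = w_p \tilde\Psi(f,g) + \text{const}$, so the two functions have the same minimizers.

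\textbf{Normalization.} A normalized minimizer of $\tilde\Psi$ has block marginals $\tilde\mu,\tilde\nu$ for the plan $e^{[-C+f+g]/\tau}\tilde\mu_i\tilde\nu_j$. The required identities $\sum_j e^{[\cdots]/\tau}\mu_i\nu_j = \mu_i$ instead need the plan to have marginals $\restr{\mu}{I_p}$ and $\restr{\nu}{J_p}$. Since $\mu_i\nu_j = w_p^2\tilde\mu_i\tilde\nu_j$, this amounts to shifting $f$ by $-\tau\log w_p$, for example, which keeps the pair a minimizer of $\Psi^p$. This shift is harmless for the variation seminorm. For the sup-norm, I will instead redistribute the shift using the freedom $(f+b\bmone, g-b'\bmone)$ with $b+b'$ fixed, and re-run the centring argument of \autoref{lm:warmup:kalantari08_lm5152} directly for the block.

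\textbf{Constants.} In \autoref{prop:warmup:good_minimizer}, the quantities $\tilde K, \tilde\theta, \tilde\Delta$ are computed for $(\tilde\mu,\tilde\nu)$. The cleanest route is to redo the polytope argument of \autoref{lm:warmup:kalantari08_lm5152} and \autoref{lm:warmup:kalantari08_thm51} directly with the unnormalized weights $\restr{\mu}{I_p}, \restr{\nu}{J_p}$ and normalization $\sum = w_p$. The constraints become:
\begin{itemize}
    \item $f_i + g_j \le C_{ij} - \tau\log\nu_j \le K$ on the block edges. Here we use $\nu_j \ge \nu_{\min}$ for the original marginals, which is exactly why $K$ is defined with $\mu_{\min}, \nu_{\min}$.
    \item $\sum_{I_p}\mu_i f_i + \sum_{J_p}\nu_j g_j \ge w_p\,\theta'$, from comparing with $\Psi^p(0,0)$. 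Here $\theta' \ge \ul\theta$, with an additive $\tau\log w_p$ correction that I must absorb.
    \item $f_1 = K/2$.
\end{itemize}
The vertex analysis then gives $\theta$-type equality in the form $w_p(K - \text{lower}) = (\sum_{I_2}\mu - \sum_{J_2}\nu)\,t$, where $I_2 \subset I_p$ and $J_2 \subset J_p$ are subsets of the original index sets. Their imbalance is therefore at least the global $\Delta$, using original, not renormalized, weights. Because $w_p \le 1$, this yields $|t| \le (K - \ul\theta)/\Delta$.

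\textbf{Main obstacle.} The hard part is the lower bound on $\sum\mu f + \sum\nu g$. I need $-\Psi^p(0,0)/w_p \ge \ul\theta$, up to terms that do not spoil the bound. A direct computation gives $\tau\log\sum_{\text{block}} e^{-C/\tau}\mu\nu \le -\ul\theta + \tau\log w_p^2$. The normalized minimizer then satisfies roughly $\sum\mu f + \sum\nu g \ge w_p\ul\theta - \tau w_p\log w_p$, and the extra term is nonnegative, which is the right direction. This is the step I will check carefully: the inequality should go the right way precisely because $w_p \le 1$. Replacing $\theta$ by $\ul\theta$ is what makes the bound uniform across blocks.
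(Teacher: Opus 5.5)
Your plan follows the paper's route: apply the Kalantari-type polytope bound of \autoref{prop:warmup:good_minimizer} on each DM block, bound the block $\theta$ below by $\ul\theta$, and use that the global $K$ and $\Delta$ dominate their block counterparts. The paper does this as a one-line black-box application to $(\restr{\mu}{I_p},\restr{\nu}{J_p},C^{(p)})$ and never addresses that these weights have total mass $w_p<1$. Your bookkeeping is correct and more careful than the paper's. This covers the shift of $f_i+g_j$ by $-\tau\log w_p$ to get the stated marginal identities, and the factor $w_p$ in the $\theta$-constraint. It also covers measuring the imbalance of $I_2,J_2$ with the original weights, and splitting a disconnected block; the splitting is harmless because all bounds are centred at the common value $K/2$.

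One step is false as written, however. $\Psi^p$ is not $w_p\tilde\Psi$ plus a constant: the difference is $2\tau\log w_p+(1-w_p)\,\tau\log\sum_{I_p\times J_p}e^{[-C_{ij}+f_i+g_j]/\tau}\tilde\mu_i\tilde\nu_j$. In fact $\Psi^p(f+b\bmone,g)=\Psi^p(f,g)+(1-w_p)b$. So for $w_p<1$, $\Psi^p$ is unbounded below and has no minimizer at all. This defect is already in the statement; downstream only the marginal identities and the norm bounds are used.

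Consequently, ``comparing with $\Psi^p(0,0)$'' is not legitimate. Take a one-edge block with $C_{ij}=0$ and $\mu_i=\nu_j=\tfrac12$. The pair satisfying the identities has $f_i+g_j=\tau\log 2$, so $\Psi^p=-\tfrac32\tau\log 2>-2\tau\log 2=\Psi^p(0,0)$.

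The fix is to compare with $\tilde\Psi$ directly. A pair satisfying the unnormalized identities has $\tilde Z=1/w_p$ and zero $\tilde\Psi$-gradient, hence minimizes $\tilde\Psi$. Then $\tilde\Psi(f,g)\le\tilde\Psi(0,0)=-\tilde\theta$ gives
\[
\textstyle\sum_{I_p}\mu_i f_i+\sum_{J_p}\nu_j g_j\;\ge\; w_p\bigl(\tilde\theta-\tau\log w_p\bigr)\;\ge\; w_p\,\ul\theta-\tau w_p\log w_p ,
\]
using $\tilde\theta\ge\min_{I_p\times J_p}C\ge\ul\theta$. This is exactly the bound you wrote down. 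With it, your vertex computation gives $|t|\le w_p(K-\ul\theta+\tau\log w_p)/\Delta\le(K-\ul\theta)/\Delta$. The degenerate case where the $\theta$-constraint holds with $w_pK$ equal to the lower bound is handled by the paper's limiting argument.

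Two minor slips. With $(f+b\bmone,g-b'\bmone)$, it is $b-b'$, not $b+b'$, that must be held fixed at $-\tau\log w_p$. And the pin $f_1=K/2$ should be placed at an index of $I_p$.
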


\begin{proof}
    The lemma follows from applying \autoref{prop:warmup:good_minimizer}
    with $\mu, \nu$, and $C$ replaced respectively by 
    $\restr{\mu}{I_p}$,
    $\restr{\nu}{J_p}$, and
    $(C_{ij})_{i \in I_p, j \in J_p}$.
    For the normalization conditions, we use the fact that the minimizer exhibited by \autoref{prop:warmup:good_minimizer} is a normalized minimizer.
    For the estimates on 
    $\norm{f^{*p}}_{\var}, 
    \norm{g^{*p}}_{\var}, 
    \norm{f^{*p}}_\infty, 
    \norm{g^{*p}}_\infty$,
    we use \autoref{rk:warmup:ultheta} to lower-bound the ``$\theta$'' by $\ul\theta$ uniformly in~$p$,
    and we note that the ``$K$'' and the ``$1/\Delta$'' only decrease upon restricting to $I_p, J_p$.
\end{proof}

\begin{lemma} \label{lm:apx_delayedpf_GAM:ub_Psi-infPsi}
    For any $(\hat f, \hat g) \in \RR^m \times \RR^n$, if there exists $L>0$ and $\pi^* \in \Delta_{m \times n}$ such that
    \begin{equation}
        \begin{cases}
            X_\sharp \pi^* = \mu \\
            Y_\sharp \pi^* = \nu
        \end{cases}
        \quad \text{and} \qquad
        \forall i,j,~~
        \pi[\hat f, \hat g]_{ij} \geq \frac{1}{1+L} \pi^*_{ij}
    \end{equation}
    where we recall that
    $\pi[\hat f, \hat g]_{ij} = e^{[-C_{ij}+\hat f_i + \hat g_j]/\tau} \mu_i \nu_j / \hZ$ and
    $\hZ = \sum_{i'j'} e^{[-C_{i'j'}+\hat f_{i'} + \hat g_{j'}]/\tau} \mu_{i'} \nu_{j'}$,
    then
    \begin{equation}
        \Psi(\hat f, \hat g) - \inf \Psi
        \leq \tau \log(1+L) 
        \leq \tau L.
    \end{equation}
\end{lemma}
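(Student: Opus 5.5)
Since $\Psi(\hat f, \hat g) = \tau \log \hZ - \mu^\top \hat f - \nu^\top \hat g$, the plan is to bound $\Psi(\hat f,\hat g)$ from above and $\inf \Psi$ from below by quantities involving $\pi^*$, and compare. The natural tool is the duality between $\Psi$ and the EOT problem, used only in its weak form, which holds without exact scalability.

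\textbf{Lower bound on $\inf \Psi$ (weak duality).} Fix any feasible $\pi^*$ with $X_\sharp \pi^* = \mu$ and $Y_\sharp \pi^* = \nu$. Note that $\pi^*$ must be supported on $\EEE$, since $\pi[\hat f,\hat g]$ vanishes off $\EEE$ and dominates $\pi^*/(1+L)$. For any $(f,g)$, Jensen's inequality (concavity of $\log$) over the support of $\pi^*$ gives
\begin{equation}
    \log \sum_{ij} e^{[-C_{ij}+f_i+g_j]/\tau} \mu_i \nu_j
    \geq \sum_{ij} \pi^*_{ij} \log \frac{e^{[-C_{ij}+f_i+g_j]/\tau} \mu_i \nu_j}{\pi^*_{ij}}.
\end{equation}
Multiplying by $\tau$ and using the marginal constraints to cancel the linear terms, we get
\begin{equation}
    \Psi(f,g) \geq -\sum_{ij} \pi^*_{ij} C_{ij} - \tau \Hdiv{\pi^*}{\mu\otimes\nu} =: -W(\pi^*).
\end{equation}
Hence $\inf \Psi \geq -W(\pi^*)$.

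\textbf{Exact expression for $\Psi(\hat f,\hat g)$.} Write $\hat\pi = \pi[\hat f,\hat g]$. On $\EEE$ we have $\log \hat\pi_{ij} = [-C_{ij}+\hat f_i+\hat g_j]/\tau + \log(\mu_i\nu_j) - \log\hZ$. Integrating this identity against $\pi^*$ and again using the marginal constraints yields
\begin{equation}
    \Psi(\hat f,\hat g) = \tau\log\hZ - \mu^\top\hat f - \nu^\top\hat g
    = -\sum_{ij}\pi^*_{ij}C_{ij} - \tau\sum_{ij}\pi^*_{ij}\log\frac{\hat\pi_{ij}}{\mu_i\nu_j}.
\end{equation}
Subtracting $-W(\pi^*)$ then gives
\begin{equation}
    \Psi(\hat f,\hat g) - \inf \Psi \leq \Psi(\hat f,\hat g) + W(\pi^*) = \tau \sum_{ij} \pi^*_{ij} \log \frac{\pi^*_{ij}}{\hat\pi_{ij}} = \tau\Hdiv{\pi^*}{\hat\pi}.
\end{equation}

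\textbf{Conclusion.} The hypothesis $\hat\pi_{ij} \geq \pi^*_{ij}/(1+L)$ gives $\log(\pi^*_{ij}/\hat\pi_{ij}) \leq \log(1+L)$ wherever $\pi^*_{ij}>0$. Since $\pi^*$ sums to one, $\Hdiv{\pi^*}{\hat\pi} \leq \log(1+L)$. The final bound $\tau \log(1+L) \leq \tau L$ is elementary.

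The only delicate point is handling zero entries and $C_{ij}=\infty$: all sums must be restricted to the support of $\pi^*$, which is contained in $\EEE$. The key identity rewriting $\Psi(\hat f,\hat g)$ in terms of $\pi^*$ is the step to verify carefully.
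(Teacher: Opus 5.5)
Your proof is correct, and it takes a somewhat different route from the paper's. You work through the primal EOT objective. Weak duality gives $\inf\Psi \geq -\sum_{ij}\pi^*_{ij}C_{ij} - \tau\Hdiv{\pi^*}{\mu\otimes\nu}$, and your exact identity shows that the resulting primal--dual gap is precisely $\tau\Hdiv{\pi^*}{\hat\pi}$. You bound that relative entropy by $\log(1+L)$ only at the very end, using the pointwise domination.

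The paper instead stays entirely on the dual side. For arbitrary perturbations $(\alpha,\beta)$ it writes $\Psi(\hat f+\alpha,\hat g+\beta)-\Psi(\hat f,\hat g) = \tau\log\sum_{ij}\hat\pi_{ij}e^{[\alpha_i+\beta_j]/\tau} - \mu^\top\alpha-\nu^\top\beta$. It then uses $\hat\pi\geq\pi^*/(1+L)$ inside the logarithm to extract $-\tau\log(1+L)$. It rewrites $\mu^\top\alpha+\nu^\top\beta=\sum_{ij}\pi^*_{ij}(\alpha_i+\beta_j)$ via the marginal constraints and shows the remainder is nonnegative by Jensen's inequality against $\pi^*$. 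Taking the supremum over $(\alpha,\beta)$ concludes.

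Both arguments rest on the same Jensen inequality against $\pi^*$ combined with its marginal constraints. The paper's version is shorter. It needs no bookkeeping about $C_{ij}=\infty$, the support of $\pi^*$, or finiteness of the primal objective, since $C$ enters only through $\hat\pi$. Yours yields the sharper intermediate bound $\Psi(\hat f,\hat g)-\inf\Psi\leq\tau\Hdiv{\pi^*}{\hat\pi}$. This makes clear that the pointwise ratio hypothesis serves only to control this relative entropy, and any other bound on it would do.
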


\begin{proof}
    Denote for concision 
    $\hat\pi = \pi[\hat f, \hat g]$.
    For any $\alpha \in \RR^m, \beta \in \RR^n$,
    by definition of
    $\Psi(f, g) = \tau \log \sum_{ij} e^{[-C_{ij}+\hat f_i + \hat g_j]/\tau} \mu_i \nu_j 
    -\mu^\top f - \nu^\top g$,
    \begin{align}
        \Psi(\hat f+\alpha, \hat g+\beta) - \Psi(\hat f, \hat g)
        &= \tau \log \sum_{ij} \hat\pi_{ij} e^{[\alpha_i + \beta_j]/\tau}
        - \mu^\top \alpha - \nu^\top \beta \\
        &\geq -\tau \log(1+L)
        + \tau \log \sum_{ij} \pi^*_{ij} e^{[\alpha_i + \beta_j]/\tau}
        - \sum_{ij} \pi^*_{ij} (\alpha_i+\beta_j) \\
        &\geq -\tau \log(1+L)
    \end{align}
    by concavity of $\log$ and Jensen's inequality.
    By rearranging the inequality and taking a supremum over $\alpha, \beta$, we obtain 
    $\Psi(\hat f, \hat g) - \inf \Psi
    \leq \tau \log(1+L) \leq \tau L$.
\end{proof}

\begin{lemma} \label{lm:apx_delayedpf_GAM:tp_tq}
    Let $(f^{*p}, g^{*p}) \in \RR^{I_p} \times \RR^{J_p}$ for $p \leq P$ be defined as in \autoref{lm:apx_delayedpf_GAM:diag_blocks}.
    Let any $t_1, ..., t_P \in \RR$ and define $(\hat f, \hat g) \in \RR^m \times \RR^n$ by
    \begin{equation}
        \forall p,~~
        \forall i \in I_p,~ \hat f_i = f^{*p}_i + t_p
        \qquad \text{and} \qquad
        \forall j \in J_p,~ \hat g_j = g^{*p}_j - t_p.
     \end{equation}
     Then
     \begin{equation}
        \Psi(\hat f, \hat g) - \inf \Psi
        \leq \tau\, 
        \exp\left( \tau^{-1} \, \max_{\substack{p, q \\ p \to q}}\, t_p-t_q \right)\,
        e^{\tau^{-1} (K - \ul\theta)(1+2/\Delta)}.
     \end{equation}
\end{lemma}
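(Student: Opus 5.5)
The plan is to apply \autoref{lm:apx_delayedpf_GAM:ub_Psi-infPsi}. The reference coupling $\pi^*$ will be the block-diagonal plan built from the exact minimizers of the diagonal blocks. The only real work is to bound the normalizing constant $\hZ$ of $\pi[\hat f, \hat g]$.

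\textbf{Step 1: the reference coupling.} Set $\pi^*_{ij} = e^{[-C_{ij}+f^{*p}_i+g^{*p}_j]/\tau}\mu_i\nu_j$ when $i\in I_p$ and $j \in J_p$ for some $p$, and $\pi^*_{ij}=0$ otherwise. By the normalization identities of \autoref{lm:apx_delayedpf_GAM:diag_blocks}, we get $\sum_{j} \pi^*_{ij} = \mu_i$ for $i\in I_p$ and $\sum_i \pi^*_{ij} = \nu_j$ for $j \in J_p$. Since the $I_p$ and the $J_p$ partition the index sets, $X_\sharp\pi^*=\mu$ and $Y_\sharp\pi^*=\nu$, and in particular $\sum_{ij}\pi^*_{ij}=1$.

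\textbf{Step 2: comparing $\pi[\hat f,\hat g]$ with $\pi^*$.} On a diagonal block $I_p\times J_p$ the offsets cancel, since $\hat f_i+\hat g_j = f^{*p}_i + g^{*p}_j + t_p - t_p$. Hence $\pi[\hat f,\hat g]_{ij} = \pi^*_{ij}/\hZ$ there. Off the diagonal blocks, $\pi^*_{ij}=0 \le \pi[\hat f,\hat g]_{ij}$. So the hypothesis of \autoref{lm:apx_delayedpf_GAM:ub_Psi-infPsi} holds with $1+L = \hZ$, or with $L=0$ if $\hZ\le 1$. It therefore suffices to show
\begin{equation}
    \hZ \leq 1 + \exp\left( \tau^{-1} \max_{p\to q} (t_p-t_q) \right) e^{\tau^{-1}(K-\ul\theta)(1+2/\Delta)}.
\end{equation}

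\textbf{Step 3: bounding $\hZ$ (the main computation).} Split $\hZ$ into the diagonal-block part, which equals $\sum_{ij}\pi^*_{ij}=1$, and the off-diagonal part. The off-diagonal part runs over $(i,j)\in\EEE$ with $i\in I_p$, $j\in J_q$ and $p\neq q$; by definition of the relation, such a pair forces $p\to q$. Each such term equals
\begin{equation}
    e^{[-C_{ij} + f^{*p}_i + g^{*q}_j + t_p - t_q]/\tau}\mu_i\nu_j .
\end{equation}
We bound the exponent term by term:
\begin{itemize}
    \item $-C_{ij} \le -\ul\theta$;
    \item by the sup-norm bounds of \autoref{lm:apx_delayedpf_GAM:diag_blocks}, $f^{*p}_i + g^{*q}_j \le K + 2(K-\ul\theta)/\Delta$;
    \item $t_p - t_q \le \max_{p\to q}(t_p-t_q)$.
\end{itemize}
Adding these, the exponent is at most $\tau^{-1}\max_{p\to q}(t_p-t_q) + \tau^{-1}(K-\ul\theta)(1+2/\Delta)$. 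Finally, $\sum \mu_i\nu_j\le 1$ over the off-diagonal pairs, which gives the bound on $\hZ$ from Step 2.

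\textbf{Step 4: conclusion.} \autoref{lm:apx_delayedpf_GAM:ub_Psi-infPsi} gives $\Psi(\hat f,\hat g)-\inf\Psi \le \tau L$, which is the claimed bound. If there are no pairs with $p\to q$, then $L=0$ and the bound holds trivially.

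The only delicate point is Step 3. It must use the sup-norm bounds rather than the variation bounds, because $f^{*p}$ and $g^{*q}$ come from different blocks and their additive normalizations are not coupled. This is exactly why \autoref{prop:warmup:good_minimizer} fixes $f_1 = K/2$ and controls $\norm{\cdot}_\infty$.
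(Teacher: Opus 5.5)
Your proposal is correct and follows essentially the same route as the paper's proof. It uses the block-diagonal reference coupling $\pi^*$, the comparison $\pi[\hat f,\hat g]_{ij} \geq \hZ^{-1}\pi^*_{ij}$ fed into \autoref{lm:apx_delayedpf_GAM:ub_Psi-infPsi} with $L = \hZ - 1$, and it bounds the cross-block terms of $\hZ$ through the sup-norm estimates of \autoref{lm:apx_delayedpf_GAM:diag_blocks} together with $\sum \mu_i \nu_j \leq 1$ (note that $\hZ \geq 1$ automatically, since the diagonal part already sums to $1$, so your case $\hZ \leq 1$ only covers the degenerate situation $\hZ = 1$).
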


\begin{proof}
    Define a ``block-diagonal'' reference coupling $\pi^*$ by
    \begin{equation}
        \forall p \leq P,~
        \forall i \in I_p, j \in J_p,~~
        \pi^*_{ij} = e^{[-C_{ij} + f^{*p}_i + g^{*p}_j]/\tau} \mu_i \nu_j
    \end{equation}
    and $\pi^*_{ij} = 0$ for all $i \in I_p, j \in J_q$ with $p \neq q$.
    Then for all $p \leq P$, by definition of the $f^{*p}, g^{*p}$,
    \begin{equation}
        \forall i \in I_p,~~
        \sum_j \pi^*_{ij} = \sum_{j \in J_p} \pi^*_{ij} = \mu_i
        \qquad\text{and}\qquad
        \forall j \in J_p,~~
        \sum_i \pi^*_{ij} = \sum_{i \in I_p} \pi^*_{ij} = \nu_j.
    \end{equation}
    Hence $X_\sharp \pi^* = \mu$ and $Y_\sharp \pi^* = \nu$. 
    
    Moreover, denote for concision
    $\hZ = \sum_{ij} e^{[-C_{ij}+\hat f_{i} + \hat g_{j}]/\tau} \mu_{i} \nu_{j}$.
    Then for any $i \leq m, j \leq n$, say $i \in I_p$ and $j \in J_q$,
    \begin{itemize}
        \item if $p=q$ then 
        $\pi[\hat f, \hat g]_{ij} = \hZ^{-1} e^{[-C_{ij}+\hat f_i + \hat g_j]/\tau} \mu_i \nu_j
        = \hZ^{-1} e^{[-C_{ij}+f^{*p}_i + g^{*p}_j]/\tau} \mu_i \nu_j
        = \hZ^{-1} \pi^*_{ij}$,
        and
        \item if $p \neq q$ then 
        $\pi[\hat f, \hat g]_{ij} \geq 0 = \pi^*_{ij}$.
    \end{itemize}
    Hence $\forall i, j,~ \hat\pi_{ij} \geq \hZ^{-1} \pi^*_{ij}$.
    Explicitly, by distinguishing edges within and across DM components,
    \begin{align}
        \hZ
        &= \sum_{ij} e^{[-C_{ij}+\hat f_{i} + \hat g_{j}]/\tau} \mu_{i} \nu_{j} \\
        &= \sum_{p \leq P}
        ~\sum_{i \in I_p, j \in J_p}~
        e^{[-C_{ij} + f^{*p}_i + g^{*p}_j]/\tau} \mu_i \nu_j
        + \sum_{\substack{p, q \leq P \\ p \to q}}
        ~\sum_{i \in I_p, j \in J_q}~
        e^{[-C_{ij} + f^{*p}_i + t_p + g^{*q}_j - t_q]/\tau} \mu_i \nu_j \\
        &= ~~\underbrace{
            \sum_p \sum_{i \in I_p} \mu_i
        }_{1}~~
        + \sum_{\substack{p, q \\ p \to q}}
        e^{(t_p - t_q)/\tau}
        \sum_{i \in I_p, j \in J_q}
        e^{[-C_{ij} + f^{*p}_i + g^{*q}_j]/\tau} \mu_i \nu_j.
    \end{align}
    
    So we can apply \autoref{lm:apx_delayedpf_GAM:ub_Psi-infPsi} with $L = \hZ - 1$,
    yielding
    \begin{equation}
        \Psi(\hat f, \hat g) - \inf \Psi
        \leq \tau\, 
        \sum_{\substack{p, q \\ p \to q}}
        e^{(t_p - t_q)/\tau}
        \sum_{i \in I_p, j \in J_q}
        e^{[-C_{ij} + f^{*p}_i + g^{*q}_j]/\tau} \mu_i \nu_j.
    \end{equation}
    Now by definition of the $f^{*p}, g^{*p}$ and the estimate from \autoref{lm:apx_delayedpf_GAM:diag_blocks},
    \begin{equation}
        \forall i \in I_p, j \in J_q,~~
        -C_{ij} + f^{*p}_i + g^{*q}_j
        \leq -\min_{i'j'} C_{i'j'} + \norm{f^{*p}}_\infty + \norm{g^{*p}}_\infty
        \leq -\ul\theta + K + 2 \frac{K-\ul\theta}{\Delta}.
    \end{equation}
    Substituting this into the previous inequality, we obtain
    \begin{align}
        \Psi(\hat f, \hat g) - \inf \Psi
        &\leq \tau\, 
        \sum_{\substack{p, q \\ p \to q}}
        e^{(t_p - t_q)/\tau} \,
        e^{\tau^{-1} (K - \ul\theta)(1+2/\Delta)}
        \sum_{\substack{i \in I_p, j \in J_q \\ (i,j) \in \EEE}}
        \mu_i \nu_j \\
        &\leq \tau\, 
        \exp\left( \tau^{-1}\, \max_{\substack{p, q \\ p \to q}}\, t_p-t_q \right)\,
        e^{\tau^{-1} (K - \ul\theta)(1+2/\Delta)}
        ~
        \underbrace{
            \sum_{\substack{p, q \\ p \to q}}
            \sum_{\substack{i \in I_p, j \in J_q \\ (i,j) \in \EEE}}
            \mu_i \nu_j
        }_{\leq 1}
    \end{align}
    as announced.
\end{proof}

We can now conclude the proof of \autoref{prop:fastasymp:good_approx_minimizer}.

\begin{proof}[Proof of \autoref{prop:fastasymp:good_approx_minimizer}]
    For each $p \leq P$, let $\ell_p$ denote the maximal length of a path ending at $p$ in the DM interaction DAG. Note that for any edge $p \to q$ in the DAG, it holds 
    $\ell_q \geq \ell_p + 1$.
    
    Let $(f^{*p}, g^{*p}) \in \RR^{I_p} \times \RR^{J_p}$ for $p \leq P$ be defined as in \autoref{lm:apx_delayedpf_GAM:diag_blocks},
    and let
    \begin{equation}
        t_p = \rho \, \ell_p
        \qquad\text{where}\qquad
        \rho = \tau \log(\tau/\eps) + (K-\ul\theta) \left( 1 + \frac{2}{\Delta} \right).
    \end{equation}
    Note that $\rho \geq 0$ provided that
    $\eps \leq \tau\, e^{\tau^{-1} (K-\ul\theta) (1+2/\Delta)}$, which we assumed in the proposition statement.
    In particular,
    \begin{equation}
        \forall p, q \leq P ~\text{s.t.}~ p \to q,~~
        t_p - t_q
        = \rho (\ell_p - \ell_q)
        \leq -\rho
    \end{equation}
    and $\min_p t_p = 0$ and $\max_p t_p = \rho \, \ell$ where we recall that $\ell$ is the DM diameter.
    
    Let $(\hat f_\eps, \hat g_\eps) \in \RR^m \times \RR^n$ be defined from the $f^{*p}, g^{*p}$, and $t_p$ as in \autoref{lm:apx_delayedpf_GAM:tp_tq}. 
    Then
    \begin{align}
        \norm{\hat f_\eps}_{\var}
        &\leq \max_p \norm{f^{*p}}_{\var} + \frac12 ((\max\nolimits_p t_p) - (\min\nolimits_p t_p)) \\
        &\leq \frac{K-\ul\theta}{\Delta} + \frac{\rho\, \ell}{2} 
        = \frac{K-\ul\theta}{\Delta} 
        + \frac{\ell}{2} \left( 
            \tau \log(\tau/\eps) + (K-\ul\theta) \left( 1 + \frac{2}{\Delta} \right) 
        \right) \\
        &= \frac{\tau \ell}{2} \log (\tau/\eps)
        + (K-\ul\theta)
        \left( \frac{\ell}{2} + \frac{1 + \ell}{\Delta} \right)
    \end{align}
    and likewise for $\norm{\hat g_\eps}_{\var}$.
    Moreover,
    \begin{align}
        \Psi(f[\hat g_\eps], \hat g_\eps) - \inf \Psi
        \leq \Psi(\hat f_\eps, \hat g_\eps) - \inf \Psi
        &\leq \tau\, 
        \exp\left( \tau^{-1} \, \max_{\substack{p, q \\ p \to q}}\, t_p-t_q \right)\,
        e^{\tau^{-1} (K - \ul\theta)(1+2/\Delta)} \\
        &\leq \tau ~ e^{-\tau^{-1} \rho} ~ e^{\tau^{-1} (K - \ul\theta)(1+2/\Delta)}
        = \eps,
    \end{align}
    as desired.
\end{proof}

\section{Proof of \autoref{lm:fastasymp:technical_ub_inf}} \label{sec:apx_delayedpf_technical}

\begin{lemma*}[\autoref{lm:fastasymp:technical_ub_inf}, restated]
    Let $b, X>0$ and $M \in \RR$ such that $X < e^{M/b}$. Then for any
    \begin{equation}
        0 < \alpha \leq
        \frac{X}{2 e \, b^2 \, \max\left\{1, \log(e^{M/b}/X) \right\}},
    \end{equation}
    we have
    \begin{equation}
        \inf_{0<x\le X}
        x + \alpha \Big( b \log(1/x) + M \Big)^2
        \leq 4 \alpha b^2
        \left[
            \log\left( \frac{e^{M/b}}{2\alpha b^2} \right)
        \right]^2.
    \end{equation}
\end{lemma*}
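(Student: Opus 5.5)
The plan is to exhibit one explicit admissible point $x_0 \in (0, X]$ and bound the objective there. Substitute $x = e^{-y}$, so that $b\log(1/x) + M = by + M = b(y + M/b)$. Writing $L = \log\big(e^{M/b}/(2\alpha b^2)\big)$, the natural choice is to balance the two terms by taking $x_0 = 2\alpha b^2 L$. This is the stationarity condition $x = 2\alpha b^2(\log(1/x) + M/b)$ evaluated approximately. First check that $L>0$: the assumption on $\alpha$ gives $2\alpha b^2 \leq X/e < e^{M/b}$, hence $L \geq 1 + \log(e^{M/b}/X) > 1$ (using $X<e^{M/b}$). Consequently $x_0>0$.

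Next verify $x_0 \leq X$. Write $u = \log(e^{M/b}/X) > 0$ and $s = X/(2\alpha b^2) \geq e\max\{1,u\}$. Then $L = u + \log s$ and $x_0 = X(u+\log s)/s$, so we need $u + \log s \leq s$. Since $s \geq e$, we have $\log s \leq s/e$. Also $u \leq s/e$, because $s \geq e u$. Therefore $u + \log s \leq 2s/e < s$, which gives admissibility. This step is where the factor $2e$ and the $\max\{1,\cdot\}$ in the hypothesis are used, and it is the only delicate point.

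Finally, evaluate the objective at $x_0$. We have $\log(1/x_0) + M/b = \log\big(e^{M/b}/(2\alpha b^2)\big) - \log L = L - \log L$. Since $L>1$, $0 < L - \log L \leq L$, so the second term satisfies $\alpha b^2 (L-\log L)^2 \leq \alpha b^2 L^2$. The first term is $x_0 = 2\alpha b^2 L \leq 2\alpha b^2 L^2$, again because $L \geq 1$. Summing, the objective at $x_0$ is at most $3\alpha b^2 L^2 \leq 4\alpha b^2 L^2$, which is the claimed bound.

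The main care point is the constraint check $x_0\le X$. The remaining estimates are elementary once $L\ge1$ is established.
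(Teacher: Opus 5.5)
Your proof is correct and follows the paper's route: you use the same test point $x_0 = 2\alpha b^2 L$ and the same identity $\log(1/x_0) + M/b = L - \log L$, followed by an elementary bound, where you even get the constant $3$ in place of $4$. The only difference is the check $x_0 \le X$: your substitution $s = X/(2\alpha b^2)$, $u = \log(e^{M/b}/X)$, giving $u + \log s \le 2s/e$, replaces the paper's monotonicity-in-$\alpha$ argument and its case split on whether $\log(e^{M/b}/X) \le 1$.
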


\begin{proof}
    Let
    \begin{equation}
        x(\alpha) = 2\alpha b^2 \log\left(\frac{e^{M/b}}{2\alpha b^2}\right).
    \end{equation}
    Since $\alpha \leq X/(2eb^2)$ and $X<e^{M/b}$, we have 
    $\log \frac{e^{M/b}}{2\alpha b^2} \geq 1$ and $x(\alpha)>0$.

    Let us show that $x(\alpha) \leq X$.
    One can check that $\alpha \mapsto x(\alpha)$
    is increasing on $(0, \frac{e^{M/b}}{2eb^2}]$.
    So if $\log(e^{M/b}/X) \leq 1$, then
    since $\frac{X}{2eb^2} \leq \frac{e^{M/b}}{2eb^2}$,
    \begin{equation}
        x(\alpha)
        \leq x\left( \frac{X}{2eb^2} \right)
        = \frac{X}{e} \log\left( \frac{e^{M/b}}{X/e} \right)
        = \frac{X}{e} \left( 1+\log(e^{M/b}/X) \right)
        \leq \frac{2X}{e}
        \leq X.
    \end{equation}
    Likewise, if $\log(e^{M/b}/X) > 1$, then
    $\frac{X}{2eb^2 \log(e^{M/b} / X)} \leq \frac{e^{M/b}}{2eb^2}$
    since $\forall u > 1, \frac1u \leq e^u$,
    and so
    \begin{align}
        x(\alpha)
        &\leq
        x\left( \frac{X}{2e b^2 \log(e^{M/b}/X)} \right) \\
        &= \frac{X}{e \log(e^{M/b}/X)}
        \log\left(
            \frac{e\, e^{M/b}\log(e^{M/b}/X)}{X}
        \right) \\
        &= \frac{X}{e \log(e^{M/b}/X)}
        \left[
            1 + \log(e^{M/b}/X) + \log\log(e^{M/b}/X)
        \right]
        \leq X
    \end{align}
    since 
    $\forall u > 1, \frac{1+u+\log u}{e u} \leq 1$.
    
    Therefore $0 < x(\alpha) \leq X$, so the infimum in the lemma statement can be upper-bounded by evaluating it at $x(\alpha)$.
    Now
    \begin{equation}
        M + b\log(1/x(\alpha))
        = M - b \log\left(
            2\alpha b^2
            \log\left(\frac{e^{M/b}}{2\alpha b^2}\right)
        \right)
        = b\left[
            \log \left( \frac{e^{M/b}}{2\alpha b^2} \right)
            - \log\log \left( \frac{e^{M/b}}{2\alpha b^2} \right)
        \right].
    \end{equation}
    Hence
    \begin{align}
        x(\alpha) + \alpha \Big( M + b \log(1/x(\alpha)) \Big)^2
        &= \alpha b^2
        \left\{
            2\log\left(\frac{e^{M/b}}{2\alpha b^2}\right)
            + \left[
                \log\left(\frac{e^{M/b}}{2\alpha b^2}\right)
                -
                \log\log\left(\frac{e^{M/b}}{2\alpha b^2}\right)
            \right]^2
        \right\} \\
        &\leq
        4\alpha b^2
        \left[
            \log\left(\frac{e^{M/b}}{2\alpha b^2}\right)
        \right]^2
    \end{align}
    where for the last inequality we used the fact that
    $\forall u \geq 1,~
    2 u + (u - \log u)^2 \leq 4 u^2$
    applied to
    ${u = \log \frac{e^{M/b}}{2\alpha b^2} \geq 1}$.
\end{proof}

\begin{remark}
    The bound in \autoref{lm:fastasymp:technical_ub_inf} is asymptotically tight up to a constant factor for small $\alpha$.
\end{remark}

\fi

\end{document}